\documentclass[11pt]{article}
\usepackage{color}
\usepackage{amssymb,amsfonts,amsthm, mathtools, enumitem}
\usepackage{todonotes}
\usepackage[english]{babel}
\usepackage[utf8]{inputenc}
\usepackage[margin=24mm]{geometry}
\usepackage{graphicx,overpic}
\usepackage{times,fourier}
\usepackage{tikz}
\usetikzlibrary{shapes, calc, math, fpu}
\usepackage{pgf}
\usepackage{pgfplots}
\pgfplotsset{compat=newest}
\usetikzlibrary{decorations.markings}
\tikzset{->-/.style={decoration={ markings, mark=at position #1 with {\arrow{>}}},postaction={decorate}}}
\usepackage{float}
\usepackage[utf8]{inputenc}
\usepackage[absolute]{textpos}
\usepackage{ulem}
\usepackage{emptypage}
\usepackage{amsbsy,mathrsfs}
\usepackage{subcaption}
\usepackage{xcolor}
\usepackage{multicol}
\usepackage{thmtools, thm-restate}
\usepackage[colorlinks=true,citecolor=blue, hypertexnames=false]{hyperref}

\def\ov{\overline}
\makeatletter
\renewcommand\section{\@startsection {section}{1}{\z@}%
	{-2ex \@plus -1ex \@minus -.2ex}%
	{1ex \@plus.1ex}%
	{\normalfont\bf\sffamily\color{darkblue}}}
\renewcommand\subsection{\@startsection{subsection}{2}{\z@}%
	{-1.75ex\@plus -0.4ex \@minus -.2ex}%
	{0.6ex \@plus .1ex}%
	{\normalfont\small\bf\sffamily}}
\renewcommand\subsubsection{\@startsection{subsubsection}{3}{\z@}%
	{-0.6ex\@plus -0.2ex \@minus -.2ex}%
	{0.4ex \@plus .1ex}%
	{\normalfont\normalsize\it}}
\renewcommand\paragraph{\@startsection{paragraph}{4}{\z@}%
	{0.2ex \@plus0.2ex \@minus0.1ex}{-0.5em}%
	{\normalfont\normalsize\bfseries}}
\def\ps@headings{%
	\let\@oddfoot\@empty
	\let\@evenfoot\@empty

	\def\@evenhead{\small\sffamily\thepage\hfil\slshape\leftmark}%
	\def\@oddhead{\small\sffamily{\slshape\rightmark}\hfil\thepage}%
	\let\@mkboth\markboth
	\def\chaptermark##1{\markboth{{\ifnum \c@secnumdepth >\m@ne
				\if@mainmatter \@chapapp\ \thechapter. \ \fi \fi ##1}}{}}%
	\def\sectionmark##1{\markright {{\ifnum \c@secnumdepth >\z@
				\thesection. \ \fi ##1}}}}
\def\fbf#1{\setbox0=\hbox{$#1$}\kern-0.10\wd0
	\lower0.02em\copy0\kern-\wd0 \lower0.02em\hbox{\kern+0.04em\copy0}\kern-\wd0
	\raise0.00em\copy0\kern-\wd0 \raise0.00em\hbox{\kern-0.04em\box0}}
\def\overl@ss#1#2{\vcenter{\offinterlineskip
		\ialign{$\m@th#1\hfil##\hfil$\crcr#2\crcr<\crcr } }}
\def\gl{\mathrel{\mathpalette\overl@ss>}}
\makeatother

\advance\textwidth -10mm
\advance\textheight -14mm
\advance\hoffset 4mm
\advance\voffset 4mm

\advance\parskip 4pt
\numberwithin{equation}{section}
1

\declaretheorem[name=Theorem, parent=section]{theorem}
\declaretheorem[name=Lemma,sibling=theorem]{lemma}

\declaretheorem[name=Remark, sibling=theorem]{remark}

\declaretheorem[name=Proposition, sibling=theorem]{proposition}
\declaretheorem[name=Assumption, sibling=theorem]{assumption}

\declaretheorem[name=RHP, parent=section]{RHP}

\newcommand{\restr}[2]{#1\raisebox{-.5ex}{$\bigg|$}_{#2}}

\makeatletter
\newcommand{\address}[1]{\gdef\@address{#1}}
\gdef\@address{} 

\renewcommand{\maketitle}{%
	\begin{center}
		{\LARGE\bfseries\sffamily \@title\par}
		\vspace{1.4ex}
		{\large \@author\par}
		\vspace{0.6ex}
		{\itshape \@address\par}
		\vspace{0.2ex}
		{\small \@date\par}
	\end{center}
	\vspace{1.4\bigskipamount}
}
\makeatother

\def\be{\begin{equation}}
	\def\ee{\end{equation}}
\def\bse{\begin{subequations}}
	\def\ese{\end{subequations}}

\definecolor{deeppurple}{rgb}{0.5, 0, 0.7}
\definecolor{deeppurple}{rgb}{0.5, 0, 0.7}
\definecolor{darkblue}{rgb}{0, 0, 0.7}

\definecolor{deeppurple}{rgb}{0.5, 0, 0.7}

\def\half{{\textstyle\frac12}}

\def\dn{\mathop{\rm dn}\nolimits}

\def\Real{\mathbb{R}}
\def\R{\mathbb{R}}
\def\Complex{\mathbb{C}}

\def\E{\mathcal{E}}
\newcommand{\bigo}[1]{\mathcal{O} \left( #1 \right) }

\def\i{\text{i}}

\def\Re{\mathop{\rm Re}\nolimits}
\def\Im{\mathop{\rm Im}\nolimits}
\def\Res{\mathop{\rm Res}\limits}

\def\d{\mathrm{d}}

\let\t=\theta
\def\e{\mathop{\rm e}\nolimits}
\def\sn{\mathrm{sn}}
\def\cn{\mathrm{cn}}
\def\dn{\mathrm{dn}}
\def\@#1{{\mathbf{#1}}}
\def\_#1{{\mathsf{#1}}}
\let\~=\tilde
\let\==\bar
\let\^=\hat
\let\<=\langle
\let\>=\rangle

\def\min{\mathop{\rm min}\nolimits}

\def\note[#1]{\marginpar{\color{blue}[#1]}}

\def\C{{\mathbb C}}
\def\R{{\mathbb R}}
\def\N{{\mathbb N}}

\def\1{{\bf 1}}
\def\s{\sigma}

\def\l{\lambda}

\def\e{\mathrm{e}}

\makeatother

\advance\textheight 12mm
\advance\voffset -6mm
\allowdisplaybreaks

\def\br{\begingroup\color{red}}
\def\er{\endgroup}

\newcommand{\zeroset}{\mathrm{Z}}

\renewcommand{\L}{\mathrm{L}}
\newcommand{\Msol}{M^{\mathrm{sol}}}

\begin{document}
\pagestyle{plain}

\title{Inverse scattering for the focusing  nonlinear Schr\"odinger equation with elliptic  background and full soliton gas  }
\author{Tamara Grava $^1$, Robert Jenkins $^2$, Xiaofan Zhang $^{3}$ and Zechuan Zhang $^1$}
\address{$^1$ Scuola Internazionale Superiore di Studi Avanzati (SISSA), Trieste, 34136 - Italy\\ 
	$^2$ 
	University of Central Florida, Orlando, FL 32816 - USA\\
	$^3$ China University of Mining and Technology, Xuzhou 221116 - China}
\maketitle

\begin{abstract}
\noindent
In this manuscript 
we develop the direct and inverse scattering problem for the  cubic focusing nonlinear Schr\"odinger equation
and for  initial data  that are asymptotic to an  elliptic  travelling wave with  distinct phase at $\pm \infty$.
We consider the case in which the spectral bands intersect the real axis.
We then show that  this class of initial data  has non zero intersection with the  full soliton gas initial data.

\end{abstract}

\medskip
\tableofcontents

\section{Introduction and outline}
\label{s:intro}

In this manuscript we study the direct and inverse scattering transforms associated to the Cauchy problem for the cubic focusing nonlinear Schr\"odinger (NLS) equation
\be
\i u_t + \half u_{xx} +  |u|^2 u = 0\,,\quad u(x,0) = u_0(x),  \qquad  (x,t) \in \R^2. 
\label{e:nls}
\ee 
for initial data $u_0$ which approaches a step-like elliptic background (cf. \eqref{initial_data}).
Here $u = u(x,t)\in\C$ and subscripts denote partial derivatives. The term `background' here refers to a function $u_b(x,0)$ which our initial data approaches asymptotically. 
Typically one knowns the explicit evolution $u_b(x,t)$ of the background under evolution by \eqref{e:nls}. 
The Cauchy problem on the zero background \cite{BC, DZ03} and for nonzero constant backgrounds \cite{BG14, DPVV14, FT} are well-studied.
Much less is known about the behavior of solutions which approach non-constant background states.
The motivation to study such problem stems from the fact that the development of direct and inverse scattering for such initial data enables to study long time  behaviour of elliptic travelling waves under a broad range of perturbations.  
While the family of linearly and orbitally stable perturbations of elliptic travelling wave are well known, (see, e.g. \cite{GH1}\cite{Deconinck}), it is not known what happens to families of unstable  perturbation of  elliptic  travelling waves  over long time. This is the main motivation of our present study. 
 
The cubic NLS equation \eqref{e:nls} admits a family of quasi-periodic solutions known as finite-gap solutions. 
The simplest of these solutions (the genus one case) can be expressed  by elliptic functions.  The explicit form  is given by
\begin{gather}
\label{e:elliptic0}
	u_e(x, t; x_0, \varphi_0) := \br\lambda\er U(\lambda (x-x_0 -v t);\lambda^2 t) \e^{\i v \left((x-x_0) - \frac{v}{2} t\right) +2\i \varphi_0}\\
	\label{e:ellipticU}
	U(x;t)=\sqrt{ \left[ b- m\, \sn^2(x ; m)    \right] } \e^{\i \phi(x)-\i  \widetilde{\omega_0} t}\\
	\label{e:phi}
	\phi(x)=\int_{0}^x\dfrac{ \sqrt{b(1-b)(b-m)}\d y}{\left[ b- m\, \sn^2( y  ; m)    \right]  }\,, \qquad \widetilde{\omega_0} = \frac{1+m-3b}{2}
\end{gather}
where $\sn(\,\cdot\,;m)$  is the Jacobi elliptic function of modulus $m$. The six parameters $(m,b,v, \lambda,x_0,\varphi_0)$ satisfy $0<m<b<1$, $\lambda>0$, and $v, x_0, \varphi_0 \in \R$. 
The symmetries of \eqref{e:nls}:
\begin{enumerate}
  \item $u(x,t) \mapsto u(x,t)\,\e^{\i \varphi_0}$, $\varphi_0\in\R$
  (Phase invariance);
  \item $u(x,t) \mapsto u(x-x_0,t)$, $x_0\in\R$ (Translation
    invariance);
  \item $u(x,t)\mapsto  \e^{\i \big( vx - \frac{v^2}2t\big)}
    u(x-vt,t)$, $v\in\R$ (Galilean invariance);
  \item $u(x,t) \mapsto \lambda u(\lambda x,\lambda^2t)$, $\lambda > 0$
    (Dilation invariance),
\end{enumerate}
allow one to reduce the six parameter family of solutions \eqref{e:elliptic0} to \eqref{e:ellipticU} depending only on the two parameters $(m,b)$.  In what follows,  in order to have formulas which directly apply to the general case we will work with general solutions \eqref{e:elliptic0}

The cubic NLS  equation is integrable, namely it can be expressed as the compatibility condition of two linear equations, the so-called Zakharov-Shabat (ZS) pair,  introduced in  1972 \cite{ZS72}.
This pair takes the form 
\bse%
\label{e:NLSLP}
\begin{align}
	&W_x = \mathcal{L}(u,z)\,W\,, &&\mathcal{L}(u,z) = -\i z\sigma_3 + U(x,t)\,,
	\label{e:zs}
	\\
	&W_t = \mathcal{B}(u,z)\,W\,,  &&\mathcal{B}(u,z) = -\i z^2\sigma_3 + zU - \frac{1}{2}\i\sigma_3(U^2-U_x)\,,
	\label{e:NLSLP2}
\end{align}
\ese
where $W(x,t;z)\in  \mbox{Mat}(2\times 2,\Complex)$ is the fundamental matrix solution  of the above linear system,  $z\in\Complex$ is the spectral parameter,  and  
\be
U(x,t)=\begin{pmatrix}
	0 & u(x,t)\\
	-\overline{u(x,t)} & 0
\end{pmatrix}, \qquad \sigma_3=\begin{pmatrix*}[r]1&0\\0&-1\end{pmatrix*}.
\ee

In the spectral plane, the finite-gap solution \eqref{e:elliptic0} is parameterized by two distinct points $z_1, z_2 \in \C^+$ and the phase constants $x_0, \varphi_0 \in \R$. 
There is a one-to-one map between the complex parameters $z_1, z_2$ and the four real parameters $m,b,\lambda,v$:
\be\label{e:modulus}
	m=1-\left|\frac{z_1-z_2}{z_1-\overline{z_2}}\right|^2,  \qquad 
	b=\left( \dfrac{\Im(z_1-\overline{z_2})}{|z_1-\overline{z_2}|}\right)^2, \qquad 
	\lambda = |z_1 -\overline{z_2}|,\qquad 
	v=-\Re(z_1+z_2)\,.
\ee 
The ZS scattering problem \eqref{e:zs},  with potential of the form  \eqref{e:elliptic0},   generates a continuous spectrum  on the real line and two bands, which form the set $\Sigma$ consisting of  two finite arcs connecting $z_1, z_2, \bar z_1, \bar z_2$. 
The precise topology of these arcs depends on the choice of $z_1$ and $z_2$ (see Proposition~\ref{prop:sigma}).


In this work, we study the Cauchy problem for \eqref{e:nls} given initial data that approaches different elliptic travelling waves at either spatial infinity. 
These waves have the same spectral parameters $z_1, z_2$, but different phase parameters $x_0$ and $\varphi_0$.
For this reason we indicate the elliptic travelling wave as $u_e(x,t,x_0,\varphi_0)$. We assume that 
 \be
 \label{initial_data}
u(x,0) = u_0(x)\to\left\{
\begin{array}{ll}
 u_e^\ell(x)=u_e(x,0,x^\ell_0,\varphi^\ell_0),&\qquad \mbox{as $x\to-\infty$}\\
 u_e^r(x)= u_e(x,0,x^r_0,\varphi^r_0),&\qquad \mbox{as $x\to\infty$,}
 \end{array}
 \right.
\ee
where $x_0^\ell, x_0^r$ and $\varphi_0^\ell,\varphi_0^r$ are in general, distinct phase shifts. Both limiting elliptic waves generate a continuous spectrum of the associated ZS operator, denoted by $\Sigma^\ell\cup\R$ and $\Sigma^r\cup\R$, respectively.
In our previous work \cite{GJZZ} we chose initial data such that: 1) the arcs $\Sigma^\ell$ and $\Sigma^r$ were independent with distinct endpoints; 2) neither spectral bands was permitted to intersect the real axis.
In this manuscript we choose initial data such that: 
\begin{itemize} 
\item  $\Sigma^\ell\equiv \Sigma^r=\Sigma$;
\item $\Sigma$  is permitted to intersects the real line  (see Figure~\ref{fig:cuts}).
\end{itemize}
For illustration, Figure~\ref{fig:numerics} shows two numerical examples of step-like elliptic backgrounds with coincident spectral bands.
This does not exhaust the zoo of all possible configurations of the spectrum, but the remaining configurations are easily adapted from this work and our previous manuscript \cite{GJZZ}.  

\begin{figure}[t]
	\centering
	\begin{overpic}[width=0.45\linewidth]{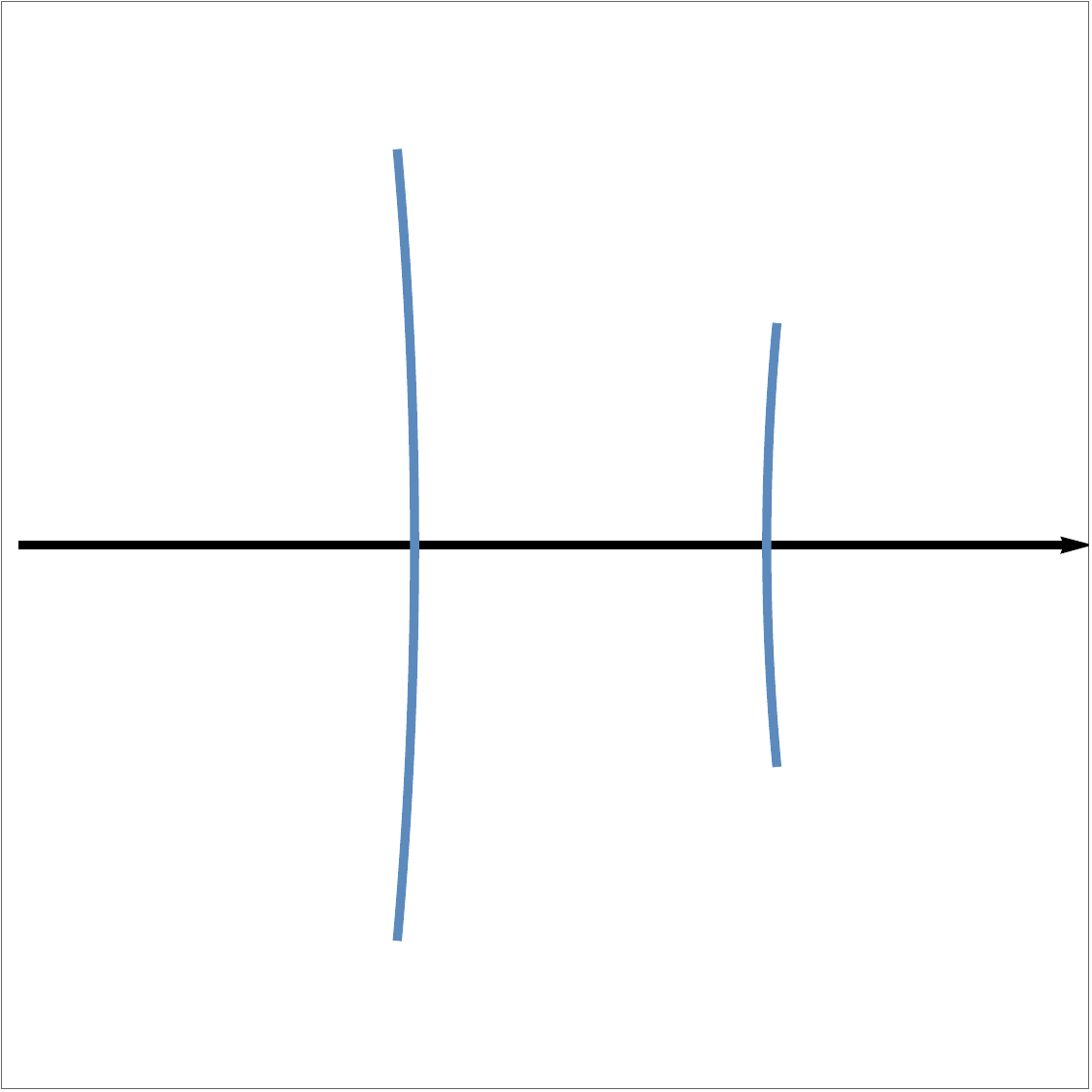}
		\put(34,85){\makebox(0,0)[r]{$z_1$}}
		\put(34,14){\makebox(0,0)[r]{$\overline z_1$}}
		
		\put(74,70){\makebox(0,0)[l]{$z_2$}}
		\put(74,28){\makebox(0,0)[l]{$\overline z_2$}}
		
		\put(35,85){\color{red}$\bullet$} 
		\put(35,12.5){\color{red}$\bullet$}  
		\put(70,69){\color{red}$\bullet$}  
		\put(70,28){\color{red}$\bullet$}  
		\put(36.75, 48.75){\color{red}$\bullet$}  
		\put(69., 48.75){\color{red}$\bullet$}  
		\put(94,53){\color{black}$\mathbb{R}$}
		
		\put(29,63){$\Sigma_1^+$}
		\put(29,30){$\Sigma_1^-$}
		\put(72,57){$\Sigma_2^+$}
		\put(72,40){$\Sigma_2^-$}
		\put(39, 45){\small{$\xi_1$}}
		\put(65.25,45){\small{$\xi_2$}}
	\end{overpic}
	\caption{
	The spectral bands $\Sigma=\Sigma_1 \cup \Sigma_2$ which define the spectrum of the finite-gap solutions (see \eqref{e:elliptic0} and \eqref{initial_data}) which define the background of our initial data. 
	We use $\pm$ superscripts to denote the part of each band in the upper/lower complex half-planes: $\Sigma_k^\pm = \Sigma_k \cap \mathbb{C}^\pm$, $k=1,2$. The points $\xi_k,\ k=1,2$, denote the intersection points of each $\Sigma_k$ with $\R$. 
	}
	\label{fig:cuts}
\end{figure}

\begin{figure}[!htbp]

\begin{subfigure}[t]{0.45\textwidth}
	\centering
	\begin{overpic}[
		width=7.3cm,
		height=5.5cm
		]{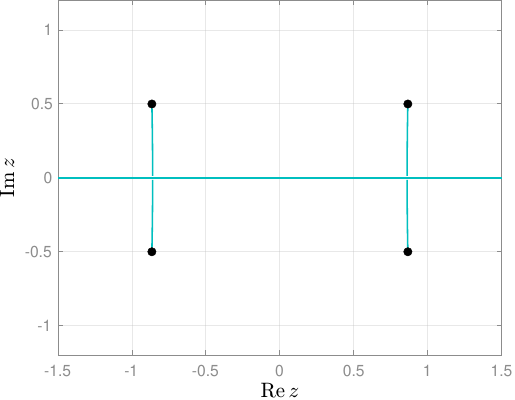}
		\put(32,57){$z_1$}
		\put(82,57){$z_2$}
		\put(32,27){$\bar{z}_1$}
		\put(82,27){$\bar{z}_2$}
	\end{overpic}
	\caption{}
\end{subfigure}
\hfill
\begin{subfigure}[t]{0.55\textwidth}
	\centering
	\includegraphics[
	width=7.3cm,
	height=5.5cm
	]{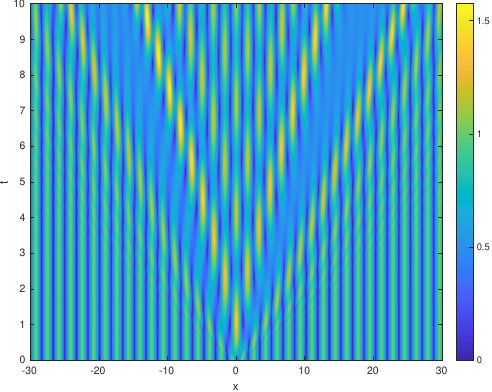}
	\caption{}
\end{subfigure}

\par\vspace{0.2em}

\begin{subfigure}[t]{0.45\textwidth}
	\centering
	\includegraphics[
	width=7.3cm,
	height=5.5cm
	]{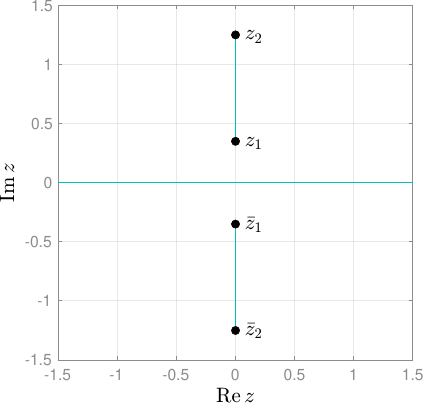}
	\caption{}
\end{subfigure}
\hfill
\begin{subfigure}[t]{0.55\textwidth}
	\centering
	\includegraphics[
	width=7.3cm,
	height=5.5cm
	]{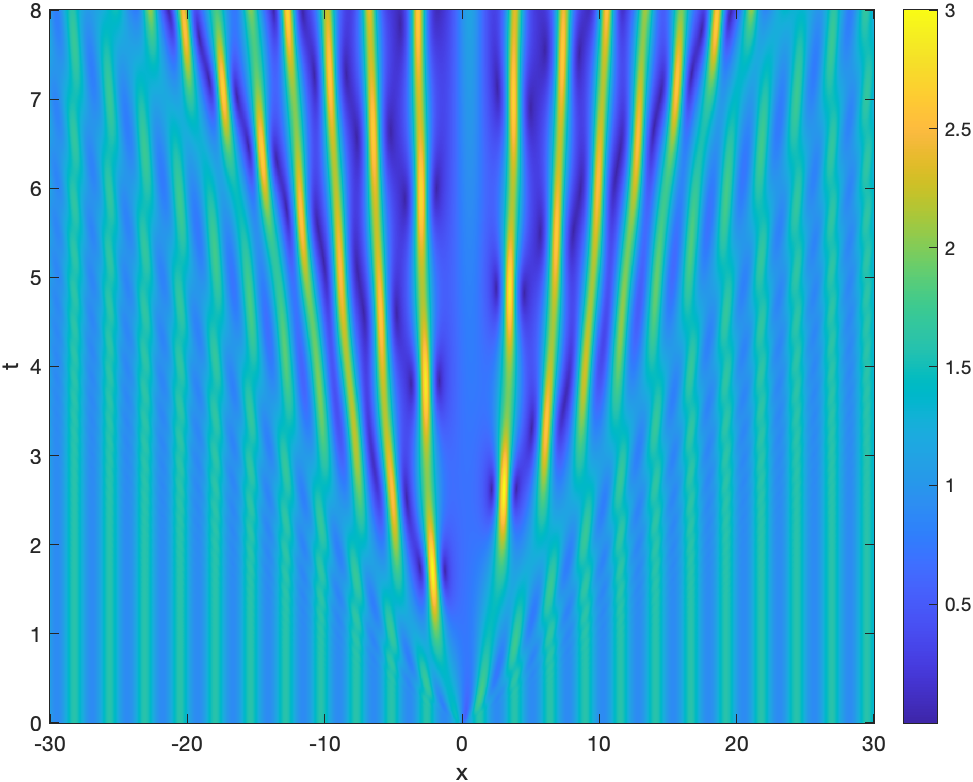}
	\caption{}
\end{subfigure}
	\caption{Numerical simulations for step-like initial data connecting elliptic
		backgrounds with coincident spectral bands. In both cases, the left and
		right backgrounds have the same elliptic parameters, and the perturbation is
		introduced only through different spatial and phase shifts $(x_0,\phi_0)$.
		The first row corresponds to the $\cn$ case, for which we use $(x_0^L,\phi_0^L)=(2,0.3)$ and
		$(x_0^R,\phi_0^R)=(0,0)$. The second row corresponds to the $\dn$ case, for which we use
		$(x_0^L,\phi_0^L)=(1.28,0.85)$ and
		$(x_0^R,\phi_0^R)=(0,0)$. For each row, the left figure shows the corresponding spectral bands, and the right figure shows the space--time density plot of $|q(x,t)|$.
	}
	
	\label{fig:numerics}
\end{figure}

\paragraph{Statement of the result.}
To state our results we introduce standard Sobolev spaces on the real halt-lines $\R^\pm$,  
\[
	\mathcal{W}^{n,1}(\R^\pm) = \left\{ f \in L^1(\R^\pm) \, \mid \, \partial_x^j f \in L^1(\R^\pm), 0 \leq j \leq n  \right\},
\]
and the weighted spaces 
\[
L^{p,k}(\R^\pm)=\left\{f\in L^{p}(\R^\pm),\;\; |x|^k f \in L^{p}(\R^\pm)\right\},\quad p,k\in\N.
\]
%
\begin{assumption}
\label{ass1}  The initial data $u_0$ satisfies \eqref{initial_data} in the strong sense that  $u_0 - u_e^\ell \in \mathcal{W}^{4,1}(\R^-)$
and $u_0 - u_e^r \in \mathcal{W}^{4,1}(\R^+)$. 
\end{assumption}
\begin{assumption}
\label{ass2}
The initial data $u_0$ is generic in the sense of  \cite{BC}, i.e., it produces a discrete spectrum that is simple, finite, and contains no points in $\R$. 
\end{assumption}

We consider initial data satisfying assumptions \ref{ass1} and \ref{ass2}, with  the spectral bands $\Sigma$  that intersect the real line.
%
%
 The main results of this manuscript are:
 \begin{itemize}
 \item the   calculation of the direct spectral map between initial data and scattering data;
 \item the  realization of the scattering data for the inverse problem as a limit of infinite number of solitons that we call the full gas problem.
 \end{itemize}
The soliton gas  solutions were introduced in \cite{GGJM}, \cite{GGJMM} for the Korteweg-de Vries  and modified Korteweg de Vries equations  and 
in \cite{BGO1}, \cite{BGO2}, \cite{BGO3} for the nonlinear Schr\"odinger equation. In all these cases solitons are so dense that are no more distinguishable from one another and   they form a condensate that has non-zero density for  $x\to+\infty$ or  $x\to-\infty$.  The so called {\it full gas solution}, namely the case in which solitons have non zero density as $x\to+\infty$ and  $x\to-\infty$  was derived in \cite{GJM} for the modified Korteweg-de Vries  equation and it is derived in the present manuscript for the NLS  equation. In particular our result  shows that   any  step-like initial data  of the form \eqref{initial_data}
 can be  obtained from a full  soliton gas. Furthermore the primitive potential   formally introduced by Dyachenko, Zakharov and Zakharov,  \cite{DZZ}  arises naturally from step-like potential and can be seen as a full soliton gas.

 \begin{theorem}[Direct problem]\label{thm:direct}
 	Given initial data $u_0$ of the type \eqref{initial_data} that satisfies Assumptions~\ref{ass1} and \ref{ass2}, the scattering map takes the form
	$u_0 \mapsto  \left(  r_1, r_2, \rho, \{ (z_j, c_j) \}_{k=1}^N  \right)$ where
	\begin{itemize}
\item  $r_1$ and $r_2$ are continuous functions supported on $\Sigma^+ = \Sigma \cap \C^+$, i.e.,  $r_1, r_2\in C(\Sigma^+)$;
\item  $\rho\in L^{2,2}(\R)\cap L^{1,2}(\R)$;
\item  The finite set of discrete points $z_j\in\C^+\setminus \Sigma$ are $L^2$ eigenvalues of the ZS scattering operator \eqref{e:zs} with associated norming constant $c_j\in\C\backslash\{0\}$, $j=1,\dots, N$.
\end{itemize}
	
 \end{theorem}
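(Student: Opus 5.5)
The plan is to build Jost solutions normalized to the left and right elliptic backgrounds. The scattering data are then read off from the relation between the two Jost solutions on the common continuous spectrum $\R\cup\Sigma$.

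\textbf{Background solutions.} For the background $u_e(x,0;x_0,\varphi_0)$ we use the standard Baker--Akhiezer (finite-gap) construction on the genus-one curve $w^2=(z-z_1)(z-\bar z_1)(z-z_2)(z-\bar z_2)$, with cuts along $\Sigma$. This gives a matrix solution $\Psi_e(x;z)$ of \eqref{e:zs} of the form $\Psi_e=\Phi_e(x;z)\e^{-\i p(z)x\sigma_3}$. Here $\Phi_e$ is bounded and quasi-periodic in $x$, and $p(z)=z+O(1/z)$ is the quasi-momentum. The quasi-momentum satisfies $\Im p(z)>0$ in $\C^+\setminus\Sigma$ and is real on $\R\cup\Sigma$; this is the property that replaces $\Im z>0$ in the zero-background theory. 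Because the left and right backgrounds share the spectral parameters $z_1,z_2$, the same $p(z)$ and the same curve serve both sides. Only the divisor, which depends on $(x_0,\varphi_0)$, differs. We write $\Psi^\ell_e$ and $\Psi^r_e$ for the two background solutions.

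\textbf{Jost solutions.} We define $\Psi^{\ell,r}(x;z)$ through Volterra equations
\[
\Psi^{r}(x;z)=\Psi^r_e(x;z)-\int_x^{\infty}\Psi^r_e(x;z)\Psi^r_e(y;z)^{-1}\big(U_0(y)-U^r_e(y)\big)\Psi^r(y;z)\,\d y ,
\]
taken column by column, and analogously on the left. Assumption~\ref{ass1} gives $u_0-u_e^{r}\in L^1(\R^+)$. The column with decaying exponential then extends analytically to $\C^+\setminus\Sigma$ (respectively $\C^-$), and it is continuous up to $\R\cup\Sigma$ away from the branch points $z_1,z_2$. Higher regularity in $z$ on $\R$ is obtained by integrating by parts in the kernel, using $\mathcal{W}^{4,1}$.

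\textbf{Scattering data.} On $\R\cup\Sigma$ both Jost matrices solve \eqref{e:zs}, so $\Psi^\ell=\Psi^r S(z)$ with $\det S=1$ and the usual symmetry $S=\sigma_2\overline{S}\sigma_2$. This gives $S=\begin{pmatrix}a&-\bar b\\ b&\bar a\end{pmatrix}$. We set $\rho=b/a$ on $\R$. On $\Sigma^+$ the columns have boundary values $\pm$ on the two sides of the cut, related through the background jump $\Psi^{\ell,r}_{e,+}=\Psi^{\ell,r}_{e,-}J_\Sigma$. Comparing the $\pm$ relations produces two independent continuous reflection-type functions $r_1,r_2$ on $\Sigma^+$. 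They are ratios of Wronskian-type quantities, built like $a,b$ from $W(\Psi^\ell_{1,\pm},\Psi^r_{2,\pm})$. Continuity of $r_1,r_2$ follows from the continuity of the Jost solutions up to $\Sigma$; at $z_1,z_2$ we check boundedness using the local $(z-z_k)^{\pm1/4}$ behaviour of $\Phi_e$.

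The zeros of $a(z)=\det[\Psi^\ell_1,\Psi^r_2]$ in $\C^+\setminus\Sigma$ are exactly the $L^2$ eigenvalues. By Assumption~\ref{ass2} they are finite, simple, and off $\R$. At such a zero $z_j$ we have $\Psi^\ell_1(z_j)=\gamma_j\Psi^r_2(z_j)$, and we set $c_j=\gamma_j/a'(z_j)\neq0$.

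\textbf{Decay of $\rho$ (main obstacle).} The hardest step is $\rho\in L^{2,2}\cap L^{1,2}$, since this needs $a$ bounded away from zero and good decay of $b$ on $\R$. For $|z|\to\infty$, we first show $b(z)=O(|z|^{-4})$. This comes from repeated integration by parts in the Volterra equations, each step gaining a power of $z$ because $p(z)=z+O(1/z)$ and $\Phi_e$ admits a large-$z$ expansion; four derivatives of $u_0-u_e$ give four powers. Together with $a\to1$, this yields $|z|^2\rho\in L^1\cap L^2$ at infinity. On compact parts of $\R$, continuity of $\rho$ requires $a\neq0$ on $\R$, which is Assumption~\ref{ass2}. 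The delicate points are the intersections $\xi_1,\xi_2=\Sigma\cap\R$. There $\R$ meets the cut, and we must show that the one-sided limits of $a,b$ from the two sides of $\Sigma$ stay finite and that $\rho$ has no singularity. We would try to handle this by writing $\Phi_e$ locally on the Riemann surface, where it is analytic across $\xi_k$ in the sheet variable. The Volterra kernel is then uniformly bounded near $\xi_k$, so $a,b$ extend continuously from each side. At worst $\rho$ has a jump across $\xi_k$, which does not affect its membership in $L^{1,2}\cap L^{2,2}$.
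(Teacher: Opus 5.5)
Your treatment of the Jost solutions, of $a$ and $b$, of the eigenvalues with $c_j=\gamma_j/a'(z_j)$, and of the decay of $b$ matches Section~\ref{sec_perturbative}. The genuine gap is in how $r_1,r_2$ arise. On $\Sigma^+$ the matrix $M_o=[W_1^\ell/a,\,W_2^r]\,\mathrm{e}^{\mathrm{i}(x-x_0^r)z\sigma_3}$ has the jump \eqref{jumpM}. Its diagonal entries are $-\mathrm{i}b^*(z_-)\mathrm{e}^{-\mathrm{i}\varphi_0^\ell}/a(z_+)$ and $-\mathrm{i}b(z_-)\mathrm{e}^{\mathrm{i}\varphi_0^r}/a(z_-)$. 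By \eqref{jumpS} their ratio is $-b(z_-)a(z_+)/\bigl(b(z_+)a(z_-)\bigr)$, which is not $1$ in general. The jump in RHP~\ref{RHP:Mtilde} has equal diagonal entries, so no pointwise definition of $r_1,r_2$ on $\Sigma^+$ brings \eqref{jumpM} to that form, whether by ratios of the Wronskians $W(\Psi^\ell_{1,\pm},\Psi^r_{2,\pm})$ or otherwise. You must change the unknown by a diagonal conjugation $M=M_o\,a_2^{\sigma_3}$. Here $a_2\to1$ at infinity, $(a_{2+}/a_{2-})^2=-b(z_-)a(z_+)/\bigl(b(z_+)a(z_-)\bigr)$ on $\Sigma^+$, and $|a_2|=1$ on $\R$. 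That is a scalar Riemann--Hilbert problem, and solving it is the missing idea. The paper solves it with the function $h$ in \eqref{e:defh}, a Cauchy integral of three densities: $\log(-b(s_+)/b(s_-))$ over $\Sigma^+$, $\log(-b^*(s_-)/b^*(s_+))$ over $\Sigma^-$, and $-\log(1+|b/a|^2)$ over $\R$. Together with the Blaschke product of the eigenvalues, $h$ gives the factorization $a=a_1a_2$ in \eqref{e:defa12} with the relations \eqref{e:a12jump}. Only then are $r_1,r_2,\rho$ defined, by \eqref{e:r12}. These are nonlocal functionals of $b$ on all of $\R\cup\Sigma$ and of $\{z_k\}$, so their continuity is not inherited from that of the Jost solutions.

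This also moves the real difficulty away from where you place it. The weighted integrability of $\rho$ is routine: $b\,\mathrm{e}^{-\mathrm{i}(x_0^\ell-x_0^r)z}=\mathcal{O}(z^{-4})$ and $|a|\to1$ on $\R$ as $|z|\to\infty$. Moreover $|\rho|=|b/a|$ on $\R$, because $|a_2|=1$ there. By contrast, continuity of $r_1,r_2$ up to the endpoints of $\Sigma^+$ needs $h$, $a_1$, $a_2$ to stay bounded at $z_1,z_2$ and at $\xi_k=\Sigma_k\cap\R$. At a branch point the density of $h$ vanishes, because $b(s_+)/b(s_-)\to-1$. At $\xi_k$, however, both arcs $\Sigma_k^\pm$ end. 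A Cauchy integral whose density does not vanish at an endpoint contributes a factor $(z-\xi_k)^{\nu}$ to $h$, with $\nu$ fixed by the endpoint value of the density, so $h$ need not be bounded there. The paper removes this by deducing $b(s_+)/b(s_-)=b^*(s_-)/b^*(s_+)$ on $\Sigma_k$ from \eqref{jumpS}. The $\Sigma_k^+$ and $\Sigma_k^-$ integrals then merge into one integral over $\Sigma_k$, in which $\xi_k$ is an interior point. The paper also checks that the four jumps meeting at $\xi_k$ satisfy $V_1V_2^{-1}V_3^{-1}V_4=I$ (Appendix~\ref{a:jumpconsistensy}). Your local analysis of $\Phi_e$ near $\xi_k$ only yields one-sided limits of $a$ and $b$, so it addresses neither point. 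Relatedly, on $\Sigma$ the Schwarz symmetry pairs $z$ with $\bar z\in\Sigma^-$, so $S(z)$ is not $\begin{pmatrix}a&-\bar b\\ b&\bar a\end{pmatrix}$ pointwise there. The usable relations on $\Sigma$ are \eqref{jumpS}, obtained by combining \eqref{jumpW} with \eqref{schwarzW}. Both the symmetrization and the merging at $\xi_k$ depend on them.
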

Given the spectral data, we express the inverse problem at any time $t\in \R$ as a Riemann-Hilbert problem that encodes the time-evolved solution of the NLS equation.
By Assumption~\ref{ass2} the discrete spectrum is finite and does not intersect $\Real\cup\Sigma$. To set the Riemann-Hilbert problem we orient $\R$ left-to-right. 
When the spectral bands $\Sigma$ cross the real axis (Figure~\ref{fig:cuts}) we orient each connected component upward; when $\Sigma$ does not cross the real axis (Figure~\ref{fig:cuts1}) we orient $\Sigma^+$ from $z_1$ to $z_2$ and $\Sigma^-$ from $\bar z_2$ to $\bar z_1$. In either case $\Sigma$ is anti-symmetric, i.e.,  $\overline{\Sigma} = - \Sigma$. 

 \begin{RHP}\label{RHP:Mtilde}
 Given the data 	described in Theorem \ref{thm:direct},  
	find a $2\times2$ matrix-valued function $M(z;x,t )$ which satisfies the following conditions:
	\begin{enumerate}
		\item $M(z;x,t)$ is analytic for $z \in \Complex\setminus (\Real\cup\Sigma_1\cup\Sigma_2)$  with  simple poles at the points $\{z_j,\overline{z_j}\}_{j=1}^N$;
		\item $M(z;x,t)=I+\mathcal{O}(z^{-1})$, as $z\to\infty$\,;
		\item $M(z;x,t)$ satisfies Schwarz symmetry: $\overline{M(\overline{z};x,t)}= \sigma_2 M(z;x,t) \sigma_2$, 
		   where $\sigma_2= \begin{psmallmatrix*}[r] 0& -\mathrm{i} \, \\ \mathrm{i} & 0 \end{psmallmatrix*}$;
		\item $M(z;x,t )$ satisfies the jump condition $M(z_+;x,t)=M(z_-;x,t) V(z;x,t)$ for $z\in \R\cup  \Sigma$ 
		 where $ V(z;x, t)$ is given by
		\be\label{e:V0}
		V(z;x,t)=\begin{cases}
				\begin{pmatrix}
					\dfrac{1-r_1(z)r_2(z)}{1+r_1(z)r_2(z)} & \dfrac{2\mathrm{i} r_2(z)}{1+r_1(z)r_2(z)}\e^{-2\theta(z;x,t)} \vspace{0.5ex}
					\\[0.8em]
					\dfrac{2\mathrm{i} r_1(z)}{1+r_1(z)r_2(z)}\e^{2\theta(z;x,t)} & 	\dfrac{1-r_1(z)r_2(z)}{1+r_1(z)r_2(z)} 
				\end{pmatrix}, & z\in\Sigma \cap \C^+ ,			\vspace{6pt}
			\\
			\begin{pmatrix}
				1+|\rho(z)|^2 & \rho^*(z)\e^{-2\theta(z;x,t)} \vspace{0.5ex}\\
				\rho(z)\e^{2\theta(z;x,t)} & 1
			\end{pmatrix}, & z\in \Real, 
		\end{cases}
		\ee
		and for $z\in \Sigma\cap \C^-$,  $V(z;x,t) = \sigma_2 \overline{V(\bar z;x,t)} \sigma_2$. 
		\begin{equation}\label{e:theta}
			\theta(z;x,t) = \mathrm{i}t z^2+\mathrm{i}x z\,;
		\end{equation}
		 \item $M(z;x,t)$ has simple poles at the  points  $z_k$,  $\ov{z}_k $, $k=1,\dots, N$,   with residues
		\begin{equation}	\label{e:residue}
			\underset{z = z_k}{\Res}\, M(z;x,t) 
			= \lim_{z \to z_k} M(z;x,t) 
			\begin{pmatrix} 0 & 0 \\ 
				c_k \e^{2\mathrm{i}(x - x_0^r)z_k+2\mathrm{i} t z_k^2} & 0 \end{pmatrix},
		\end{equation}
		and the residue at $\overline{z}_k$ is  obtained by Schwarz symmetry.
		\end{enumerate}
\end{RHP}

 \begin{theorem}[Inverse problem]\label{thm:inverse}
 	The potential $u(x,t)$ is reconstructed from the unique solution $M(z;x,t)$ of the Riemann--Hilbert problem~\ref{RHP:Mtilde}, with reflection coefficients  $r_1,r_2\in C(\Sigma)$, and  $\rho\in L^{2,2}(\R)\cap L^{1,2}(\R)$,  and discrete spectral data $\{z_k, \zeta(z_k)\}_{j=1}^N$, via
 	\[
 	u(x,t) = 2\mathrm{i} \lim_{z \to \infty} z\, M_{12}(z;x,t).
 	\]
	Furthermore $u(x,t)\in C^2(\R)\times C(\R)$.
 \end{theorem}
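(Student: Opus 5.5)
The plan has three parts: establish uniqueness and solvability of RHP~\ref{RHP:Mtilde}, show that the reconstructed $u$ solves \eqref{e:nls}, and prove the stated regularity.

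\emph{Step 1: reduce to a problem without poles.} First I remove the poles at $z_k,\overline{z_k}$. I surround each pole by a small circle $\gamma_k$ in $\C^+$, chosen disjoint from $\R\cup\Sigma$ (possible by Assumption~\ref{ass2}), together with its Schwarz-reflected circle $\overline{\gamma_k}$. Inside the disks I replace $M$ by $M$ multiplied by the triangular factor
\[
\begin{pmatrix} 1 & 0\\ -\frac{c_k \e^{2\i(x-x_0^r)z_k+2\i t z_k^2}}{z-z_k} & 1\end{pmatrix}.
\]
This trades the residue condition \eqref{e:residue} for an analytic jump on $\gamma_k$. The result is an equivalent holomorphic RHP on the contour $\Gamma=\R\cup\Sigma\cup\bigcup_k(\gamma_k\cup\overline{\gamma_k})$ with Schwarz-symmetric jumps.

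\emph{Step 2: uniqueness.} Since $\det V\equiv 1$ on every component (on $\Sigma^+$ one checks $\det V=((1-r_1r_2)^2+4r_1r_2)/(1+r_1r_2)^2=1$), Liouville's theorem gives $\det M\equiv1$. Then $M_1M_2^{-1}$ has no jumps and tends to $I$ at infinity, so uniqueness follows. Here the local behaviour at the endpoints $z_1,z_2,\overline{z_1},\overline{z_2}$ and at the crossing points $\xi_1,\xi_2$ must be specified (at most $|z-z_j|^{-1/4}$ singularities, $L^2$ boundary values), so that the singularities are removable.

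\emph{Step 3: existence.} I use Zhou's vanishing lemma. I set up the Beals--Coifman singular integral equation $\mu=I+C_w\mu$ in $L^2(\Gamma)$. Because $r_{1,2}\in C(\Sigma)$ and $\rho\in L^{2,2}\cap L^{1,2}$, the operator $1-C_w$ is Fredholm of index zero, after a local parametrix argument at the junctions $\xi_k$ where $\Sigma$ meets $\R$. It therefore suffices to show that the homogeneous problem (with $M\to0$ at infinity) has only the trivial solution. For this I form $H(z)=M(z)M(\bar z)^\dagger$ on $\C^+$ and integrate over $\R$ together with the real-symmetric parts of the contour. Because $\overline\Sigma=-\Sigma$ is not symmetric under conjugation, the bands must first be deformed. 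I would conjugate the $\Sigma$ jumps, using the Schwarz symmetry $V(z)=\sigma_2\overline{V(\bar z)}\sigma_2$ on $\Sigma^-$, into a form where, on $\Sigma^+\cup\Sigma^-$ viewed as a pair, $V+V^\dagger$ is positive semidefinite. On $\R$, $V+V^\dagger$ is positive definite because $1+|\rho|^2>0$, giving $M_-\equiv0$ there, and analytic continuation then gives $M\equiv0$.

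\emph{Main obstacle.} The positivity on the band $\Sigma$ is the step I expect to be hardest. It needs a sign condition on $r_1r_2$, presumably $r_1\overline{r_2}$ or $\i r_1$ real of a definite sign, which comes out of the direct problem in Theorem~\ref{thm:direct} from the reality structure of the Floquet data. If that condition is unavailable, the fallback is to use the jump's factorization: write $V$ on $\Sigma^+$ as $(1+r_1r_2)^{-1}$ times a product of triangular factors, and deform the factors off the band into lens regions.

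\emph{Step 4: the NLS equation.} Given solvability, define $\Psi=M\e^{-\theta\sigma_3}$. Its jumps are independent of $x$ and $t$, so $\Psi_x\Psi^{-1}$ and $\Psi_t\Psi^{-1}$ are entire. Liouville's theorem together with the expansion $M=I+M^{(1)}/z+\dots$ then gives the Lax pair \eqref{e:NLSLP}, where $u=2\i\lim_{z\to\infty} zM_{12}$, and the compatibility condition yields \eqref{e:nls}.

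\emph{Step 5: regularity.} The $C^2$ dependence in $x$ and continuity in $t$ follow by differentiating the singular integral equation. Each $x$-derivative produces a factor of $z$ in the jump on $\R$, and this is controlled by the weights in $\rho\in L^{2,2}\cap L^{1,2}$. The jumps on the compact bands $\Sigma$ and on the circles are smooth in $x$ and $t$. This bound gives exactly two $x$-derivatives, whereas the $t$-derivative brings a factor $z^2$, so only continuity in $t$ is obtained.
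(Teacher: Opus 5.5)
Your skeleton matches the paper's proof:
\begin{itemize}
\item remove the poles with the factors $I-\mathfrak{C}_j/(z-z_j)$ on small disks;
\item pass to the Beals--Coifman equation in $L^2$;
\item get invertibility of $1-C_w$ from Zhou's vanishing lemma;
\item obtain regularity by differentiating the integral equation.
\end{itemize}
The paper defers the last two steps to \cite{GJZZ}.

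The gap is Step~3. You never complete the vanishing argument. You leave it contingent on positivity of the band jump, to be extracted from a sign condition on $r_1r_2$ or else from a lens deformation. No such sign condition is available. The theorem assumes only $r_1,r_2\in C(\Sigma)$. The data of Theorem~\ref{thm:solgas}, $r_1=-\i C\Phi_\alpha$ and $r_2=\i D\Phi_\alpha^{-1}/(1+CD)$ with essentially arbitrary analytic $C,D$, have no sign structure. Your premise is also wrong. The relation $\overline\Sigma=-\Sigma$ does not say $\Sigma$ fails to be conjugation symmetric. It says $\Sigma$ is conjugation invariant as a set, with orientation reversed. That is exactly the setting of \cite[Theorem~9.3]{Zhou}, so nothing needs to be deformed.

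The missing observation is that positivity is needed only on $\R$; off $\R$ the Schwarz symmetry alone removes the band contribution. Orient each $\Sigma_j$ upward and the circles counterclockwise. Then the $+$ side of $\Sigma^+$ reflects onto the $+$ side of $\Sigma^-$. For a homogeneous solution, set $H(z)=M(z)M(\bar z)^\dagger$ for $z\in\C^+$. Then
\[
H_+(z)=M_-(z)\,V(z)V(\bar z)^\dagger\,M_-(\bar z)^\dagger,\qquad z\in\Sigma^+ .
\]
Since $V(\bar z)=\sigma_2\overline{V(z)}\sigma_2$, we get $V(\bar z)^\dagger=\sigma_2V(z)^T\sigma_2=\mathrm{adj}\,V(z)=V(z)^{-1}$, using $\det V=1$, which you already checked. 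The same identity holds on the pole circles. So $H$ has no jumps in $\C^+$. With your Step~2 endpoint bounds, its singularities at $z_1,z_2$ are $O(|z-z_j|^{-1/2})$ and hence removable.

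Cauchy's theorem then leaves only the real-axis integral, $\int_\R M_-VM_-^\dagger\,\mathrm{d}z=0$. Add the adjoint. On $\R$, $V$ is Hermitian with determinant $1$ and trace $2+|\rho|^2$, hence positive definite, so $M_-=0$ on $\R$. Then $M_+=M_-V=0$ on $\R$. The jump relations and the connectedness of $\C^\pm\setminus\Sigma^\pm$ propagate this to $M\equiv0$. With this in place, your Steps 1--3 give the solvability claim.

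Separately, Step~4 is not part of the statement. As written it also uses $t$-differentiability of $M$, which your own Step~5 says you do not have.
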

 The proof of Theorem~\ref{thm:direct} follows the strategy of  \cite{GJZZ}, with the main additional work concentrated at the real intersection points of $\Sigma$ and $\R$, where the jump matrices have to be reconciled (see Appendix~\ref{a:jumpconsistensy}). Theorem~\ref{thm:inverse} follows from the same arguments as in \cite{GJZZ}; the proof is in Appendix~\ref{a:proof}. The remainder of this introduction is devoted to Theorem~\ref{thm:solgas} below, which shows that the Riemann--Hilbert problem~\ref{RHP:Mtilde} arises as the large-$N$ limit of an $4N$-soliton Riemann--Hilbert problem, namely, as a soliton gas.
 The novelty with respect to previous derivations is that in the limit $N\to\infty$ the  soliton gas solution is non vanishing  at  $x=\pm \infty$ 
 while in \cite{GGJM} and \cite{GGJMM} the soliton gas solution is non vanishing   only for $x=\infty$ or $x=-\infty$.
  This case was first treated in \cite{GJM} for the modified KdV equation.
 \begin{theorem}[Soliton-gas realization]\label{thm:solgas}
 	Consider RHP~\ref{RHP:Mtilde} in the reflectionless case $\rho=0$, with empty discrete spectrum. Then this Riemann--Hilbert problem arises as the large $N$-limit of the $4N$-soliton Riemann--Hilbert problem~\ref{rhp:2N},
 	 whose eigenvalues in $\C^+$ are equally spaced along the two segments $L_\alpha^+$, $\alpha=1,2$ (see figure~\ref{fig:solgas}),
 	  and whose norming constants prescribed by analytic functions $C(z), D(z)$ vanishing at the intersection points $\{\kappa_1,\kappa_2\}=\Sigma\cap\R$. The reflection coefficients of the limit problem are
 	\[
 	r_1(z) = -\mathrm{i} C(z)\,\Phi_\alpha(z), \qquad r_2(z) = \frac{\mathrm{i} D(z)\,\Phi_\alpha(z)^{-1}}{1 + C(z)D(z)} \quad \text{on } L_\alpha^+.
 	\]
where $\Phi_\alpha=\left(\frac{z-\kappa_\alpha}{z-z_\alpha}\right)^{1/2}.$
	 \end{theorem}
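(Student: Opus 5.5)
We follow the strategy of \cite{GGJM, GJM}: start from the $4N$-soliton Riemann--Hilbert problem~\ref{rhp:2N}, remove the poles by explicit triangular factors supported inside closed contours, and pass to the limit $N\to\infty$ in the resulting jumps. The $4N$ poles in $\C^+$ are equally spaced on $L_\alpha^+$, $\alpha=1,2$, each segment joining $z_\alpha$ to $\kappa_\alpha\in\R$ (two families per segment, weighted by $C$ and $D$). Their conjugates lie on $L_\alpha^-$. The norming constants are $c_j=\frac{|L_\alpha^+|}{N}\,C(z_j)/(2\pi\i)$, and similarly with $D$. Since $\kappa_\alpha\in\R$, the segments touch the real line. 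This is exactly why $C,D$ must vanish at $\kappa_\alpha$: otherwise the pole condensate would accumulate on $\R$ with non-small residues and the limit jump would be singular there.

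\textbf{Step 1: removing the poles.} Surround each $L_\alpha^+$ by a closed contour $\gamma_\alpha^+$ in $\C^+$ that touches $\R$ only at $\kappa_\alpha$, and use the Schwarz reflection for the lower half-plane. Inside $\gamma_\alpha^+$ define
\[
Y = M\begin{pmatrix}1&0\\ -\sum_j \frac{c_j\e^{2\theta(z_j)}}{z-z_j}&1\end{pmatrix}
\]
(and the analogous upper-triangular factor for the $D$-family). Then $Y$ is analytic off $\gamma_\alpha^\pm$. The jump of $Y$ on $\gamma_\alpha^+$ is triangular and contains a Riemann sum $\sum_j \frac{c_j\e^{2\theta(z_j)}}{z-z_j}$. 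Uniformly on compact subsets of $\gamma_\alpha^+\setminus\{\kappa_\alpha\}$, this sum converges to the Cauchy-type integral $\frac{1}{2\pi\i}\int_{L_\alpha^+}\frac{C(s)\e^{2\theta(s)}}{s-z}\d s$, with error $O(1/N)$. Near $\kappa_\alpha$ we instead use the vanishing of $C,D$ to bound the error.

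\textbf{Step 2: collapsing to the bands.} In the limit problem, deform $\gamma_\alpha^+$ back onto $L_\alpha^+$. The Cauchy integral has boundary values differing by $C(z)\e^{2\theta(z)}$ across $L_\alpha^+$, and it is singular only at the endpoints. To factor the jump we write the limiting jump on $L_\alpha^+$ as a product of lower and upper triangular factors with entries $C\e^{2\theta}$ and $D\e^{-2\theta}$. This gives
\[
\begin{pmatrix}1&0\\ C\e^{2\theta}&1\end{pmatrix}\begin{pmatrix}1&D\e^{-2\theta}\\0&1\end{pmatrix},
\]
up to conjugation. The logarithmic endpoint behaviour of the Cauchy integral is removed by the scalar function $\Phi_\alpha=\big(\frac{z-\kappa_\alpha}{z-z_\alpha}\right)^{1/2}$, via a conjugation by $\Phi_\alpha^{\sigma_3}$ near each segment. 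Matching with the jump \eqref{e:V0} then forces
\[
\frac{1-r_1r_2}{1+r_1r_2}=\frac{1}{1+CD}\quad\text{(after normalization)},
\]
with the off-diagonal entries fixing $r_1=-\i C\Phi_\alpha$ and $r_2=\i D\Phi_\alpha^{-1}/(1+CD)$. This is a direct algebraic check: compute $1+r_1r_2 = 1+CD/(1+CD)$ and so on. The lower half-plane is handled by the Schwarz symmetry $V=\sigma_2\ov{V}\sigma_2$.

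\textbf{Step 3: convergence of solutions.} Let $E=Y_N Y_\infty^{-1}$. It solves a small-norm problem once we have three ingredients. First, $Y_\infty$ exists and is bounded; this follows from Theorem~\ref{thm:inverse}, whose proof in Appendix~\ref{a:proof} provides unique solvability in the reflectionless case. Second, the jump differences are $o(1)$ in $L^2\cap L^\infty$ of the contours. Third, the contour geometry is uniform in $N$. Then $u_N(x,t)\to u(x,t)$ locally uniformly in $(x,t)$, via $u=2\i\lim z M_{12}$.

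The main obstacle is the analysis near the real points $\kappa_\alpha$. There, $\gamma_\alpha^\pm$ pinch against $\R$, and the triangular factors for $\C^+$ and $\C^-$ meet. At the same time, the Riemann-sum error and the endpoint singularity of the Cauchy integral interact. I would first try a local estimate: choose $C,D$ vanishing at least linearly at $\kappa_\alpha$, so that the discretization error in a disk of radius $N^{-1/2}$ is $O(N^{-1/2})$ in $L^2$. Outside that disk the standard $O(1/N)$ estimate applies. The consistency of the limiting jumps at $\kappa_\alpha$ would then be verified as in Appendix~\ref{a:jumpconsistensy}.
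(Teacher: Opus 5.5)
There is a genuine gap: nothing in your construction produces $\Phi_\alpha$, and the equal-diagonal band jump \eqref{e:V0} cannot come out of it.

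\textbf{Where your argument breaks.} With Riemann-sum weights $c_j\propto C(z_j)/N$, the pole-removing factors converge to Cauchy transforms of $C\e^{2\theta}$ and $D\e^{-2\theta}$. Their boundary values on $L_\alpha^+$ differ \emph{additively}, so collapsing the contour only gives jumps built from triangular factors. For example, $\begin{psmallmatrix}1&0\\ C\e^{2\theta}&1\end{psmallmatrix}\begin{psmallmatrix}1&D\e^{-2\theta}\\0&1\end{psmallmatrix}$ has diagonal $(1,1+CD)$. Every jump in \eqref{e:V0} instead has two equal diagonal entries $\frac{1-r_1r_2}{1+r_1r_2}$.
\begin{itemize}
\item Conjugating by $\Phi_\alpha^{\sigma_3}$ cannot repair this. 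Even with the sign flip $\Phi_{\alpha,+}=-\Phi_{\alpha,-}$, it multiplies both diagonal entries by the same factor $\pm1$.
\item A square root also cannot cancel logarithmic endpoint singularities. At $\kappa_\alpha$ there are none anyway, since $C(\kappa_\alpha)=0$.
\item Your ``direct algebraic check'' does not come out. For the stated $r_1,r_2$ one gets $r_1r_2=CD/(1+CD)$, hence $\frac{1-r_1r_2}{1+r_1r_2}=\frac{1}{1+2CD}$, not $\frac{1}{1+CD}$.
\item Step 1 already fails at finite $N$ for two interlaced families. The entry $\sum_j c_j\e^{2\theta(z_j)}/(z-z_j)$ of the lower factor does not vanish at the $D$-poles. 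So the first column of $M$ times that factor inherits the pole of $M$'s second column there, and the subsequent upper-triangular factor does not remove it.
\end{itemize}

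\textbf{The missing idea: a multiplicative interpolant.} The paper takes the norming constants \eqref{e:normingconst} from $B(w)=\prod_{\ell=1}^N\frac{w-\ell+1/2}{w-\ell}$. Then $B(Nw_\alpha(z))$ has poles at the $\lambda_j^{(\alpha)}$ and zeros at the $\mu_j^{(\alpha)}$.
\begin{itemize}
\item Both families are removed at once by $\begin{psmallmatrix}1&0\\ CB\e^{2\theta}&1\end{psmallmatrix}\begin{psmallmatrix}1&DB^{-1}\e^{-2\theta}\\0&1\end{psmallmatrix}$. Each off-diagonal entry vanishes at the other family's poles. The factor $1/(1+CD)$ in $\chi_j^{(\alpha)}$ compensates the finite term $CB\e^{2\theta}M_2$ at $\mu_j^{(\alpha)}$.
\item The Gamma-ratio asymptotics give $B(Nw_\alpha)\to\Phi_\alpha$ off $L_\alpha^+$. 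Near $\kappa_\alpha$ this holds in sectors once multiplied by a function vanishing there. This, together with $V_P(\kappa_\alpha)=I$, is where the vanishing of $C,D$ is used.
\item One then gets $\E=PP_\infty^{-1}=I+O(N^{-1/2})$, with $P_\infty$ solvable on the loops by Zhou's vanishing lemma.
\item The band jump comes from setting $W_\infty=P_\infty V_{P_\infty}^{-1}$ inside $\mathcal{C}_\alpha^+$. It equals $V_{P_\infty}(\Phi_{\alpha,-})V_{P_\infty}(\Phi_{\alpha,+})^{-1}$. The sign flip of the square root across $L_\alpha^+$ is exactly what produces the equal diagonal entries $1+2CD$ and the factors $\Phi_\alpha^{\pm1}$ in $r_1,r_2$.
\end{itemize}
Your Step 3, including the $N^{-1/2}$ loss near $\kappa_\alpha$, matches the paper. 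It only becomes meaningful once the limit problem is this $\Phi_\alpha$-problem rather than a Cauchy-transform problem.
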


 
This manuscript is organized as follows. In Section~\ref{sec_elliptic} we review the solution of the Zakharov--Shabat spectral problem for an elliptic travelling wave. In Section~\ref{sec_perturbative} we formulate the direct scattering problem for initial data of the form \eqref{initial_data} by a perturbative argument and derive the Riemann--Hilbert formulation of the inverse problem. In Section~\ref{sec_soliton} we construct the full soliton gas Riemann--Hilbert problem as the large-$N$ limit of a $4N$-soliton system and prove Theorem~\ref{thm:solgas}. The  proof of Theorem~\ref{thm:inverse}, the non-intersecting configuration, and several technical computations are collected in the Appendix.


\section{Genus-one Riemann-Hilbert Problem}
\label{sec_elliptic}

We consider the genus-one Riemann surface
\be\label{RS_X}
\mathcal{X}=\left\{(z,R)\in\mathbb{C}^2: R^2=(z-z_1)(z-\overline{z_1})(z-z_2)(z-\overline{z_2})\right\}\, .
\ee
We model $\mathcal{X}$ as a two sheeted cover of the Riemann sphere. 
We fix a canonical homology basis on $\mathcal{X}$ by choosing the 
$\mathbf{a}$-cycle to be a closed loop on the first sheet encircling 
the branch points $z_1$ and $\ov{z}_1$ counterclockwise, and the 
$\mathbf{b}$-cycle to be an arc starting from $z_2$ on the first sheet, 
passing to $z_1$, and returning to the starting point on the second 
sheet. See Figure~\ref{surf}. 
The oriented branch cuts used to define $R(z)$ as a single-valued function 
on each sheet will be introduced later. 
%

\begin{figure}[t]
	\centering
	\includegraphics[width=0.5
	\linewidth]{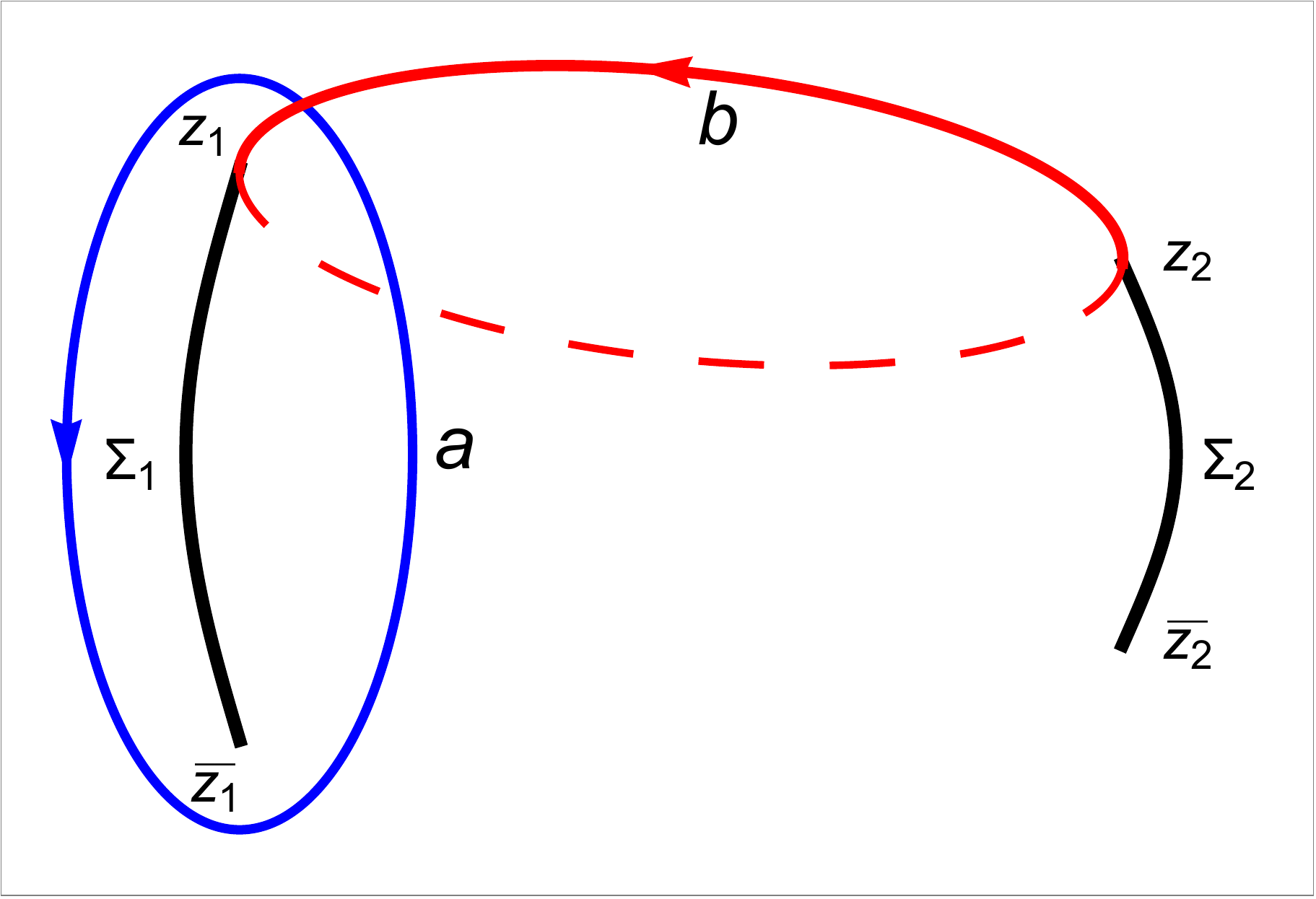}
	\caption{The homology basis for the Riemann surface $\mathcal{X}$ associated with $R^2=(z-z_1)(z-\overline{z_1})(z-z_2)(z-\overline{z_2})$.}
	\label{surf}
\end{figure}

Denote by $\d p$ and $\d q$ the quasi-momentum and quasi-energy differentials, respectively. These are meromorphic differentials of the second kind on $\mathcal{X}$
that are analytic away from the pre-images of $\infty$, where they satisfy 
\begin{align}\label{ap}
	\d p=\pm[1+\mathcal{O}(z^{-2})]\d z,\quad \d q=\pm[2z+\mathcal{O}(z^{-2})]\d z\,, \quad \mbox{as $z\to\infty^\pm$}\,.
\end{align}
and normalized by the condition that 
\begin{align*}
	\oint\limits_{\mathbf{a}}\d p=0\,, \qquad  \oint\limits_{\mathbf{a}}\d q=0\,.
\end{align*}
The above conditions uniquely determine the differentials
\begin{equation}\label{e:quasidiffs}
\begin{gathered}
	\d p = \frac{z^2-\Re(z_1+z_2)z+c_0}{R}\d z\, , \\
	\d q = 2\frac{z^3-\Re(z_1+z_2)z^2+(\Re(z_1)\Re(z_2)+\frac{\Im(z_1)^2+\Im(z_2)^2 }{2})z+c_1}{R}\d z\, ,
\end{gathered}
\end{equation}
where the constant coefficients, computed in \cite{GJZZ}, are given by
\be 
\begin{aligned}
	&	c_0 =\dfrac{1}{2}(|z_1|^2+|z_2|^2)-\frac{1}{2}|z_1-\overline{z_2}|^2\dfrac{E(m)}{K(m)}\, ,\\
	&	c_1= \dfrac{c_0}{2}\Re(z_1+z_2)-\frac{1}{2}(\Re(z_1)|z_2|^2+\Re(z_2)|z_1|^2)\, ,
\end{aligned}
\ee
where $m$ is given in \eqref{e:modulus}. The $\bf{b}$-periods of these differentials, also computed in \cite{GJZZ}, are given by
\be
\label{Omega12}
\Omega_1 :=  \oint_\mathbf{b}\d p= \frac{\pi |z_1- \overline{z_2}|}{K(m)} , \qquad 
\Omega_2 := \oint_\mathbf{b}\d q= - v \Omega_1 \,, \qquad v = -\Re(z_1+z_2).
\ee

%
Choosing $z_2$ as the base point,  
we define the Abelian integrals  $p(z)$ and $q(z)$   by
\be\label{e:defAbelian}
p(z)=\int_{z_2}^z \d p,\quad
q(z)=\int_{z_2}^z \d q.
\ee
According to standard finite-gap theory \cite{Belokolos1994}, expanding these integrals near the point at infinity on the first sheet $z\to\infty^+$ yields:
\be\label{e:pqasymp}
p(z)=z+E+ o(z^{-1}),\quad q(z)=z^2 +N +o(1),
\ee
where 
\begin{equation}\label{e:defEN}
	\begin{split}
		&E:=\lim_{z\to\infty}(p(z)-z)=\lim_{z\to\infty}\int_{z_2}^z[\d p(\lambda)-\d\lambda]-z_2=\dfrac{\int_{\mathbf{a}}\frac{ \lambda \d \lambda}{R}}{\int_{\mathbf{a}}\frac{  \d \lambda}{R}}- \Re(z_1+z_2)\,,\\
		&N:=\lim_{z\to\infty}q(z)-z^2=-vE+\lim_{z\to\infty}(R(z)-z^2-vz)=-vE+\dfrac{v^2}{4}-\dfrac{\omega_0}{2}, 
	\end{split}
\end{equation}
and  $\omega_0=\frac{3}{2}v^2-(z_1(z_2+\overline{z_1}+\overline{z_2})+z_2(\overline{z_1}+\overline{z_2})+\overline{z_1}\overline{z_2})$. Then the fact that $E$ is real follows from the  symmetry  of the curve and the homology basis. 
Writing explicitly $E$  in terms of elliptic integrals we obtain
\begin{equation}
E=-\Re(z_1+z_2)+\overline{z_2}+(\overline{z_1}-\overline{z_2})\dfrac{\Pi(\Lambda,m)}{K(m)}, \qquad \Lambda =\dfrac{z_1-\overline{z_1}}{z_1-\overline{z_2}}
\end{equation}
where $\Pi(n,m)={\displaystyle \int_0^1}\frac{\d s}{(1-n s^2)\sqrt{(1-s^2)(1-ms^2)}}$. 
Let \begin{equation}
	\label{Omega}
	\Omega=\Omega_0+x\Omega_1+t\Omega_2= \Omega_1 ( x - x_0 + \Re( z_1+z_2) t )\,,
\end{equation}
where $\Omega_1$  and $\Omega_2$   have  been defined in \eqref{Omega12}\,,
and $\Omega_0$ is given by 
\begin{equation}
	\label{x0}
	\Omega_0 = - x_0 \Omega_1.
\end{equation}

\begin{figure}[htb]
	\centering
	\begin{overpic}[width=.4\textwidth]{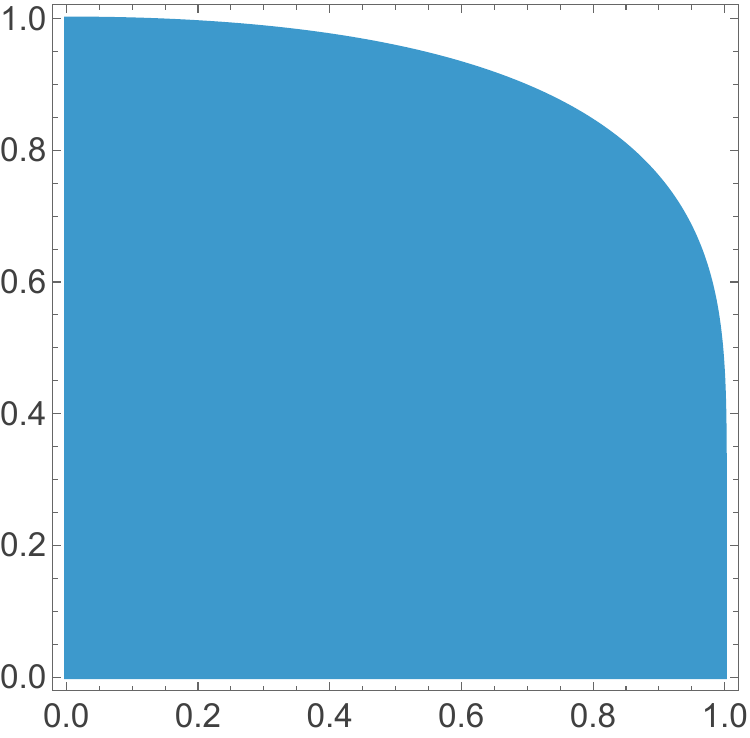}
	\put(51,-3){$m$}
	\put(-3,50){$b$}
	\end{overpic}
	\caption{
	The continuous spectrum of the ZS operator consists of the real axis and arcs $\Sigma_k$, $k=1,2$ connecting $z_k$ to $\overline{z_k}$ across the real axis as shown in Figure~\ref{fig:cuts} when $(m,b)$ is in the shaded subset of $[0,1]^2$. In the unshaded region, the level curve does not cross the real axis; it consists of 
the real axis, an arc connecting $z_1$ to $z_2$, and the complex conjugate arc connecting $\overline{z_1}$ to $\overline{z_2}$. In this paper we consider the shaded region. The unshaded region was studied in \cite{GJZZ}. 
	}
	\label{f:spectral_parameters}
\end{figure}

The continuous spectrum of the background ZS operator consists of the points where $\Im p(z) = 0$. In \cite{GJZZ}, the following facts are established: 
\begin{proposition}\label{prop:sigma}
The existence of two real zeros of the differential $\d p$ defined by \eqref{e:defAbelian} implies that the spectrum of the ZS operator crosses the real line as in Figure~\ref{fig:cuts}.  
The differential $\d p$ has two real zeros if 
\begin{gather}
\label{real_spectrum}
1-m + b - 2\dfrac{E(m)}{K(m)} < 0, 
\end{gather}
where $m$ and $b$ are given by \eqref{e:modulus}.
If the left hand side of \eqref{real_spectrum} is positive, the spectrum of the ZS operator has no real intersections.
The set of values $(m,s)$ where \eqref{real_spectrum} is satisfied is shown in blue in Figure~\ref{f:spectral_parameters}.
\end{proposition}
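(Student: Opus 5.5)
We will prove Proposition~\ref{prop:sigma} by studying the real restriction of the quasi-momentum differential $\d p$ and the level set $\{\Im p=0\}$. By the Galilean and dilation symmetries, and using \eqref{e:modulus}, we may normalize the branch points. We take $\Re(z_1+z_2)=0$ and $|z_1-\overline{z_2}|=1$, so that $v=0$ and $\lambda=1$. The numerator of $\d p$ in \eqref{e:quasidiffs} then becomes $P(z)=z^2+c_0$. Since $c_0$ is real, the zeros of $\d p$ are real precisely when $c_0<0$. Substituting the formula for $c_0$, the condition reads
\[
|z_1|^2+|z_2|^2-\frac{E(m)}{K(m)}<0 .
\]
We write $z_1=-a+\i\beta_1$ and $z_2=a+\i\beta_2$. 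Using $m=1-|z_1-z_2|^2$ (with $\lambda=1$) and $b=(\beta_1+\beta_2)^2$, we express $|z_1|^2+|z_2|^2=2a^2+\beta_1^2+\beta_2^2$ through $m$ and $b$. Because $|z_1-z_2|^2=4a^2+(\beta_1-\beta_2)^2=1-m$, we obtain $2(|z_1|^2+|z_2|^2)=(1-m)+b$. The condition $c_0<0$ is therefore exactly \eqref{real_spectrum}. This algebraic reduction is routine.

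The substantive step is to show that real zeros $\xi_1<\xi_2$ of $\d p$ force the level curve $\{\Im p=0\}$ to cross $\R$ as in Figure~\ref{fig:cuts}, and that when there are no real zeros it does not cross. On $\R$, $R(z)$ is real and nonvanishing, and $P$ is real, so $\d p$ is real along the real line. Hence $\Im p$ is constant on each interval of $\R$ where $p$ is analytic. The normalization $\oint_{\mathbf a}\d p=0$ together with Schwarz symmetry shows that $\Im p$ vanishes on the whole of $\R$ once the cuts are placed away from it, since $\R$ belongs to the continuous spectrum. A local analysis then gives the picture near each branch point: since $\d p\sim (z-z_k)^{-1/2}\d z$ there, a single arc of $\{\Im p=0\}$ emanates from $z_k$. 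At a real zero $\xi$ of $\d p$ we have $\Im p\equiv0$ along $\R$, and $p$ is locally quadratic, so the zero set near $\xi$ consists of $\R$ together with a vertical arc through $\xi$. By Schwarz symmetry this vertical arc is invariant under conjugation.

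We then follow the arcs globally. $\Im p$ is harmonic off the cuts and grows like $\Im z$ at infinity, so the level curves cannot form closed loops or escape to infinity except along $\R$. The arc leaving $z_1$ must therefore end either at another branch point or at a critical point. When $c_0<0$, the two vertical arcs through $\xi_1$ and $\xi_2$ have to terminate at $\{z_1,\overline{z_1}\}$ and $\{z_2,\overline{z_2}\}$ respectively, by a counting argument on the finitely many arc endpoints. This produces $\Sigma_1$ and $\Sigma_2$ crossing $\R$. When $c_0>0$, there is no real critical point. An arc meeting $\R$ transversally would then require $\d p=0$ there, so the arcs cannot touch $\R$ and must join $z_1$ to $z_2$ in $\C^+$. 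I expect the main obstacle to be this global topology argument, in particular excluding arcs from $z_1$ that end at a non-real critical point. The first thing I would try is the maximum principle for $\Im p$ on the regions bounded by $\R$ and the arcs, combined with the fact that $\d p$ has only the two zeros $\pm\sqrt{-c_0}$. I would also cite \cite{GJZZ} for the complementary case.
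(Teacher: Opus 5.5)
The paper does not prove this proposition; it quotes it from \cite{GJZZ}, so your argument has to stand on its own. The algebraic half does. The reduction to $c_0<0$ is correct; without normalizing, the discriminant of the numerator of $\d p$ is $\Re(z_1+z_2)^2-4c_0=-|z_1-\overline{z_2}|^2\bigl(1-m+b-2E(m)/K(m)\bigr)$. Your local pictures at $z_k$ and at a simple real zero $\xi$ are also right.

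The global step, however, is not yet a proof, and two of its assertions do not follow from what you cite. First, growth of $\Im p$ at infinity does not exclude closed level curves, since a loop may encircle branch points. Second, for $c_0<0$, counting endpoints does not exclude the configuration in which the arc leaving $\xi_1$ upward ends at $\xi_2$ while the arc from $z_1$ ends at $z_2$; that configuration has exactly the same endpoint count.

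The missing ingredient is a single lemma: \emph{the zero set $Z$ of $\Im p$ has no bounded complementary component.} To prove it, note that all periods of $\d p$ are real: $\oint_{\mathbf a}\d p=0$, $\oint_{\mathbf b}\d p=\Omega_1$, and $\d p$ has zero residue at $\infty^\pm$ by \eqref{ap}. So $\Im p$ is a single-valued harmonic function on $\mathcal X\setminus\{\infty^\pm\}$, odd under the sheet involution. Hence it vanishes at the branch points, and $Z$ does not depend on the choice of cuts. A bounded face $F$ of $Z$ contains no branch point and has a relatively compact preimage in $\mathcal X\setminus\{\infty^\pm\}$ on whose boundary $\Im p=0$. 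The maximum principle then gives $\Im p\equiv0$ on $F$, a contradiction.

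With the lemma, the case $c_0<0$ closes at once. In $\C^+$ there are no critical points, and near a non-critical real point $Z$ coincides with $\R$. So $Z\cap\C^+$ has exactly four arc-ends, one at each of $z_1,z_2,\xi_1,\xi_2$. The only pairing that creates no bounded face matches $\{z_1,z_2\}$ with $\{\xi_1,\xi_2\}$.

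The obstacle you flag for $c_0>0$ is not needed for the proposition as stated. ``No real intersections'' already follows from your observation that $Z\setminus\R$ can only reach $\R$ at a real zero of $\d p$. If you also want the arc from $z_1$ to $z_2$, the same lemma gives it. Were the critical point $w_0\in\C^+$ on $Z$, it would carry four arc-ends, at most two of which can end at $z_1,z_2$. The remaining ones would have to close into a loop through $w_0$ bounding a face, contradicting the lemma.

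Finally, your reason for $\Im p=0$ on $\R$ (``since $\R$ belongs to the continuous spectrum'') is circular. The actual reason is Schwarz symmetry, which gives $p(x)-\overline{p(x)}=p(\overline{z_2})$ for real $x$. Moreover $p(\overline{z_2})=0$, because $p_++p_-\equiv0$ along a cut joining $\overline{z_2}$ to $z_2$ and $p(z_2)=0$.
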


Throughout the main text of this paper we will assume that branch points $z_1$ and $z_2$ satisfy \eqref{real_spectrum}. 
The case where \eqref{real_spectrum} is not satisfied is considered in Appendix~\ref{s:disjoint}.

We now define the set 
\begin{equation}
	\Sigma := \{ z \in \mathbb{C} \setminus \mathbb{R} 
	\mid \operatorname{Im}(p(z) - E) = 0 \},
\end{equation}
which, because \eqref{real_spectrum} is assumed to be satisfied,  consists of two arcs $\Sigma_1$ and $\Sigma_2$ connecting  $\bar{z}_j$ to $z_j$ for $j = 1, 2$, respectively. See Figure~\ref{fig:cuts}.
We orient each $\Sigma_j$ upward (from $\bar{z}_j$ to $z_j$) and define the sub contours 
\begin{equation}
	\Sigma_j^+ := \Sigma_j \cap \mathbb{C}^+, \qquad 
	\Sigma_j^- := \Sigma_j \cap \mathbb{C}^-,
\end{equation}
and set $\Sigma^\pm := \Sigma_1^\pm \cup \Sigma_2^\pm$. Clearly, $\Sigma_j = \overline{\Sigma_j^+ \cup \Sigma_j^-}$, and $\Sigma := \Sigma^+ \cup \Sigma^-$. 
We model the Riemann surface $\mathcal{X}$ defined by \eqref{RS_X} as a two-sheeted cover of $\C$ cut and glued along $\Sigma$. 
These arcs serve as the branch cuts of the function  $R(z) = \sqrt{(z-z_1)(z-\bar{z}_1)(z-z_2)(z-\bar{z}_2)}$ single-valued on each sheet of 
$\mathcal{X}$, with the sign convention $z^{-2} R(z) \to 1 $ as $z \to \infty$ on the first sheet. 

We define  the  following  Riemann--Hilbert problem.
\begin{RHP}\label{RHP:O}
Find a $2\times2$ matrix valued function $O(z;x,t)$, depending parametrically on $(x,t) \in \R^2$, such that:
	\begin{enumerate}
		\item $O(z;x,t)$ is analytic in $\mathbb{C}\setminus\Sigma$.
		\item $O(z;x,t)=I+\mathcal{O}(z^{-1})$ as $z\to\infty$.
		\item $O(z;x,t)$ satisfies the jump conditions $O_+(z;x,t)=O_-(z;x,t)V^{(O)}(z;x,t)$, where
		\be\label{e:Ojump}
		V^{(O)}(z;x,t)=
		\begin{cases}
			\begin{pmatrix}
				0 & \mathrm{i}\,\e^{2\mathrm{i}\,((x-x_0)E+tN+\varphi_0)}\\
				\mathrm{i}\,\e^{-2\mathrm{i}\,((x-x_0)E+tN+\varphi_0)} & 0
			\end{pmatrix}\,, & z\in \Sigma_2\,,\\
				\begin{pmatrix}
					0 & \mathrm{i} \, \e^{-\mathrm{i}\Omega} \e^{2\mathrm{i}\,((x-x_0)E+tN+\varphi_0)}\\
					\mathrm{i} \, \e^{\mathrm{i}\Omega} \e^{-2\mathrm{i}\,((x-x_0)E+tN+\varphi_0)} &0
				\end{pmatrix}
				\,, & z\in\Sigma_1\,.
		\end{cases}
		\ee
		\item $O(z;x,t)$ admits fourth root singularity at $z\in\{z_1, \overline{z_1},z_2,\overline{z_2}\}$.
		\item $O$ satisfies the symmetry: $\sigma_2 O^*(z)\sigma_2=O(z)$, where $O^*(z;x,t)=\overline{O(\ov{z};x,t)}$, and 
		$
		\sigma_2=\begin{psmallmatrix}
		0 & -\mathrm{i}\\
		\mathrm{i} &0
		\end{psmallmatrix}.
		$
	\end{enumerate}
\end{RHP}
To construct the solution of  RHP \ref{RHP:O}, we introduce the holomorphic differential $\omega$  given explicitly by
\begin{align}\label{e:h.diff}
	\omega= \left( \oint_\mathbf{a} \frac{\d z}{R(z)} \right)^{-1} \frac{\d z}{\,R(z)} = -\i \frac{|z_1-\overline{z_2}|}{4 K(m)} \frac{\d z}{\, R(z)} \,.
\end{align}
We also define the period ratio
\be
\tau=\oint_\mathbf{b}\omega=\i\dfrac{K(1-m)}{K(m)}\,,
\ee
where $m$ was defined in \eqref{e:modulus}.
Next we introduce the Jacobi theta function, \begin{align}
	\theta_3(z;\tau)=\sum_{n\in\mathbb{Z}}\e^{2\i\pi nz+\i\pi n^2\tau},\quad z\in\mathbb{C},
\end{align}
that  satisfies the periodicity relations
\begin{align}
	\theta_3(z+h+k\tau;\tau)=\e^{-\i\pi k^2\tau-2\i\pi kz}\theta_3(z;\tau),\quad h,k\in\mathbb{Z}.
\end{align}
We also recall that the Jacobi elliptic function  vanishes on the
half period $\frac{\tau}{2}+\frac{1}{2}$.
Using $\omega$, we define the Abel integral with base point $z_2$
\begin{align}\label{Abel}
	A(z)=\int_{z_2}^{z}\omega,
\end{align}
where the path of integration avoids all the arcs $\Sigma_1\cup \Sigma_2$. We observe that
\begin{align}
	\quad A(z_{1,+})=\frac{\tau}{2},\quad A(\overline{z}_{1,+})=\frac{1}{2}+\frac{\tau}{2},\quad A(\overline{z_2})=\frac{1}{2}.
\end{align}
In addition, the following jump relations hold across the cuts:
\begin{align}
	& A(z_+)+A(z_-)=0,\quad z\in\Sigma_2\,,\\
	& A(z_+)+A(z_-)=\tau,\quad z\in\Sigma_1\,,
\end{align}
where here and below $A(z_\pm)$ denote the boundary values of $A(z)$ as $z$ approaches the oriented curves $\Sigma_1\cup\Sigma_2$ from the left ($+$) and right ($-$) sides of the orientation.

\begin{proposition}\label{prop:Osol}
RHP \ref{RHP:O} has a unique solution which can be expressed explicitly in the form:
\begin{align}\label{defO}
	O(z;x,t) = 
	\e^{\i((x-x_0)E + t N+\varphi_0)\sigma_3}
	\begin{bmatrix}
		\frac{\gamma+\gamma^{-1}}{2} H_{11}(z) & \frac{\gamma-\gamma^{-1}}{2} H_{12}(z) \\
		\frac{\gamma-\gamma^{-1}}{2} H_{21}(z) & \frac{\gamma+\gamma^{-1}}{2} H_{22}(z)
	\end{bmatrix}
	\e^{-\i((x-x_0)E + t N+\varphi_0)\sigma_3}\, ,
\end{align}
where $\gamma= \gamma(z)$ is the analytic function in $\C\setminus\Sigma$ defined by
\be\label{e:gamma}
\gamma(z)=\left(\frac{(z-z_1)(z-z_2)}{(z-\overline{z_1})(z-\overline{z_2})}\right)^{1/4},
\ee
normalized such that $\gamma(z)\to1$ as $z\to\infty$, and satisfying the jump condition
\be
\gamma_+(z)=\mathrm{i}\gamma_-(z),\qquad z\in\Sigma_1\cup\Sigma_2\,.
\ee
The matrix entries $H_{i j}$ $(i,j=1,2)$  are expressed as
\bse\label{e:H.theta}
\begin{gather}
	H_{11}(z) = \frac{\theta_4(0)\theta_4(A(z)-A(\infty)-\frac{\Omega}{2\pi})}{\theta_4(\frac{\Omega}{2\pi})\theta_4(A(z)-A(\infty))}, \quad
	H_{12}(z) = \frac{\theta_4(0) \theta_4(A(z)+A(\infty)+\frac{\Omega}{2\pi})}{\theta_4(\frac{\Omega}{2\pi}) \theta_4(A(z)+A(\infty))}
	\\
	H_{21}(z) = \frac{\theta_4(0)\theta_4(A(z)+A(\infty)-\frac{\Omega}{2\pi})}{\theta_4(\frac{\Omega}{2\pi})\theta_4(A(z)+A(\infty))}
	, \quad
	H_{22}(z) = \frac{\theta_4(0)\theta_4(A(z)-A(\infty)+\frac{\Omega}{2\pi})}{\theta_4(\frac{\Omega}{2\pi})\theta_4(A(z)-A(\infty))}.
\end{gather}
\ese
Here $\theta_4(z; \tau) = \theta_3(z+\tfrac{1}{2}; \tau)$ is another Jacobi theta function with a half-period shift relative to $\theta_3(z; \tau)$.

\end{proposition}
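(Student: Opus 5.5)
We verify directly that the explicit formula \eqref{defO} satisfies every condition of RHP~\ref{RHP:O}, and then prove uniqueness by a Liouville argument. The outer conjugation by $\e^{\i((x-x_0)E+tN+\varphi_0)\sigma_3}$ produces exactly the phase factors $\e^{\pm2\i((x-x_0)E+tN+\varphi_0)}$ in the off-diagonal jumps \eqref{e:Ojump}. It therefore suffices to show that the inner matrix, call it $\widehat O$, has jump $\begin{psmallmatrix}0&\i\\ \i&0\end{psmallmatrix}$ on $\Sigma_2$ and $\begin{psmallmatrix}0&\i\e^{-\i\Omega}\\ \i\e^{\i\Omega}&0\end{psmallmatrix}$ on $\Sigma_1$.

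\textbf{Jumps of the scalar factors.} From $\gamma_+=\i\gamma_-$ we get
\[
(\gamma+\gamma^{-1})_+=\i(\gamma-\gamma^{-1})_-, \qquad (\gamma-\gamma^{-1})_+=\i(\gamma+\gamma^{-1})_- .
\]
Hence the $\gamma$-prefactors exchange diagonal and off-diagonal roles and pick up a factor $\i$, which is exactly what right multiplication by an off-diagonal matrix requires. For the theta quotients, on $\Sigma_2$ we have $A_+=-A_-$. Since $\theta_4$ is even, this gives $H_{11,+}=H_{12,-}$, $H_{12,+}=H_{11,-}$, and likewise $H_{21}\leftrightarrow H_{22}$. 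On $\Sigma_1$ we have $A_+=\tau-A_-$. Using evenness together with the quasi-periodicity $\theta_4(w+\tau)=-\e^{-\i\pi\tau-2\i\pi w}\theta_4(w)$, which follows from the $\theta_3$ relation with the half shift, the ratio of numerator and denominator exponentials leaves a factor $\e^{\mp\i\Omega}$ in the appropriate entries. Checking each entry gives precisely $\widehat O_+=\widehat O_-\begin{psmallmatrix}0&\i\e^{-\i\Omega}\\ \i\e^{\i\Omega}&0\end{psmallmatrix}$.

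\textbf{Analyticity and normalization.} The main obstacle is showing that the formula has no spurious poles. The denominators $\theta_4(A(z)\mp A(\infty))$ vanish wherever $A(z)\mp A(\infty)\equiv\tau/2 \pmod{\text{lattice}}$. By the Riemann vanishing theorem each such function has exactly one zero on $\mathcal X$, located at a point $P^\pm$ whose projection is a zero of $\gamma\mp\gamma^{-1}$ on the correct sheet. Equivalently, $\gamma^4=1$ forces $(z-z_1)(z-z_2)=(z-\bar z_1)(z-\bar z_2)$, which has a single solution $z^*$, the root of $(\gamma^2-\gamma^{-2})$. One then checks, using the values $A(z_{1,+})=\tau/2$, $A(\overline{z_2})=1/2$ and the symmetry of the curve, that the theta zero sits exactly at $z^*$ on the sheet where the corresponding prefactor $\gamma\pm\gamma^{-1}$ vanishes. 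Thus the singularities cancel, and $O$ is analytic off $\Sigma$. I expect this bookkeeping, namely matching the zero of the theta divisor with the zero of $\gamma\pm\gamma^{-1}$ and fixing the correct sheet, to be the delicate step, and I would follow the analogous computation in \cite{GJZZ}. We also need $\theta_4(\Omega/2\pi)\neq0$, which holds because $\Omega$ is real and $\theta_4$ has no real zeros for purely imaginary $\tau$.

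At infinity, $\gamma\to1$, so the off-diagonal prefactors vanish and $H_{11}(\infty)=H_{22}(\infty)=1$, which gives $O=I+\mathcal O(z^{-1})$. Near the branch points the factor $\gamma^{\pm1}$ yields the $|z-z_j|^{-1/4}$ behaviour. The symmetry $\sigma_2O^*\sigma_2=O$ follows from $\overline{\gamma(\bar z)}=\gamma(z)^{-1}$, together with reality of $\Omega$ and $E,N$ and the conjugation symmetry $\overline{A(\bar z)}$ relating to $A(z)$, which is inherited from the symmetric homology basis.

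\textbf{Uniqueness.} The jump matrices have determinant $1$, so $\det O$ has no jumps. Its singularities are at most $|z-z_j|^{-1/2}$, hence removable, and $\det O\to1$ at infinity, so $\det O\equiv1$. If $\tilde O$ is a second solution, then $\tilde O O^{-1}$ has no jumps and singularities of order at most $|z-z_j|^{-1/2}$, hence is entire. It tends to $I$ at infinity, so by Liouville's theorem $\tilde O O^{-1}\equiv I$, i.e.\ $\tilde O=O$.
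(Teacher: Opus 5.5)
Your proposal is correct. It is the natural direct-verification-plus-Liouville argument: the paper gives no separate proof of this proposition, only the ingredients you use (the jump relations of $A$, the zero of $\theta_3$ at $\tfrac12+\tfrac\tau2$, and the branch-point values of $A$), and defers to \cite{GJZZ}. The pole-cancellation step you leave open closes quickly by Abel's theorem applied to the degree-two functions $\gamma^2\mp1$ on $\mathcal X$, whose divisors are $\infty^\pm+Q^{(\mp)}-\overline{z_1}-\overline{z_2}$: since $A(\overline{z_1})+A(\overline{z_2})\equiv\tau/2$, one gets $A(Q^{(\mp)})\equiv\tau/2\mp A(\infty)$, and by injectivity of the genus-one Abel map (no vanishing theorem needed) $\theta_4(A(z)-A(\infty))$ vanishes only at the zero of $\gamma+\gamma^{-1}$ and $\theta_4(A(z)+A(\infty))$ only at that of $\gamma-\gamma^{-1}$ --- i.e.\ the pairing is $\theta_4(A\mp A(\infty))\leftrightarrow\gamma\pm\gamma^{-1}$, opposite to the signs in your $P^\pm$ sentence.
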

From these  matrices  we can construct the solution of the ZS linear spectral problem \eqref{e:zs} for an elliptic travelling wave potential $u_0$ as follows. 
\begin{proposition}\label{prop:W0}
Let $O(z;x,t)$ be the matrix defined in \eqref{defO}, and $p(z)$ and $q(z)$ be the Abelian integrals  introduced in \eqref{e:defAbelian}. Define the matrix-valued function 
	\be\label{defWbg}
W_e(x,t;z)=O(z;x,t)\e^{-\mathrm{i}\left((x-x_0)(p(z)-E)+t(q(z)-N)\right)\sigma_3}\,.
	\ee
Then $W_e(x,t;z)$ is a simultaneous solution to the Lax pair \eqref{e:NLSLP} associated with the elliptic traveling wave potential 
\be
	\label{elliptic_general}
	u_e(x,t; x_0,\varphi_0)=\Im(z_2+z_1) \frac{\theta_4(0) \theta_4(2A(\infty)+\frac{\Omega}{2\pi})}{\theta_4(\frac{\Omega}{2\pi}) \theta_4(2A(\infty))}\e^{2\mathrm{i}((x-x_0)E+tN+\varphi_0)},
\ee
where the phase $\Omega=\Omega_1 ( x - x_0 + \Re( z_1+z_2) t )$ with $\Omega_1$ given by \eqref{Omega12}, the real parameters $E $, $N$,  and the spatial shift $x_0$ are specified in  \eqref{e:defEN} and \eqref{x0}, respectively.
The expression \eqref{elliptic_general} for the elliptic potential solution of NLS can be reduced to the Jacobi elliptic formula \eqref{e:elliptic0}.
\end{proposition}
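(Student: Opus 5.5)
The statement to prove is Proposition~\ref{prop:W0}: $W_e$ solves both equations of the Lax pair \eqref{e:NLSLP}, with potential \eqref{elliptic_general}. The plan is the standard dressing (Liouville-type) argument. First we show that $W_e$ has no jumps. Then we show that $\Psi_x\Psi^{-1}$ and $\Psi_t\Psi^{-1}$ are polynomials in $z$ of the right degree, which forces the Zakharov--Shabat form.

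\emph{Step 1: $W_e$ is entire.} Set $\Psi(z)=W_e(x,t;z)$. Across $\Sigma_2$ the Abelian integrals satisfy $p_++p_-=0$ and $q_++q_-=0$. Across $\Sigma_1$ they satisfy $p_++p_-=\Omega_1$ and $q_++q_-=\Omega_2$, with a sign fixed by the homology convention. The constant shifts $E,N$ combine with these relations so that
\[
\e^{-\i((x-x_0)(p_--E)+t(q_--N))\sigma_3}\,V^{(O)}\,\e^{\i((x-x_0)(p_+-E)+t(q_+-N))\sigma_3}=I .
\]
We check this by direct multiplication of off-diagonal matrices. The phase $\e^{\pm\i\Omega}$ on $\Sigma_1$ is absorbed because $(x-x_0)\Omega_1+t\Omega_2=\Omega+t\Omega_1\Re(z_1+z_2)\cdot 0$; this is consistent with \eqref{Omega}, with $\Omega_2=-v\Omega_1$. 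This identity is the main bookkeeping point, and I expect that fixing signs and orientation conventions is the main obstacle. Hence $\Psi_+=\Psi_-$ on $\Sigma$. At the branch points $O$ has at most $|z-z_j|^{-1/4}$ singularities, and the exponential factor is bounded there. So the singularities are removable, and $\Psi$ is analytic in $\C$ for each fixed $(x,t)$. We also have $\det\Psi\equiv1$, because $\det O$ has no jump, is bounded except for integrable singularities, and tends to $1$ at infinity.

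\emph{Step 2: Lax form.} Since $\Sigma$ and the jumps of $\Psi$ do not depend on $x,t$ (they are trivial), $\Psi_x\Psi^{-1}$ is entire in $z$. As $z\to\infty$ we use \eqref{e:pqasymp}: $p-E=z+o(z^{-1})$ and $q-N=z^2+o(1)$. Writing $O=I+O_1/z+O_2/z^2+\cdots$ gives
\[
\Psi_x\Psi^{-1}=-\i z\sigma_3+\i[\sigma_3,O_1]+\mathcal O(z^{-1}).
\]
By Liouville's theorem it equals $-\i z\sigma_3+\mathcal U$ with $\mathcal U=\i[\sigma_3,O_1]$, which is off-diagonal. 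Similarly $\Psi_t\Psi^{-1}$ is a quadratic polynomial $-\i z^2\sigma_3+zQ_1+Q_0$. Its coefficients are determined by $O_1,O_2$, and they reduce to $\mathcal B$ in \eqref{e:NLSLP2} by the standard identities obtained from comparing orders in $\Psi_x=\mathcal L\Psi$, namely $(O_1)_x$ expressed through $\mathcal U$. The Schwarz symmetry $\sigma_2O^*\sigma_2=O$, together with the corresponding symmetry of $p$ and $q$, gives $\mathcal U=\begin{psmallmatrix}0&u\\-\bar u&0\end{psmallmatrix}$ with $u=2\i(O_1)_{12}$. Compatibility of the two equations then gives the NLS equation for $u$.

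\emph{Step 3: the formula for $u$.} We expand \eqref{defO} at infinity. Since $\gamma-\gamma^{-1}=\frac{1}{2z}\big(\overline{z_1}+\overline{z_2}-z_1-z_2\big)+\mathcal O(z^{-2})=-\i\,\Im(z_1+z_2)/z+\cdots$, we get
\[
(O_1)_{12}=-\tfrac{\i}{2}\Im(z_1+z_2)\,H_{12}(\infty)\,\e^{2\i((x-x_0)E+tN+\varphi_0)} .
\]
Substituting $H_{12}(\infty)$ from \eqref{e:H.theta} and multiplying by $2\i$ yields \eqref{elliptic_general}. The reduction to \eqref{e:elliptic0} is a classical theta-to-Jacobi identity: $|u|^2$ is expressed via $\theta_4$ quotients as $\lambda^2(b-m\,\sn^2)$, and the phase is matched via $E,N$ and \eqref{e:modulus}. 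We would cite \cite{GJZZ} for it rather than redo it.
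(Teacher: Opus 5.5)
Your overall route is the paper's: a dressing/Liouville argument on $\partial_xW_e\,W_e^{-1}$ and $\partial_tW_e\,W_e^{-1}$, then reading off $u_e=2\mathrm{i}(O_1)_{12}$ from $\gamma-\gamma^{-1}\sim-\mathrm{i}\Im(z_1+z_2)/z$ and $H_{12}(\infty)$. Your Steps 2 and 3 are correct, but Step 1 is false.

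With $g=(x-x_0)(p-E)+t(q-N)$ one has $W_{e,-}^{-1}W_{e,+}=\mathrm{e}^{\mathrm{i}g_-\sigma_3}V^{(O)}\mathrm{e}^{-\mathrm{i}g_+\sigma_3}$; your exponents have the opposite signs. A product of the form diagonal--off-diagonal--diagonal is off-diagonal, so it can never equal $I$. On $\Sigma_2$ one has $g_++g_-=-2((x-x_0)E+tN)$. On $\Sigma_1$ one has $g_++g_-=\Omega-2((x-x_0)E+tN)$, because $(x-x_0)\Omega_1+t\Omega_2=\Omega$. Using these, the jump on both bands is the constant matrix
\[
W_{e,+}=W_{e,-}\,\mathrm{i}\,\mathrm{e}^{2\mathrm{i}\varphi_0\sigma_3}\sigma_1 ,
\]
which is exactly what the paper computes. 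So $W_e$ is \emph{not} entire. Its columns are exchanged across $\Sigma$, and the monodromy around each of $z_1,\overline{z_1},z_2,\overline{z_2}$ is nontrivial. The quarter-root singularities are therefore genuine branch points, not removable singularities as you claim.

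The repair is the one the paper uses: the dressing argument only needs the jump to be independent of $x$ and $t$, not trivial. Then $\partial_xW_e\,W_e^{-1}$ and $\partial_tW_e\,W_e^{-1}$ have no jump across $\Sigma$. Near each branch point, $W_e$, $\partial_xW_e$, $\partial_tW_e$ and $W_e^{-1}$ are all $\mathcal{O}(|z-z_j|^{-1/4})$, for three reasons:
\begin{itemize}
\item $\det W_e\equiv1$, so $W_e^{-1}$ is the adjugate; your argument for this is fine, since $\det V^{(O)}=1$.
\item The $x,t$-dependence of $O$ enters only through theta functions, with $\theta_4(\Omega/2\pi)\neq0$ for real $\Omega$.
\item $p-E$ and $q-N$ are bounded near the branch points.
\end{itemize}
Hence these products have isolated singularities of order at most $|z-z_j|^{-1/2}$, which are removable, so they are entire. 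From there your Liouville argument in Step 2 and your computation of $u_e$ in Step 3 go through unchanged.
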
	
The proof of Proposition~\ref{prop:W0} follows the argument in \cite{GJZZ} with one modification: with the choice of branch cuts made in this paper, the Abelian integrals satisfy $p(z_{+})+p(z_{-})=0$ and $q(z_{+})+q(z_{-})=0$ for $z\in\Sigma_2$; while $p(z_{+})+p(z_{-})=\Omega_1$ and $q(z_{+})+q(z_{-})=\Omega_2$ for $z\in\Sigma_1$. 
The jump condition for $W_e(x,t ;z)$ is then given by 
	\begin{align}\label{e:jumpW0}
		\begin{split}
			W_e(x,t;z_+)&=W_e(x,t;z_-)\e^{\i\left((x-x_0)(p(z_-)-E)+t(q(z_-)-N)\right)\sigma_3}V^{(O)} \e^{-\i\left((x-x_0)(p(z_+)-E)+t(q(z_+)-N)\right)\sigma_3}\\
			&=W_0(z_-)
				\i \e^{2\i \varphi_0\sigma_3}\sigma_1, \quad z \in \Sigma_1\cup \Sigma_2.				
					\end{split}
	\end{align}
	The jump of $W_e$ is independent of $z$, $x$ and $t$. A standard dressing argument considering the entire functions $(\partial_x W_e) W_e^{-1}$ and $(\partial_t W_e) W_e^{-1}$ 
	shows that $W_e(x,t;z)$ is a simultaneous solution of the Lax pair \eqref{e:NLSLP}. Formula \eqref{elliptic_general} follows from recovering the corresponding solution of \eqref{e:nls} from the formula
	$u_e(x,t;x_0,\varphi_0) = \lim_{z \to \infty} 2\i z (W_e)_{12}(z;x,t) = \lim_{z \to \infty} 2\i z O_{12}(z;x,t)$.

\section{Perturbative Argument}
\label{sec_perturbative}
Consider the ZS  problem \eqref{e:zs} for a steplike elliptic initial data $u_0$ of the form \eqref{initial_data}.  
Our goal is to determine the  Jost solutions   $W^r(x;z)$ and $W^\ell(x;z)$ of \eqref{e:zs} satisfying the specific boundary conditions:
\bse \label{e:Wpmasy}
\begin{align}
 W^r(x;z)&=O^r(x,0;z)(I+\mathcal{O}(x^{-1}))\e^{-\i (x-x_0^r)(p(z)-E)\sigma_{3}}, \quad \mbox{as $x\to  \infty$},\\
 W^{\ell}(x;z)&=O^\ell(x,0;z)(I+\mathcal{O}(x^{-1}))\e^{-\i (x-x_0^{\ell})(p(z)-E)\sigma_{3}}, \quad \mbox{as $x\to-\infty$}.
 \end{align}
 \ese
To factor out the asymptotic oscillations, we introduce the modified Jost solutions  $m^s(x;z)$ for the ZS equation \eqref{e:zs} through the transformation:
\begin{align}\label{e:defm}
W^s(x;z)=O^s(z;x,0) m^s(x;z)\e^{-\i (x-x_0^s)(p(z)-E)\sigma_{3}},\quad s\in\{\ell,r\}\,.
\end{align}
Note that the matrix-valued function $O^s(x, z)$ is uniformly boundeded with respect to $x$ for any spectral parameter $z \in \mathbb{C}\setminus\partial \Sigma$. 
It is convenient to introduce the perturbation matrices $\Delta U^s(x)$, which measure the deviation of the potential $u_0(x)$ from the background $u_e^s(x)$:
\begin{align*}
	\Delta U^s(x):=\begin{pmatrix}0 & u_0(x)-u^s_{e}(x) \\ -\left(\overline{u_0(x)}-\overline{u^s_{e}(x)}\right) & 0\end{pmatrix}\,,\qquad s\in\{\ell,r\}\,.
\end{align*}
\begin{lemma} 
Assume that $W^s(x;z)$ is the solution to the ZS spectral problem \eqref{e:zs} associated with the potential $u_0(x)$ .
Then the modified Jost function $m^s(x;z)$ defined in  \eqref{e:defm} satisfies the following ODE
\begin{align}\label{e:ODEm}
\partial_{x} m^s=-\mathrm{i} (p(z)-E)\left[\sigma_{3}, m^s\right]+O^s(z;x,0)^{-1} \Delta U^s(x) O^s(z;x,0) m^s, 
\end{align}
where $[a,b]=ab-ba$ denotes the matrix commutator.
\end{lemma}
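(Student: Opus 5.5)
This is a direct computation: substitute the factorization \eqref{e:defm} into the ZS equation \eqref{e:zs} with potential $u_0$. We then use that $W_e^s(x;z)=O^s(z;x,0)\e^{-\i(x-x_0^s)(p(z)-E)\sigma_3}$ solves the same equation with the background potential $u_e^s$. That fact is Proposition~\ref{prop:W0} evaluated at $t=0$ with phase parameters $(x_0^s,\varphi_0^s)$.

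Write $\mathcal{L}(u_0,z)=\mathcal{L}(u_e^s,z)+\Delta U^s(x)$, which holds because $\mathcal{L}$ is affine in the potential. Set $\Theta(x)=(x-x_0^s)(p(z)-E)$, so that $\partial_x\e^{-\i\Theta\sigma_3}=-\i(p(z)-E)\sigma_3\e^{-\i\Theta\sigma_3}$. From $\partial_x W_e^s=\mathcal{L}(u_e^s,z)W_e^s$ and the definition of $W_e^s$ we get
\[
\partial_x O^s=\mathcal{L}(u_e^s,z)O^s+\i(p(z)-E)O^s\sigma_3 .
\]

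Now differentiate $W^s=O^s m^s \e^{-\i\Theta\sigma_3}$ with the product rule:
\[
\partial_x W^s=\big(\partial_x O^s\, m^s + O^s\partial_x m^s - \i(p-E)O^s m^s\sigma_3\big)\e^{-\i\Theta\sigma_3}.
\]
Substitute the expression for $\partial_x O^s$ and equate the result with $(\mathcal{L}(u_e^s,z)+\Delta U^s)O^s m^s\e^{-\i\Theta\sigma_3}$. The terms $\mathcal{L}(u_e^s,z)O^s m^s$ cancel on both sides. Multiplying on the right by $\e^{\i\Theta\sigma_3}$ and on the left by $(O^s)^{-1}$ then gives
\[
\partial_x m^s=-\i(p-E)(\sigma_3 m^s-m^s\sigma_3)+(O^s)^{-1}\Delta U^s O^s m^s ,
\]
which is exactly \eqref{e:ODEm}.

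The only point that needs care is that $O^s$ is invertible for $z\notin\partial\Sigma$. For this we check that $\det O^s\equiv1$. The jump matrices \eqref{e:Ojump} have determinant $1$, so $\det O^s$ has no jump across $\Sigma$. Its singularities at the branch points are at worst $|z-z_k|^{-1/2}$, because $O$ has fourth-root singularities there, so they are removable. Hence $\det O^s$ is entire, and it tends to $1$ at infinity, so by Liouville $\det O^s\equiv1$. With invertibility in hand the rest is the routine algebra above.
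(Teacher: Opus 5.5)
Your proof is correct and is the direct substitution the paper has in mind; the paper omits its proof and defers to Lemma~3.1 of \cite{GJZZ}. The steps are to split $\mathcal{L}(u_0,z)=\mathcal{L}(u_e^s,z)+\Delta U^s$, use that $O^s(z;x,0)\e^{-\i(x-x_0^s)(p(z)-E)\sigma_3}$ solves the background ZS equation (Proposition~\ref{prop:W0} at $t=0$), and cancel the common terms. Your Liouville argument giving $\det O^s\equiv 1$ adds something useful, since it justifies the use of $(O^s)^{-1}$ that the paper leaves implicit.
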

The proof closely parallels that of Lemma 3.1 in \cite{GJZZ} and is thus omitted for brevity. 
Integrating the differential equation \eqref{e:ODEm}  yields the corresponding Volterra integral equation:
\be\label{e:Intm}
m^s(x;z)=I+\int_{\infty^s}^{x} \e^{-\i (x-y) (p(z)-E) \sigma_{3}} O^s(y;z)^{-1} \Delta U^s(y)O^s(y;z) m^s(y;z)\e^{\i (x-y) (p(z)-E) \sigma_{3}}  \d y\,,
\ee
where $\infty^\ell$ and $\infty^r$ denote $-\infty$ and $+\infty$, respectively.

We now recall the following property of the Abelian integral $p(z)$ of the quasi-momentum differential $\d p$, established in \cite{GJZZ}. It will be used below to prove the analyticity and boundedness properties of $m^s(x;z)$ and $W^s(x;z)$.
\begin{lemma}
	\label{dp_inequality}
	For any choice of branch points $z_1, z_2 \in \C^+$ with $z_1 \neq z_2$, the Abelian integral $p(z)$ defined by \eqref{e:defAbelian} satisfies the inequalities
	\be\label{e:imP}
	\begin{cases}
		\Im (p(z)-E)>0\,, & z\in \mathbb{C}^+\setminus\Sigma^+\,,\\
		\Im (p(z)-E)<0 \,, & z\in \mathbb{C}^{-}\setminus\Sigma^-\,.\\
	\end{cases}
	\ee
\end{lemma}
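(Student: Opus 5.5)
We argue via the harmonic function $h(z):=\Im(p(z)-E)$ on $\C\setminus(\Sigma\cup\R)$ and a maximum/minimum principle, reducing by the Schwarz symmetry to $\C^+$. Since $E$ is real and the curve and homology basis are invariant under $z\mapsto\bar z$, the differential satisfies $\overline{\d p(\bar z)}=\d p(z)$, so $\overline{p(\bar z)}-E=-(p(z)-E)$ modulo the real period $\Omega_1$ coming from the normalization $\oint_{\mathbf a}\d p=0$. Hence $h(\bar z)=-h(z)$, and it suffices to prove $h>0$ on $\C^+\setminus\Sigma^+$.

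First we record the boundary behaviour of $h$ on $\partial(\C^+\setminus\Sigma^+)$. By definition $\Sigma$ is exactly the zero set of $h$ off $\R$, so $h=0$ on both sides of $\Sigma^+$. By the expansion \eqref{e:pqasymp}, $p(z)=z+E+o(1)$ on the first sheet, so $h(z)=\Im z+o(1)\to+\infty$ as $z\to\infty$ in $\C^+$. On $\R$, the antisymmetry $h(\bar z)=-h(z)$ together with continuity across the real segments not meeting $\Sigma$ gives $h=0$ there. One must still check that $p$ has no jump across $\R$ away from $\xi_1,\xi_2$; this holds because the only cuts are $\Sigma_1,\Sigma_2$. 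Thus $h$ is harmonic in each component of $\C^+\setminus\Sigma^+$, vanishes on its finite boundary, and is positive near infinity.

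The region $\C^+\setminus\Sigma^+$ splits into the unbounded component $D_\infty$ and the bounded pockets $D_k$ enclosed by $\Sigma_k^+$ and the real segments. The pockets exist only if $\Sigma_k^+$ leaves and returns to $\R$; with the upward orientation in Figure~\ref{fig:cuts}, $\Sigma_k^+$ runs from $\xi_k$ to $z_k$, and there are no pockets. So $\C^+\setminus\Sigma^+$ is connected, and we show that $h$ has no zeros inside it.

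Suppose $h(z^*)=0$ for some $z^*\in\C^+\setminus\Sigma^+$. Then $z^*$ would belong to the level set $\{\Im(p-E)=0\}$, which by the definition of $\Sigma$ and Proposition~\ref{prop:sigma} consists only of $\R\cup\Sigma$ off the branch points. To make this airtight we count critical points. The level set $\{h=0\}$ near a regular point is a smooth arc. A component of $\{h=0\}$ in $\C^+$ disjoint from $\Sigma^+\cup\R$ would bound, together with $\R\cup\Sigma^+$, a region on whose boundary $h=0$, forcing $h\equiv0$ by the maximum principle unless the region is unbounded. An unbounded such region would contradict $h\sim\Im z$. Any zero of $h$ must therefore lie on a branch of the level set emanating from a critical point of $h$, that is, a zero of $\d p$. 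The numerator of $\d p$ in \eqref{e:quasidiffs} is quadratic, and under \eqref{real_spectrum} both of its zeros are real; by Proposition~\ref{prop:sigma} they are the points $\xi_1,\xi_2$ where $\Sigma$ crosses $\R$. So $\d p$ has no zeros in $\C^+$, and $h$ has no critical points there. Hence $h$ has constant sign on the connected set $\C^+\setminus\Sigma^+$, and that sign is positive because of the behaviour at infinity.

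The main obstacle is this topological step: showing that the zero level set of $h$ in $\C^+$ is exactly $\Sigma^+$, with no extra closed curves or arcs. The approach above combines the maximum principle with the absence of critical points of $p$ in $\C^+$, which in turn comes from Proposition~\ref{prop:sigma}. For the non-crossing configuration the same argument applies with $\Sigma^+$ the arc from $z_1$ to $z_2$, and the complex zeros of $\d p$ then lie on that arc, as established in \cite{GJZZ}.
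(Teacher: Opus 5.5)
The paper does not prove this lemma; it quotes it from \cite{GJZZ}, so your argument has to stand on its own. Your setup is right: $h=\Im(p-E)$ is harmonic off $\Sigma$, odd under $z\mapsto\bar z$, zero on $\R$ and on both sides of $\Sigma^+$, and $\approx\Im z$ at infinity. The gap is in the step you call the main obstacle. ``$h$ has no critical points in $\C^+$, hence constant sign'' is not a valid inference. The claim ``every zero of $h$ lies on a branch emanating from a critical point'' also needs local analysis you do not supply:
\begin{itemize}
\item at regular points of $\R$;
\item at interior points of $\Sigma^+$, where $h$ is not harmonic across the cut, so you must pass to the continuation $\tilde p$ of $p_+$ (with $p=-\tilde p+\text{real const}$ on the other side);
\item at the branch points $z_k$, where $h\approx\Im(c\sqrt{z-z_k})$ has a single zero ray;
\item at $\xi_k$, where you must check that the four zero arcs of the saddle are exactly $\R$ and $\Sigma_k$.
\end{itemize}
None of this is needed. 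Set $D_R=(\C^+\setminus\Sigma^+)\cap\{|z|<R\}$. There $h=0$ on $\R\cup\Sigma^+$, and $p(z)=z+E+\mathcal{O}(z^{-1})$ gives $h\ge\Im z-C/R\ge -C/R$ on $|z|=R$. The minimum principle and $R\to\infty$ give $h\ge0$. The strong minimum principle then gives $h>0$, because $h\not\equiv0$ on any component. This uses neither the connectedness of $\C^+\setminus\Sigma^+$ nor anything about the zeros of $\d p$. As a by-product it shows that the zero set of $h$ in $\C^+$ is exactly $\Sigma^+$, which is what your critical-point count was aiming at. With the paper's definition of $\Sigma$ as the full zero set of $h$ off $\R$, non-vanishing on $\C^+\setminus\Sigma^+$ is tautological anyway; only the sign needs proof.

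Your closing remark on the non-crossing configuration is false, and it matters because the lemma is stated for all $z_1\neq z_2$. A zero $w$ of $\d p$ cannot lie in the interior of $\Sigma^+$. At such a point $\tilde p(z)-\tilde p(w)\sim a(z-w)^2$ with $\tilde p(w)$ real, so $\Im\tilde p$ has four sign-alternating sectors at $w$. An extra zero arc of $h$ then crosses $\Sigma^+$, contradicting the very inequality you are proving. For example, take $z_k=\i\eta_k$ with $0<\eta_1<\eta_2$. The $\mathbf a$-normalization forces $0<c_0<\eta_1^2$, and the zero $\i\sqrt{c_0}$ lies in the gap between $0$ and $z_1$, where $h>0$. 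So your ``no critical points in $\C^+$'' mechanism is unavailable there. The minimum-principle argument goes through unchanged, since all periods of $\d p$ are real and $\Im p$ is therefore single valued and harmonic off the bands.

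Two smaller slips:
\begin{itemize}
\item $\overline{\d p(\bar z)}=\d p(z)$ gives $\overline{p(\bar z)}=p(z)$ up to a real constant, not $-(p(z)-E)$. As written, your relation would give $h(\bar z)=h(z)$, although your conclusion $h(\bar z)=-h(z)$ is correct.
\item $h=\Im z+o(1)$ does not give $h\to+\infty$ near $\R$. What you actually need is the $\mathcal{O}(|z|^{-1})$ lower bound on large semicircles used above.
\end{itemize}
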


In what follows $W_i^{s}(x; z)$ denotes the $i$-th column of $W^s(x; z)$. The main properties of the Jost solutions $W^{s}(x; z)$ are summarized in the following proposition.
\begin{proposition}\label{propm}
Suppose $u_0-u^\ell_e \in L^1(\Real^-)$  and $u -u^r_e \in L^1(\Real^+)$. Then $W^s(x;z)$, $s \in \{ \ell, r \}$, have the following properties:
\begin{enumerate}
	\item For every $z_\pm \in \R \cup \operatorname{int}(\Sigma)$\footnote{Here $z_\pm \in \operatorname{int}(\Sigma)$ denotes $z$ as a left/right boundary values on the interior of $\Sigma^s$.}, and each $x_*\in\Real$, there exist unique solutions $W^\ell(\, \cdot\, ;z)\in L^\infty(-\infty,x_*)$ and $W^r(\, \cdot\, ;z)\in L^\infty(x_*,\infty)$ of \eqref{e:zs} given by \eqref{e:defm}, where $m^s(x;z)$, $s\in\{\ell,r\}$, are solutions of the integral equations \eqref{e:Intm}. 
	Moreover, for each fixed $x\in \R$, the columns $W_1^{\ell}(x;z)$ and $W_2^r (x;z)$  are analytic for $z\in\mathbb{C}^+\setminus\Sigma^{+}$; 
	 $W_1^r(x;z)$ and $W_2^{\ell}(x;z)$ are analytic for $z\in\mathbb{C}^-\setminus\Sigma^{-}$.
	\item For $s\in\{\ell,r\}$ and $z \in \Sigma^s$, the boundary values $W^s(x;z_\pm)$ satisfy 
	\begin{equation}\label{jumpW}
		W^s(x;z_+)= W^s(x;z_-)\e^{\mathrm{i} \varphi_0^s \sigma_3} (\mathrm{i} \sigma_1),\quad z\in\Sigma_1\cup\Sigma_2\,.
	\end{equation}
	\item The column vectors satisfy the symmetries
	\begin{equation}\label{schwarzW}
	\begin{aligned}
		&W_1^\ell(x;z) = \mathrm{i} \sigma_2 \overline{W_2^\ell(x; \bar{z})}, \quad
		&&W_2^r(x;z) = - \mathrm{i} \sigma_2 \overline{W_1^r(x; \bar{z})}, \qquad
		& z \in \C^+ \setminus \Sigma^+, \\
		&W_1^r(x;z) =  \mathrm{i} \sigma_2 \overline{W_2^r(x; \bar{z})}, 
		&&W_2^\ell(x;z) = - \mathrm{i} \sigma_2 \overline{W_1^\ell(x; \bar{z})}, 
		& z \in \C^- \setminus \Sigma^- ,\\
	\end{aligned}
	\end{equation}
	with continuous extensions to the boundary $\R \cup \Sigma$. 
\end{enumerate}
\end{proposition}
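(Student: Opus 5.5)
The plan is to derive everything from the Volterra equation \eqref{e:Intm} for $m^s$ and transfer the results to $W^s$ through \eqref{e:defm}. I treat $s=r$ in detail; the case $s=\ell$ is identical with $+\infty$ replaced by $-\infty$ and the inequalities of Lemma~\ref{dp_inequality} used with the opposite sign.

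\textbf{Step 1: column structure of the kernel.} Write the equation column by column. For the first column $m^r_1$, the conjugation $\e^{-\i(x-y)(p-E)\sigma_3}(\cdot)\e^{\i(x-y)(p-E)\sigma_3}$ multiplies the $(2,1)$ entry by $\e^{2\i(x-y)(p(z)-E)}$ and leaves the diagonal untouched. For $y>x$ this factor has modulus $\e^{2(y-x)\Im(p-E)}$, which is bounded by $1$ when $\Im(p-E)\le 0$. By Lemma~\ref{dp_inequality} this holds on $\overline{\C^-}\setminus\Sigma^-$.

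By the same computation, $m^r_2$ is controlled on $\overline{\C^+}\setminus\Sigma^+$. For $s=\ell$ the integration runs over $y<x$, which swaps the half-planes. This gives exactly the analyticity domains listed in part 1.

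\textbf{Step 2: bounding the conjugated perturbation.} The kernel is $O^s(y;z)^{-1}\Delta U^s(y)O^s(y;z)$. We have $\det O^s\equiv 1$: the determinant has no jumps (each jump matrix has determinant $1$), its singularities are at most fourth-root, and it tends to $1$ at infinity. Since $O^s$ is quasi-periodic in $y$ through the phases $\Omega$ and $E$, both $O^s$ and $(O^s)^{-1}$ are bounded uniformly in $y$ for $z$ in compact subsets away from the branch points. Hence the kernel is bounded by $C(z)|u_0-u^s_e|(y)\in L^1$.

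The standard Neumann series then gives a unique bounded solution, with $\|m^s_j\|\le \exp(C\|u_0-u^s_e\|_{L^1})$. Uniform convergence on compacts gives analyticity in the interior and continuity up to $\R\cup\operatorname{int}\Sigma$ from each side. Uniqueness in $L^\infty$ follows from Gronwall's inequality. Boundedness of $W^s$ on the half-line follows because $O^s$ is bounded and $|\e^{\mp\i(x-x^s_0)(p-E)}|$ is bounded on the relevant side.

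\textbf{Step 3: the jump \eqref{jumpW}.} This is the step I expect to be the main obstacle. On $\Sigma$ both $p_\pm$ are real up to $E$, so the full matrices $m^s_\pm$ exist. Take $W^s(x;z_-)\,\e^{\i\varphi_0^s\sigma_3}(\i\sigma_1)$. It solves \eqref{e:zs}, since right multiplication by a constant matrix preserves solutions. I would then show it has the same asymptotics as $W^s(x;z_+)$ and invoke uniqueness from Step 1.

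For this, use $O_+=O_-V^{(O)}$ together with $p_+ + p_- = 0$ on $\Sigma_2$ (respectively $=\Omega_1$ on $\Sigma_1$), exactly as in the computation \eqref{e:jumpW0}. This shows $O_+\e^{-\i(x-x_0)(p_+-E)\sigma_3}=O_-\e^{-\i(x-x_0)(p_--E)\sigma_3}\,\i\e^{\i\varphi_0\sigma_3}\sigma_1$. Conjugating $m^s_-$ by $\sigma_1$ and the diagonal phase, one checks that $\sigma_1\e^{-\i\varphi_0\sigma_3}m^s_-\e^{\i\varphi_0\sigma_3}\sigma_1$ satisfies the Volterra equation with $z_+$ data. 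Both sides therefore satisfy the same integral equation, and they coincide.

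The delicate bookkeeping is the swap $p_\pm\to -p_\mp$, which must interchange the two columns consistently; the consistency is checked via the $\Omega$ shift on $\Sigma_1$. Care is also needed near the branch points and near $\Sigma\cap\R$, where continuity rather than analyticity is what is claimed.

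\textbf{Step 4: symmetries \eqref{schwarzW}.} The ZS matrix satisfies $\overline{\mathcal L(u,\bar z)}=\sigma_2\mathcal L(u,z)\sigma_2$. Hence $\sigma_2\overline{W^s(x;\bar z)}\sigma_2$ solves \eqref{e:zs}. Using the symmetry $\sigma_2O^*\sigma_2=O$ from RHP~\ref{RHP:O} and $\overline{p(\bar z)}=p(z)$ (with $E$ real), its normalization at $\infty^s$ matches that of $W^s(x;z)$. By uniqueness of the bounded column solutions, we get $W^s(x;z)=\sigma_2\overline{W^s(x;\bar z)}\sigma_2$.

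Reading this column-wise gives $W^s_1(z)=\i\sigma_2\overline{W^s_2(\bar z)}$ and $W^s_2(z)=-\i\sigma_2\overline{W^s_1(\bar z)}$. The continuous extensions to $\R\cup\Sigma$ follow from Step 2.
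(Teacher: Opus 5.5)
Your overall route is the standard one the paper relies on (it gives no separate proof here and defers to \cite{GJZZ}): a column-wise analysis of \eqref{e:Intm} using Lemma~\ref{dp_inequality}, $\det O\equiv1$ and uniform bounds on $O^{\pm1}$, followed by a uniqueness argument to transfer the background jump \eqref{e:jumpW0} and the Schwarz symmetry. Steps 1, 2 and 4 are fine. The genuine problem is the identity at the heart of Step 3.

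On $\Sigma_2$ you have $p_--E=-(p_+-E)-2E$, not $-(p_+-E)$. The kernel of \eqref{e:Intm} at $y$ is
\[
O_+(y)^{-1}\Delta U^s O_+(y)=V^{(O)}(y)^{-1}\,O_-(y)^{-1}\Delta U^s O_-(y)\,V^{(O)}(y),
\]
with $V^{(O)}(y)=\i\e^{2\i\psi(y)\sigma_3}\sigma_1$ and $\psi(y)=(y-x_0^s)E+\varphi_0^s$. Carrying both through the equation, the phase $2(x-y)E$ from the exponentials combines with $2\psi(y)$ into $2\psi(x)$. The matrix that actually solves the $z_+$ Volterra equation is
\[
m^s_+(x;z)=\e^{2\i\psi(x)\sigma_3}\,\sigma_1\, m^s_-(x;z)\,\sigma_1\,\e^{-2\i\psi(x)\sigma_3},
\]
with $2\psi(x)$ replaced by $2\psi(x)-\Omega(x)$ on $\Sigma_1$. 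The conjugating phase is $x$-dependent. Your matrix $\sigma_1\e^{-\i\varphi_0\sigma_3}m^s_-\e^{\i\varphi_0\sigma_3}\sigma_1$ differs from it by $x$-dependent unimodular factors in the off-diagonal entries. It therefore does not satisfy the $z_+$ equation in general, and ``both sides satisfy the same integral equation'' fails as written. The ``swap $p_\pm\to-p_\mp$'' bookkeeping you describe misses exactly this $2E$ shift.

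The repair is easy either way.
\begin{itemize}
\item Use the correct conjugation above. Equivalently, pass to $\hat m=\e^{-\i\psi\sigma_3}m\,\e^{\i\psi\sigma_3}$, for which the relation is simply $\hat m_+=\sigma_1\hat m_-\sigma_1$.
\item Or skip the Volterra check. Your background identity gives $W^s(x;z_-)J=O_+(I+o(1))\e^{-\i\phi_+\sigma_3}$, where $\phi_+=(x-x_0^s)(p_+-E)$ is real. Write $W^s(x;z_-)J=W^s(x;z_+)C$ with $C$ constant. Then $\e^{-\i\phi_+\sigma_3}C\e^{\i\phi_+\sigma_3}\to I$, which forces $C=I$. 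You need this short argument on this route, because Step 2 only gives uniqueness for the Volterra equation, not for solutions of \eqref{e:zs} with prescribed asymptotics.
\end{itemize}

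Finally, with $V^{(O)}$ as written, the constant in your background identity comes out as $J=\i\e^{2\i\varphi_0\sigma_3}\sigma_1$ (cf.\ \eqref{e:jumpW0}), not $\i\e^{\i\varphi_0\sigma_3}\sigma_1$. This factor of $2$ comes from the statement's normalization of $\varphi_0^s$ and is not your error.
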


\subsection{Symmetry of solutions and continuous spectral data}
For $z\in\mathbb{R}\cup\Sigma$, $W^{\ell}(z)$  and  $W^r(z)$ are fundamental  matrix solutions of the ZS equation \eqref{e:zs}. Therefore, there exists a scattering matrix
\be\label{e:S}
S(z)=\begin{pmatrix}a(z) & -b^*(z) \\ b(z) & a^*(z)\end{pmatrix}, \quad z \in \mathbb{R};\qquad
S(z_{\pm})=\begin{pmatrix}a(z_{\pm}) & -b^*(z_{\pm}) \\ b(z_{\pm}) & a^*(z_{\pm})\end{pmatrix}, \quad z_{\pm} \in \Sigma\,,
\ee
such that 
\bse
\begin{gather}
W^{\ell}(x; z)=W^r(x; z) S(z),\quad z \in \mathbb{R}, \\
\label{e:S.Sigma}
W^{\ell}(x ; z_\pm)=W^r(x ; z_\pm) S(z_\pm),\quad z\in\Sigma.
\end{gather}
\ese
The analyticity and large-$z$ behavior of the scattering coefficients are obtained in the same way as in our previous paper~\cite{GJZZ}. Indeed, these properties depend only on the Volterra equations for the modified Jost functions $m^s(x;z)$ and on the decay assumptions on the perturbation, and are not affected by the fact that the spectral bands $\Sigma$ intersect the real axis. We recall the result for completeness.
\begin{theorem}\label{theorem1}
Suppose that $u_0 -u_e^\ell \in L^1(\Real^-)$ and $u_0 -u_e^r \in L^1(\Real^+)$. Then the scattering data $a(z)$ and $b(z)$ defined in \eqref{e:S} satisfy the following properties:
    \begin{enumerate}
        \item The scattering coefficients can be expressed in terms of $W^\pm(x;z)$ as
 \be \label{e:scoefs}
         a(z)=\det[W_1^{\ell}(x;z), W_2^r(x;z)],\qquad b(z)=\det[W^r_1(x;z),W^{\ell}_1(x;z)]\,.
        \ee
        It follows that $a(z)$ is analytic for $\,$ $\C^+\setminus\Sigma^+$. Moreover, both $a(z)$ and $b(z)$ have at worst square  root singularities at $\{z_1, z_2, \overline{z_1}, \overline{z_2}\}$.

        \item If   $u_0 -u_e^\ell \in \mathcal{W}^{1,1}(\Real^-)$ and $u_0-u_e^r \in \mathcal{W}^{1,1}(\Real^+)$, then for $z\in\overline{\C^+}$,
        \bse \label{e:scat.asympt}
        \be
         \label{a+asympt}
         \lim_{z\to\infty}a(z)\e^{-\mathrm{i} (x_0^{\ell} -x_0^r)z}=1+\mathcal{O}(z^{-1}),
         \ee
         \ese
         Moreover, if $u_0-u_e^\ell \in \mathcal{W}^{4,1}(\Real^-)$ and $u_0 -u_e^r \in \mathcal{W}^{4,1}(\Real^+)$, then
         for $z\in\Real$,
        \begin{align}\label{b+asympt}
         b(z)\e^{-\mathrm{i}(x_0^{\ell} -x_0^r)z}=\mathcal{O}(z^{-4}), \qquad |z|\to\infty.
         \end{align}
         \end{enumerate}
        \end{theorem}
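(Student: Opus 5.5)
The plan is to read everything off the Wronskian representation \eqref{e:scoefs} and the Volterra equations \eqref{e:Intm}. For the first item I use $a(z)=\det[W_1^\ell,W_2^r]$, whose determinant is independent of $x$ because $\mathcal L$ is traceless. Proposition~\ref{propm} makes $W_1^\ell$ and $W_2^r$ analytic in $\C^+\setminus\Sigma^+$, so $a$ is analytic there as well. For $b(z)=\det[W_1^r,W_1^\ell]$ the columns come from opposite half-planes, so $b$ is only defined on $\R\cup\Sigma$ as a boundary value, and its continuity there follows from the continuous extensions in Proposition~\ref{propm}. To get the square-root singularities at the branch points, I write $W^s=O^s m^s\e^{-\i(x-x_0^s)(p-E)\sigma_3}$. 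The factor $O^s$ has fourth-root singularities through $\gamma^{\pm1}$ (Proposition~\ref{prop:Osol}). The kernel $(O^s)^{-1}\Delta U^s O^s$ in \eqref{e:Intm} is therefore $\mathcal O(|z-z_k|^{-1/2})$, while $p-E$ stays bounded near $z_k$. A Neumann-series bound then gives $m^s=\mathcal O(|z-z_k|^{-1/2})$ at worst. Each column of $W^s$ carries one factor of $O^s$ times $m^s$, so the determinant picks up at most a square-root singularity. I expect this can in fact be sharpened by evaluating $\det$ at a convenient $x$, for instance $x\to\pm\infty$, where $m^s\to I$.

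For the asymptotics \eqref{a+asympt} I evaluate $a$ at large $|z|$. From \eqref{e:pqasymp}, $p(z)-E=z+o(1)$, $\gamma\to1$, and $O^s(z;x,0)\to \e^{\i(\cdot)\sigma_3}\cdot \mathrm{diag}(H_{11},H_{22})\cdot\e^{-\i(\cdot)\sigma_3}=I+\mathcal O(z^{-1})$. Because $\Im(p-E)>0$ in $\C^+$ (Lemma~\ref{dp_inequality}), the Volterra kernels for the columns $m_1^\ell$ and $m_2^r$ are bounded by $|\Delta U^s|\,\|O^s\|\,\|(O^s)^{-1}\|$. One integration by parts of the oscillatory off-diagonal term, which uses $\Delta U^s\in\mathcal W^{1,1}$, gives $m_1^\ell=e_1+\mathcal O(z^{-1})$ and $m_2^r=e_2+\mathcal O(z^{-1})$ uniformly in $x$. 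Substituting into the determinant, $\det O^s=1$ and the exponentials combine to $\e^{-\i(x-x_0^\ell)(p-E)}\e^{\i(x-x_0^r)(p-E)}=\e^{\i(x_0^\ell-x_0^r)(p-E)}$. This equals $\e^{\i(x_0^\ell-x_0^r)z}(1+\mathcal O(z^{-1}))$ provided $p(z)-E-z=o(z^{-1})$, which I take from \eqref{e:pqasymp}, where the error is indeed $o(z^{-1})$. The residual $O^\ell$ versus $O^r$ mismatch (different $x_0,\varphi_0$) only enters through $(O^r)^{-1}O^\ell=I+\mathcal O(z^{-1})$. That yields \eqref{a+asympt}.

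For \eqref{b+asympt} on $\R$, I write $b$ via the off-diagonal entry of $m^\ell$ at $x\to+\infty$, or equivalently as an integral representation
\[
b(z)\e^{-\i(x_0^\ell-x_0^r)z}\sim\int \e^{2\i y(p(z)-E)}\,[\text{entries of }(O)^{-1}\Delta U\,O\,m]\,\d y ,
\]
split at $y=0$ into the $\ell$ and $r$ pieces. I then integrate by parts four times, using $\Delta U^s\in\mathcal W^{4,1}$ together with the smoothness in $y$ of $O^s(y;z)$ (theta functions) and of $m^s$ (through \eqref{e:ODEm}); each step gains a factor $(p-E)^{-1}\sim z^{-1}$.

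The main obstacle is this fourfold integration by parts. First, $y$-derivatives of $m^s$ produce terms like $(p-E)[\sigma_3,m^s]$ that do not decay, so I will iterate on the oscillatory remainders, as in the zero-background proof of \cite{BC}. Second, the boundary terms at $y=0$ from the left and right pieces must cancel, or else combine into the $O^\ell$/$O^r$ mismatch, which is $\mathcal O(z^{-1})$ and must be pushed to $\mathcal O(z^{-4})$ by continuing the expansion of $O^s$ in $z^{-1}$. Since the paper indicates these properties are inherited from \cite{GJZZ} independently of the band geometry, I will follow that argument, checking only that the real crossing points $\xi_k$ of $\Sigma$ lie at finite $z$ and so do not affect large-$|z|$ estimates.
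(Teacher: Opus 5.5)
Your analyticity argument and your large-$z$ analysis of $a$ are the standard Wronskian/Volterra argument, which the paper simply defers to \cite{GJZZ}; they are fine. The genuine gap is the branch-point claim.

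With a kernel of size $|z-z_k|^{-1/2}|\Delta U^s(y)|$, the Neumann series for \eqref{e:Intm} only gives $|m^s|\le \exp\bigl(C|z-z_k|^{-1/2}\|\Delta U^s\|_{L^1}\bigr)$, not a power bound. Even granting $m^s=\mathcal{O}(|z-z_k|^{-1/2})$, each column of $W^s=O^s m^s\e^{-\i(x-x_0^s)(p-E)\sigma_3}$ would be $\mathcal{O}(|z-z_k|^{-3/4})$ and the Wronskians $\mathcal{O}(|z-z_k|^{-3/2})$. So ``at worst square root'' does not follow from your own estimates. Letting $x\to\pm\infty$ does not help: $m^\ell\to I$ only at $-\infty$, $m^r\to I$ only at $+\infty$, and neither limit is uniform near $z_k$.

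The missing idea is to keep the $\gamma^{\pm1}$ factors out of the kernel. For $n^s:=O^s m^s=W^s\e^{\i(x-x_0^s)(p-E)\sigma_3}$ one has, for example,
\[
n_1^\ell(x)=O_1^\ell(z;x,0)+\int_{-\infty}^{x}W_e^\ell(x,0;z)\,W_e^\ell(y,0;z)^{-1}\,\e^{\i(x-y)(p(z)-E)}\,\Delta U^\ell(y)\,n_1^\ell(y)\,\d y .
\]
Here the background transfer matrix $W_e^\ell(x,0;z)W_e^\ell(y,0;z)^{-1}$ is entire in $z$. Hence the $(z-z_k)^{-1/2}$ singularities of $O^\ell(x)\,\mathrm{diag}\bigl(1,\e^{2\i(x-y)(p-E)}\bigr)\,O^\ell(y)^{-1}$ cancel. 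The kernel is then bounded near $z_k$, up to the at most linear growth in $x-y$ that occurs at a band edge. Controlling that growth is where some decay of $|y|\,|\Delta U^s(y)|$ is needed; a first-moment bound suffices. Only the free term is then singular, so $W^s=\mathcal{O}(|z-z_k|^{-1/4})$, which is what the paper uses when it multiplies by $\gamma$ in the proof of Proposition~\ref{p:hep}. Consequently $a,b=\mathcal{O}(|z-z_k|^{-1/2})$.

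For \eqref{b+asympt}, the mechanism you propose at $y=0$ cannot work. At fixed $x$, the $(1,2)$ entry of $(O^r)^{-1}O^\ell-I$ is $\bigl(u_e^\ell(x)-u_e^r(x)\bigr)/(2\i z)+\mathcal{O}(z^{-2})$. This mismatch is genuinely of order $z^{-1}$, and no further expansion of $O^s$ pushes it to $\mathcal{O}(z^{-4})$. It must instead be cancelled exactly, order by order, by the boundary terms at $y=0$ coming from $\Delta U^\ell(0^-)$, $\Delta U^r(0^+)$ and their derivatives. That cancellation is precisely the continuity of $u_0$ and its first few derivatives at the splitting point.

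The two half-line conditions do not give you this. Take $u_0=u_e^\ell$ on $\R^-$ and $u_0=u_e^r$ on $\R^+$: it satisfies them, yet $|b(z)|\sim|u_e^\ell(0)-u_e^r(0)|/(2|z|)$ on $\R$ whenever $u_e^\ell(0)\neq u_e^r(0)$. So you must invoke regularity of $u_0$ across $0$ explicitly. The cleanest way is to compare the large-$z$ (WKB) expansions of $W_1^r(0;z)$ and $W_1^\ell(0;z)$. These are determined by the local jets of $u_0$ at $0$ and agree up to scalar factors through order $z^{-3}$, so their Wronskian $b$ is $\mathcal{O}(z^{-4})$ on $\R$.
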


The next proposition records the jump relations satisfied by the scattering coefficients across the bands $\Sigma$. These relations follow directly from the jump conditions of the Jost solutions and will be used below to construct the scalar function $h$ and the factorization of $a(z)$.
\begin{proposition}\label{S}
    Let $u_0 -u_e^\ell \in L^1(\Real^-)$, $u_0 -u_e^r \in L^1(\Real^+)$,  and $a(z)$ and $b(z)$ be the scattering data in \eqref{e:S}, then
for $z \in \Sigma$, the scattering data satisfy the jump relations
    \begin{equation}\label{jumpS}
        \begin{bmatrix}
           a(z_+) & -b^*(z_+) \\ b(z_+) & a^*(z_+)
        \end{bmatrix}
        =
      \begin{bmatrix}
      	a^*(z_-)\e^{\mathrm{i}(\varphi_0^r-\varphi_0^\ell)} & 
      	b(z_-)\e^{\mathrm{i}(\varphi_0^r+\varphi_0^\ell)} \\ 
      	-b^*(z_-)\e^{-\mathrm{i}(\varphi_0^r+\varphi_0^\ell)} & 
      	a(z_-)\e^{-\mathrm{i}(\varphi_0^r-\varphi_0^\ell)}
      \end{bmatrix}, \quad z\in \Sigma.
    \end{equation}
\end{proposition}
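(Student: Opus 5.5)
The plan is to derive \eqref{jumpS} directly from the relation $W^\ell = W^r S$ on $\Sigma$, the band jump \eqref{jumpW} of the Jost solutions, and uniqueness of the scattering matrix. Fix $z\in\Sigma$ (in the interior of a band, away from the branch points and the real intersection points) and write $J^s := \e^{\i\varphi_0^s\sigma_3}(\i\sigma_1)$ for $s\in\{\ell,r\}$. By Proposition~\ref{propm}(2),
\[
W^s(x;z_+) = W^s(x;z_-)\,J^s .
\]
Substituting this into \eqref{e:S.Sigma}, evaluated at $z_+$, gives
\[
W^\ell(x;z_-)\,J^\ell = W^\ell(x;z_+) = W^r(x;z_+)\,S(z_+) = W^r(x;z_-)\,J^r S(z_+).
\]
On the other hand, $W^\ell(x;z_-) = W^r(x;z_-)\,S(z_-)$.

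Both $W^r(x;z_-)$ and $W^\ell(x;z_-)$ are fundamental matrices of \eqref{e:zs}, so $\det W^r(x;z_-)\neq 0$; this follows from the Wronskian being independent of $x$ together with the asymptotics \eqref{e:Wpmasy} and the invertibility of $O^r$ away from the branch points. We may therefore cancel $W^r(x;z_-)$ and obtain the matrix identity
\[
S(z_-)\,J^\ell = J^r\,S(z_+), \qquad\text{that is,}\qquad S(z_+) = (J^r)^{-1} S(z_-)\, J^\ell .
\]

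It remains to compute this product. Using $\sigma_1\e^{\i\varphi\sigma_3} = \e^{-\i\varphi\sigma_3}\sigma_1$, we have
\[
(J^r)^{-1} = -\i\,\sigma_1\,\e^{-\i\varphi_0^r\sigma_3} = -\i\,\e^{\i\varphi_0^r\sigma_3}\sigma_1 .
\]
Hence
\[
S(z_+) = \e^{\i\varphi_0^r\sigma_3}\,\bigl(\sigma_1 S(z_-)\,\e^{\i\varphi_0^\ell\sigma_3}\sigma_1\bigr) = \e^{\i\varphi_0^r\sigma_3}\,\sigma_1 S(z_-)\sigma_1\,\e^{-\i\varphi_0^\ell\sigma_3}.
\]
Conjugation by $\sigma_1$ swaps both rows and columns, so
\[
\sigma_1 S(z_-)\sigma_1 = \begin{pmatrix} a^*(z_-) & b(z_-)\\ -b^*(z_-) & a(z_-)\end{pmatrix}.
\]
Multiplying on the left by $\mathrm{diag}(\e^{\i\varphi_0^r},\e^{-\i\varphi_0^r})$ and on the right by $\mathrm{diag}(\e^{-\i\varphi_0^\ell},\e^{\i\varphi_0^\ell})$ produces exactly the phases in \eqref{jumpS}. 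For example, the $(1,1)$ entry is $a^*(z_-)\e^{\i(\varphi_0^r-\varphi_0^\ell)}$ and the $(1,2)$ entry is $b(z_-)\e^{\i(\varphi_0^r+\varphi_0^\ell)}$. The cancelled factors $\pm\i$ combine to $(-\i)(\i)=1$.

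The only delicate point is bookkeeping rather than analysis. One has to make sure that the entries of $S(z_\pm)$ on $\Sigma$ really have the stated form $\begin{psmallmatrix} a & -b^*\\ b & a^*\end{psmallmatrix}$, where $a^*(z_\pm)$ denotes the appropriate Schwarz reflection. This follows from the column symmetries \eqref{schwarzW} together with the formulas \eqref{e:scoefs}, so that entrywise comparison is legitimate. One also has to check that the conventions for the $\pm$ sides are consistent on $\Sigma^+$ and $\Sigma^-$, since the Schwarz reflection reverses orientation. Near the real intersection points $\xi_k$, continuity of the boundary values from Proposition~\ref{propm} extends the identity up to those points, but the identity itself is only claimed on $\Sigma$ as a set of boundary values.
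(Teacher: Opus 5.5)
Your proof is correct. It reaches \eqref{jumpS} by a somewhat different route than the paper.

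The paper works entry by entry. It starts from the Wronskian formula $a(z_+)=\det[W_1^\ell(x;z_+),W_2^r(x;z_+)]$ and uses \eqref{jumpW} to replace these columns by $\i\e^{-\i\varphi_0^\ell}W_2^\ell(x;z_-)$ and $\i\e^{\i\varphi_0^r}W_1^r(x;z_-)$. It then invokes the column symmetries \eqref{schwarzW} to recognize the resulting determinant as $a^*(z_-)$, and dismisses the other three entries as ``similar''.

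You instead cancel the invertible factor $W^r(x;z_-)$ from the connection relation. This gives the single conjugation identity $S(z_+)=(J^r)^{-1}S(z_-)J^\ell$, from which you read off all four entries at once using the form \eqref{e:S}. In your argument Schwarz symmetry enters only through that form, namely the identification of the $(2,2)$ entry $\det[W_1^r,W_2^\ell]$ with $a^*$, as you point out yourself.

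What your version buys is completeness and transparent sign bookkeeping. In the paper's displayed chain, two scalar factors are not written out: the factor $\i\cdot\i=-1$ coming from the two column jumps, and the factor $\det\sigma_2=-1$ coming from \eqref{schwarzW}. They cancel, so the conclusion is unaffected. In your conjugation computation the scalars $(-\i)(\i)=1$ are tracked explicitly.

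The paper's version, in turn, shows directly how the Schwarz-reflected Jost columns produce $a^*$ on the band. That is exactly the step you delegate to the structure of $S$ in \eqref{e:S}.
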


\begin{proof}
	The jump relations \eqref{jumpS} on the scattering coefficients are a direct consequence of the jump relations satisfied by the Jost functions \eqref{jumpW}, the relations \eqref{e:scoefs} defining the scattering coefficients, and the Schwarz symmetry \eqref{schwarzW}. We only give the calculation for $a(z)$:
\begin{multline*}
	a(z_+) = \det[ W_1^\ell(x;z_+), \ W_2^r(x,z^+)] 
	= \e^{\i (\varphi_0^r-\varphi_0^\ell)} \det \left[W_2^\ell (x;z_-), \ W_1^r (x;z_-) \right] \\
	= \e^{\i (\varphi_0^r-\varphi_0^\ell)} \det \left[ -\i \sigma_2 {W_1^\ell}^* (x;z_-), \ \i \sigma_2 {W_2^r}^* (x;z_-) \right] 
	= \e^{\i (\varphi_0^r-\varphi_0^\ell)}  \det\left[ {W_1^\ell}^* (x;z_-), \ {W_2^r}^* (x;z_-) \right] \\
	= \e^{\i (\varphi_0^r-\varphi_0^\ell)} a^*(z_-).
\end{multline*} 
The other calculations are similar. 
\end{proof}

\subsection{Discrete spectral data}
For a given step-like potential $u_0$, consider the zero set 
\begin{equation}\label{e:azero}
	\zeroset (u_0)  = \zeroset = \{ z \in \overline{\mathbb{C}^+} \, : \, a(z) = 0 \}. 
\end{equation}
At any point $z_0 \in \zeroset$, due to the determinantal formula \eqref{e:scoefs}, the two vector solutions $W_1^\ell(x; z_0)$ and $W^r_2(x; z_0)$ are linearly dependent. So, for each $z_0 \in \zeroset$ there exists a nonzero complex number $\zeta = \zeta (z_0)$ such that 
\begin{subequations}
\begin{gather}\label{Darboux.const}
	W_1^\ell(x; z_0) = \zeta(z_0)  W_2^r(x; z_0), \qquad \zeta:\zeroset \to \C \setminus \{0\}, \quad z_0 \in \zeroset.
\shortintertext{The Schwarz symmetry \eqref{schwarzW} implies that $a^*(z_0) = 0$ and}
	W_2^\ell(x, \overline{z_0}) = - \overline{\zeta (z_0)} W_1^r(x; \overline{z_0}), \qquad z_0 \in \zeroset.
\end{gather} 
\end{subequations}
Notice that for $z_0 \not\in \Sigma \cap \R$, that is, not in the continuous spectrum,  the two vector solutions $W_1^\ell(x;z_0)$ and $W_2^r(x;z_0)$ defined by \eqref{e:defm}-\eqref{e:Intm} decay exponentially to zero, respectively, as $x \to -\infty$ and $x \to + \infty$. It follows that any $z_0 \in \zeroset \setminus (\Sigma \cap \R)$ is a true $L^2$ eigenvalue of the Zakharov-Shabat scattering operator \eqref{e:zs}.  Because the scattering operator is non self-adjoint, the structure of $\zeroset$ can be quite complicated. Generally speaking $\zeroset$ can contain zeros of higher order, can intersect the continuous spectrum, and possess accumulation points in the continuous spectrum. Here we assume that: 
\begin{enumerate}[label = (\arabic*)]
	\item The zeros of $a(z)$ are all simple and isolated. 
	\item There are no zeros of $a(z)$ on the real line, i.e., $\zeroset \cap \R = \emptyset$. 
\end{enumerate}
These conditions are generic in the sense that they define a dense open subset of the collection of all allowable initial data. 
Since the asymptotic condition \eqref{a+asympt} ensures that $\zeroset$ is bounded, one immediate consequence of these assumptions is that $\zeroset$ is finite. So, under these assumptions we can enumerate the discrete spectrum $\zeroset = \{ z_k \}_{k=1}^N$. 
We allow for the possibility that $\zeroset \cap \Sigma \neq \emptyset$. However, because $a(z)$ is not analytic for $z \in \Sigma$, any point $z _0 \in \Sigma \cap \zeroset$ must be considered as a boundary value of the cut plane $\C \setminus \Sigma$. Knowing that $\lim_{z \to z_0 \in \Sigma_+} a(z) = 0$ does not imply that $\lim_{z \to z_0 \in \Sigma_-} a(z) = 0$. Here we remind the reader that our convention is that subscripts $\pm$ refer to boundary values along the curve; superscripts $\pm$ denote the parts of $\Sigma$ in the upper/lower complex half-planes. 

\subsection{The inverse scattering problem}
To set up a Riemann--Hilbert problem, we introduce the sectionally meromorphic matrix $M_o(z;x)$:
\be \label{WB}
M_o(z;x)=
\begin{cases}
\left[\dfrac{W_{1}^{\ell}(x; z)}{a(z)}, \;W_{2}^r(x;z)\right] \e^{\i (x-x_0^r)z\sigma_3}, & z\in\Complex^+\setminus \Sigma^+\,,\vspace{5pt}\\ 
\left[W^r_1(x;z), \dfrac{W^{\ell}_2(x;z)}{a^*(z)}\right] \e^{\i(x-x_0^r)z\sigma_3}, & z\in\Complex^-\setminus\Sigma^-\,.
\end{cases}
\ee

Then $M_o(z;x)$ satisfies the following Riemann--Hilbert problem:
\begin{RHP}\label{RHP:M}
 Find a $2\times2$ matrix-valued function $M_o(z;x)$ which satisfies the following conditions:
 \begin{enumerate}
 	\item $M_o(z;x)$ is analytic  in $\Complex\setminus (\Real\cup\Sigma)$.
    \item $M_o(z;x)=I+\mathcal{O}(z^{-1})$, as $z\to\infty$.
    \item $M_o(z;x)$ has simple poles at each $z_k \in Z$ and 
    $\ov{z}_k \in \ov{Z}$, with residues
    \begin{equation}
    	\underset{z = z_k}{\operatorname{Res}}\, M_o(z;x) 
    	= \lim_{z \to z_k} M_o(z;x) 
    	\begin{pmatrix} 0 & 0 \\ 
    		c_k \e^{2\i(x - x_0^r)z_k} & 0 \end{pmatrix}, \qquad c_k = \frac{\zeta(z_k)}{a'(z_k)}
    \end{equation}
    and by Schwarz symmetry
    \begin{equation}
    	\underset{z = \ov{z}_k}{\operatorname{Res}}\, M_o(z;x) 
    	= \lim_{z \to \ov{z}_k} M_o(z;x) 
    	\begin{pmatrix} 0 & -\overline{c_k} 
    		\e^{-2\i(x - x_0^r)\ov{z}_k} \\ 
    		0 & 0 \end{pmatrix},
    \end{equation}
    where $\zeta(z_k)$ is the norming constant defined in \eqref{Darboux.const}.
    \item $M_o(z;x)$ admits the following jump condition:
    \be\label{jumpM}
    M_o(z_+;x)=M_o(z_-;x)\begin{cases}
    \begin{pmatrix}
    	-\dfrac{\mathrm{i} b^*(z_-)}{a(z_+)}\e^{-\mathrm{i} \varphi_0^\ell } & \mathrm{i}\e^{-2\mathrm{i} (x-x_0^r)z+\mathrm{i} \varphi_0^r } \\[1em]
    	\dfrac{\mathrm{i} \e^{2\mathrm{i} (x-x_0^r)z-\mathrm{i}\varphi_0^\ell }}{a(z_+)a(z_-)} & -\dfrac{\mathrm{i} b(z_-)}{a(z_-)}\e^{\mathrm{i} \varphi_0^r}
    \end{pmatrix}, & z\in\Sigma^+,
    	\vspace{6pt}
    	\\[1em]
    	\begin{pmatrix}
    		\dfrac{1}{|a(z)|^2} & \dfrac{b^*(z)}{a^*(z)}\e^{-2\mathrm{i}(x-x_0^r)z}\\[1em]
    		\dfrac{b(z)}{a(z)}\e^{2\mathrm{i}(x-x_0^r)z} &1
    	\end{pmatrix}, & z\in \Real,
    	\vspace{6pt}
    	\\
    	\begin{pmatrix}
    		\dfrac{\mathrm{i} b^*(z_-)}{a^*(z_-)} \e^{-\mathrm{i} \varphi_0^r}& \dfrac{\mathrm{i} \e^{-2\mathrm{i}(x-x_0^r)z+\mathrm{i} \varphi_0^\ell }}{a^*(z_+)a^*(z_-)}\\[1em]
    		\mathrm{i} \e^{2\mathrm{i}(x-x_0^r)z-\mathrm{i} \varphi_0^r} & \dfrac{\mathrm{i} b(z_-)}{a^*(z_+)}\e^{\mathrm{i} \varphi_0^\ell }
    	\end{pmatrix}, & z\in \Sigma^{-}\,.
    \end{cases}
    \ee
     \item 
     $M_o(z;x)$ satisfies Schwarz symmetry:
     \be
     M_o(z;x)=\sigma_2 \overline{M_o(\,\overline{z};x)} \sigma_2.
     \ee
     \item 
     $M_o(z;x)$ admits quartic root singularites at $z\in\{z_1, \overline{z_1},z_2,\overline{z_2}\}$.
\end{enumerate}
\end{RHP}
The jump consistency at the intersection points of $\Sigma$ and $\mathbb R$ requires a separate verification. Indeed, the construction of $M_o$ differs from that in our previous work~\cite{GJZZ}, because in the present setting the spectral bands $\Sigma$ are allowed to intersect the real axis. Hence one has to check that the jump matrices defined on $\Sigma$ and on $\mathbb R$ are compatible at the intersection points. This verification is given in Appendix~\ref{a:jumpconsistensy}.
\begin{proposition}\label{p:hep}
	Define the function  $h(z)$  by
	\begin{equation}\label{e:defh}
		\begin{aligned}
			h(z)=\exp \left\{  \frac{1}{2\pi\i}  \left( 
			\int_{\Sigma^+}
			\frac{\log\left(-\frac{b(s_+)}{b(s_-)}\right)}{s-z} \d s
			+\int_{\Sigma^-}
			\frac{\log\left(-\frac{b^*(s_-)}{b^*(s_+)}\right)}{s-z} \d s 
			-\int_{\Real}\frac{\log(1+|\rho(s)|^2)}{s-z} \d s
			\right) \right\} \,.
		\end{aligned}
	\end{equation} 
where  $\rho(z)=\frac{b(z)}{a(z)}$ is the reflection coefficient. Then, $h(z)$ 
	satisfies the following properties:
	\begin{itemize}
		\item $h(z)$ is analytic in $\C \setminus (\R \cup \Sigma)$ and satisfies the symmetry $h^*(z)  h(z) =1$.
		\item For $z \in \R \cup \operatorname{int}( \Sigma)$ the boundary values of $h(z)$ satisfy the jump relations:
		\be\label{e:jumph}
		\frac{h(z_+)}{h(z_-)}=
		\begin{cases}
			-\dfrac{b(z_+)}{b(z_-)}, & z\in \Sigma^+,\\[0.8em]
		-\dfrac{b^*(z_-)}{b^*(z_+)}, & z\in \Sigma^-,\\[0.8em]
			\left(1+\left|r(z)\right|^2\right)^{-1}, & z\in \mathbb{R}.
		\end{cases} 
		\ee
		\item $h(z) = 1 + \bigo{z^{-1}}$ as $z \to \infty$. 
		\item $h(z)$ is bounded as $z$ approaches any endpoint of $\Sigma$ 
		or any intersection point of $\Sigma$ with $\Real$.
	\end{itemize}
	\end{proposition}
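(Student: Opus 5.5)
The function $h$ is a Cauchy-type integral exponentiated, so the plan is to check each property directly on the exponent
\[
\log h(z)=\frac{1}{2\pi\mathrm{i}}\int_{\R\cup\Sigma}\frac{f(s)}{s-z}\,\d s ,
\]
where $f$ denotes the three logarithmic densities in \eqref{e:defh}.

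\textbf{Step 1: the densities are well defined.} We first show that each density is a well-defined, suitably regular function on its contour. On $\R$, $\log(1+|\rho|^2)$ is real. It is integrable because $|a|^2-|b|^2=1$ gives $1+|\rho|^2=|a|^{-2}$, while \eqref{a+asympt} and \eqref{b+asympt} give decay $\mathcal{O}(s^{-8})$. On $\Sigma^\pm$, the jump relations of Proposition~\ref{S} give $b(z_+)=-b^*(z_-)\e^{-\mathrm{i}(\varphi_0^r+\varphi_0^\ell)}$. Combined with Schwarz symmetry, $-b(s_+)/b(s_-)$ is then expressed through $b^*$ and $b$. We need $b(s_\pm)\neq0$ on $\operatorname{int}\Sigma$. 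To get this, we use the identity $\det S=1$ on $\Sigma$, i.e.\ $a(z_\pm)a^*(z_\pm)+b(z_\pm)b^*(z_\pm)=1$, together with the jump relation $a(z_+)=a^*(z_-)\e^{\mathrm{i}(\cdots)}$. We argue that $|b(s_\pm)|$ is bounded below. If this fails at isolated points, we instead fix the branch of the logarithm continuously along each arc and accept logarithmic singularities of the density, which are still integrable.

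\textbf{Step 2: analyticity, jumps and normalization.} Analyticity off $\R\cup\Sigma$ follows by differentiating under the integral. The Sokhotski--Plemelj formula gives $\log h_+-\log h_-=f$, which yields \eqref{e:jumph} after exponentiating; with the left-to-right orientation of $\R$ the sign produces $(1+|\rho|^2)^{-1}$. Since the densities are integrable on a contour that is bounded except for $\R$, and the $\R$-density has finite first moment, we get $h=1+\mathcal{O}(z^{-1})$.

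\textbf{Step 3: the symmetry $h^*h=1$.} Compute $\overline{\log h(\bar z)}$. Conjugation maps $\Sigma^+$ onto $\Sigma^-$; because $\overline\Sigma=-\Sigma$ in the sense of orientation, it reverses orientation. It also maps the $\Sigma^+$ density $\log(-b(s_+)/b(s_-))$ to $\log(-b^*(\bar s_\mp)/b^*(\bar s_\pm))$, since boundary sides are swapped under reflection. This is exactly the $\Sigma^-$ density, with the orientation reversal producing an overall sign change. On $\R$, the density is real and conjugation flips the sign of the prefactor $1/(2\pi\mathrm{i})$. Hence $\overline{\log h(\bar z)}=-\log h(z)$, up to a consistent choice of branch of $\log$, and therefore $h^*h=1$.

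\textbf{Step 4: boundedness near endpoints and intersection points (main obstacle).} Near the branch points $z_k,\bar z_k$, Theorem~\ref{theorem1} says $b$ has at worst square-root (quarter-root) behaviour. Hence $-b(s_+)/b(s_-)\to$ a finite nonzero constant times a bounded phase. Standard endpoint analysis of Cauchy integrals (Muskhelishvili) then shows that $\log h$ has at most a term $\frac{f(\text{end})}{2\pi\mathrm{i}}\log(z-\text{end})$. To obtain boundedness of $h$ we need the real part of this coefficient to be controlled; in fact we need $f(\text{end})$ purely imaginary, i.e.\ $|b(s_+)|=|b(s_-)|$, up to integer multiples of $2\pi\mathrm{i}$. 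This modulus equality follows from Step 1's relation $|b(s_+)|=|b^*(s_-)|$, combined with $|b(s_-)|=|b^*(s_-)|$ on $\Sigma$, which comes from the Schwarz symmetry. If $f(\text{end})$ is purely imaginary, the singularity $(z-\text{end})^{\mathrm{i}\beta}$ is bounded. At the intersection points $\xi_k=\Sigma\cap\R$, four contour pieces meet. Here we use the jump consistency verified in Appendix~\ref{a:jumpconsistensy}: the product of the limiting jumps of $h$ around $\xi_k$ is $1$, so the local logarithmic coefficients cancel modulo $2\pi\mathrm{i}$, and the remaining purely imaginary exponents again give a bounded contribution. Making this local cancellation rigorous is the step we expect to be hardest, and we would do it by writing $f$ near $\xi_k$ as its value at $\xi_k$ plus a Hölder remainder on each piece.
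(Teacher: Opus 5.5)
\textbf{Gap in Step 4 (the endpoints).} The gap is in Step~4, which is the only part of the statement the paper actually argues. Your endpoint criterion is inverted. Near an endpoint $c$ of an arc carrying a H\"older density $f$, one has $\frac{1}{2\pi\mathrm{i}}\int\frac{f(s)}{s-z}\,\mathrm{d}s=\mp\frac{f(c)}{2\pi\mathrm{i}}\log(z-c)+\mathcal{O}(1)$. Writing $f(c)=\alpha+\mathrm{i}\beta$, this gives $h\sim(z-c)^{\pm(\mathrm{i}\alpha-\beta)/(2\pi)}$. The modulus part $\alpha=\log|b(s_+)/b(s_-)|$ only produces a bounded oscillatory factor, while the phase part $\beta$ produces a real power. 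A purely imaginary $f(c)$, which is what you require, is exactly the case in which $h$ behaves like $(z-c)^{\mp\beta/(2\pi)}$, and this is unbounded for one sign of $\beta$. Your ``up to multiples of $2\pi\mathrm{i}$'' is also not harmless: the branch is fixed by continuity along the arc, and such a shift changes the exponent by an integer.

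Moreover, the identity $|b(s_-)|=|b^*(s_-)|$ that you use is false on $\Sigma$. Indeed $b^*(s_-)=\overline{b(\overline{s_-})}$ is evaluated at the conjugate point, and $|b|=|b^*|$ holds only on $\R$.

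The missing idea is the exact endpoint value. The paper writes $b(s)=\gamma(s)^{-2}\det[\gamma W_1^r,\gamma W_1^\ell](x;s)$. It then uses that $\gamma W^s$ has a side-independent limit at the branch point and that $\gamma_+=\mathrm{i}\gamma_-$. This gives $b(s_+)/b(s_-)\to\gamma_+^{-2}/\gamma_-^{-2}=-1$. So the density $\log(-b(s_+)/b(s_-))$ vanishes at $z_k$ and $\bar z_k$, and no logarithmic term appears at all.

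\textbf{Gap at the intersection points, and smaller slips.} The consistency check of Appendix~\ref{a:jumpconsistensy} concerns the jump matrices of $M_o$, not the scalar jumps of $h$. In any case, a cancellation only modulo $2\pi\mathrm{i}$ again does not control boundedness. The paper's argument here is exact and short. The jump relations \eqref{jumpS} give $b(s_+)=-b^*(s_-)\mathrm{e}^{-\mathrm{i}(\varphi_0^r+\varphi_0^\ell)}$ and $b^*(s_+)=-b(s_-)\mathrm{e}^{\mathrm{i}(\varphi_0^r+\varphi_0^\ell)}$, hence $b(s_+)/b(s_-)=b^*(s_-)/b^*(s_+)$ on all of $\Sigma$. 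Thus the $\Sigma_k^+$ and $\Sigma_k^-$ integrals merge into a single Cauchy integral over the upward-oriented arc $\Sigma_k$, whose density is smooth through $\xi_k$. So $\xi_k$ is an interior point and boundedness is immediate.

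There are also a few smaller slips:
\begin{itemize}
\item In the focusing case $\det S=aa^*+bb^*=1$ gives $|a|^2+|b|^2=1$ on $\R$. With your $|a|^2-|b|^2=1$ you would get $1-|\rho|^2=|a|^{-2}$.
\item $\det S=1$ gives no lower bound for $|b|$ on $\Sigma$; the paper tacitly assumes $b\neq0$ there.
\item In Step~3, conjugation does not swap the $\pm$ sides, because it reverses the orientation of both the plane and the arc. The $\Sigma^+$ density therefore maps to minus the $\Sigma^-$ density. Combined with the orientation reversal and the conjugated prefactor $\overline{1/(2\pi\mathrm{i})}$ (which you applied only to the $\R$-term), this gives the required sign. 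Your two sign errors merely cancel.
\end{itemize}
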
 
	\begin{proof}
		We present the details as $z\to z_2$, $z\to \ov{z}_2$ and $z\to \xi_2$. The calculation is similar at the points on $\Sigma_1$.
		As $z\to z_2$ along 
		\be
			\lim_{\substack{s\to z_2\\ s\in \Sigma_2^+}}
			\frac{b(s_+)}{b(s_-)}
			=
			\lim_{\substack{s\to z_2\\ s\in \Sigma_2^+}}
			\frac{\gamma(s_+)^{-2}}{\gamma(s_-)^{-2}}\,
			\frac{\det\!\bigl[\,\gamma(s_+)W_1^r(x;s_+),\,\gamma(s_+)W_1^\ell(x;s_+)\,\bigr]}
			{\det\!\bigl[\,\gamma(s_-)W_1^r(x;s_-),\,\gamma(s_-) W_1^\ell(x;s_-)\,\bigr]}
			=-1.
		\ee
		As $z\to\xi_2$, both the $\Sigma_2^+$- and $\Sigma_2^-$-integral contribute, since from \eqref{jumpS}, we have $\frac{b(s_+)}{b(s_-)}=\frac{b^*(s_-)}{b^*(s_+)}$ for $s \in \Sigma_2$. 
		It follows that the integrals along $\Sigma_2^+$ and $\Sigma_2^-$ in \eqref{e:defh} can be expressed as
		\be
			\int_{\Sigma_2^+}
		\frac{\log\left(-\frac{b(s_+)}{b(s_-)}\right)}{s-z} \d s
		+\int_{\Sigma_2^-}
		\frac{\log\left(-\frac{b^*(s_-)}{b^*(s_+)}\right)}{s-z} \d s =\int_{\Sigma_2}	\frac{\log\left(-\frac{b(s_+)}{b(s_-)}\right)}{s-z} \d s.
		\ee
		Since $b$ is smooth, the boundary values of the integral at $\xi_2$, an interior point of $\Sigma_2$, are bounded.  
	\end{proof}
Since $a(z)$ has finitely many simple zeros $Z = \{z_k\}_{k=1}^N \subset \mathbb{C}^+$ in \eqref{e:azero}, we introduce the Blaschke product
\begin{equation}\label{e:Blaschke}
	\mathcal{B}(z) = \prod_{k=1}^{N} \frac{z - z_k}{z - \bar{z}_k}\,,
\end{equation}
which satisfies $|\mathcal{B}(z)| = 1$ for $z \in \mathbb{R}$ and 
$\mathcal{B}(z) \to 1$ as $z \to \infty$ in $\mathbb{C}^+$. We then define the zero-free factor
\begin{equation}\label{e:a0}
	a_0(z) = \frac{a(z)}{\mathcal{B}(z)}\,, \qquad z \in \mathbb{C}^+ \setminus \Sigma^+\,.
\end{equation}
By construction, $a_0(z)$ is analytic and nonvanishing in $\mathbb{C}^+ \setminus \Sigma^+$, and satisfies
\begin{equation}\label{e:a0asymp}
	a_0(z)\, \e^{-\i(x_0^\ell - x_0^r)z} = 1 + O(z^{-1})\,, \qquad z \to \infty\,.
\end{equation}
We now introduce a factorization of $a(z)$ into $a_1(z)$ and $a_2(z)$, defined by
\begin{equation}\label{e:defa12}
	a_1(z) = \bigl(a_0(z)\, h(z)\bigr)^{1/2}\, 
	\e^{\frac{\i}{2}(x_0^\ell - x_0^r)z}\, \mathcal{B}(z)\,,
	\qquad
	a_2(z) = \left(\frac{a_0(z)}{h(z)}\right)^{1/2} 
	\e^{-\frac{\i}{2}(x_0^\ell - x_0^r)z}\,,
	\qquad z \in \mathbb{C}^+\,.
\end{equation}
Since $a_0(z)\, h(z)$ and $a_0(z)/h(z)$ are both nonvanishing in 
$\mathbb{C}^+ \setminus \Sigma^+$, the square roots are well-defined 
 and analytic there. 
 By construction, every zero of $a(z)$ in $\C^+$ is a zero of  $a_1(z)$ through the Blaschke factor 
$\mathcal{B}(z)$, while $a_2(z)$ is nonvanishing in $\mathbb{C}^+ \setminus \Sigma^+$.
\begin{remark}
	In the present paper, for simplicity, we assign all zeros of $a(z)$ to the factor $a_1(z)$, so that $a_2(z)$ is analytic and nonvanishing in $\C^+\setminus\Sigma^+$. More generally, one may split the Blaschke product as
	\[
	\mathcal B(z)=\mathcal B_1(z)\mathcal B_2(z),
	\]
	and define
	\[
	a_1(z)=\bigl(a_0(z)h(z)\bigr)^{1/2}\e^{\frac{\i}{2}(x_0^\ell-x_0^r)z}\mathcal B_1(z),\qquad
	a_2(z)=\left(\frac{a_0(z)}{h(z)}\right)^{1/2}\e^{-\frac{\i}{2}(x_0^\ell-x_0^r)z}\mathcal B_2(z),
	\]
	so that $a(z)=a_1(z)a_2(z)$. In this more general factorization, the zeros assigned to $a_1$ produce lower-triangular residue conditions, while the zeros assigned to $a_2$ produce upper-triangular residue conditions and the corresponding poles in $\C^-$ are then determined by Schwarz symmetry. We restrict ourselves to the choice $\mathcal B_2\equiv 1$ in what follows.
\end{remark}

It is straightforward to show the following basic properties of $a_1(z)$ and $a_2(z)$:
\begin{proposition}\label{propa12}
	Let $a_1(z)$ and $a_2(z)$ be as in \eqref{e:defa12}, then we have 
	\begin{enumerate}
		\item $a(z)=a_1(z)a_2(z)$.
		\item
		$a_1(z)=\e^{\mathrm{i}(x_0^\ell-x_0^r)z}(1+\mathcal{O}(z^{-1}))$ and $a_2(z)=1+\mathcal{O}(z^{-1})$,  as $z\to\infty$.
		\item $a_1(z)$ and $a_2(z)$ admit the following relations
		\begin{subequations}\label{e:a12jump}
			\begin{gather}
				\frac{b^*(z_-)\e^{-\mathrm{i} \varphi_0^\ell }}{a_1(z_+)a_2(z_-)} =\frac{b(z_-)\e^{\mathrm{i} \varphi_0^r }}{a_1(z_-)a_2(z_+)}, \quad z\in\Sigma^+.\\
				|a_1(z)|^{-2}=1+\left|\dfrac{b(z)}{a(z)}\right|^2, \quad |a_2(z)|^2=1, \qquad z\in\Real.
			\end{gather}
		\end{subequations}
		\item 
		$a_1(z)$ and $a_2(z)$ are bounded as $z$ approaches any endpoint of $\,\,\Sigma_1\cup\Sigma_2$
		or any intersection point with $\Real$.
	\end{enumerate}
\end{proposition}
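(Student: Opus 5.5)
The plan is to verify the four items directly from the definitions \eqref{e:defa12}, using Proposition~\ref{p:hep} for $h$, Proposition~\ref{S} for the jumps of $a,b$, and Theorem~\ref{theorem1} for the asymptotics.

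\textbf{Items 1 and 2.} Item 1 is immediate: multiplying the two formulas in \eqref{e:defa12}, the factors of $h$ and the exponentials cancel, and $(a_0h)^{1/2}(a_0/h)^{1/2}\mathcal B=a_0\mathcal B=a$. To make this rigorous, the square-root branches must be fixed consistently. The choice is made so that $(a_0\e^{-\i(x_0^\ell-x_0^r)z}h)^{1/2}\to1$ at infinity. This branch exists because $\mathbb C^+\setminus\Sigma^+$ is simply connected relative to infinity (each $\Sigma_k^+$ is an arc touching $\R$), and $a_0 h$ is nonvanishing there. Item 2 then follows from \eqref{e:a0asymp}, $h=1+\mathcal O(z^{-1})$ and $\mathcal B\to1$.

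\textbf{Item 3, real axis.} For $z\in\R$ we use the relation $h(z_+)=h(z)$ from $\mathbb C^+$ together with $h^*h=1$. Combined with \eqref{e:jumph}, this gives $|h_+|^2=h_+/h_-=(1+|\rho|^2)^{-1}$. Next, unitarity of the scattering matrix on $\R$ gives $|a|^2-|b|^2\cdot(-1)\ldots$, namely $\det S=|a|^2+|b|^2=1$, so $|a|^{-2}=1+|\rho|^2$. Also $|\mathcal B|=1$ on $\R$. Hence
\[
|a_1|^2=|a_0|\,|h_+|=|a|\,(1+|\rho|^2)^{-1/2}=|a|^2,
\]
and the identity $|a_1|^{-2}=1+|\rho|^2$ follows. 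Then $|a_2|^2=|a|^2/|a_1|^2=1$.

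\textbf{Item 3, $\Sigma^+$.} The identity is equivalent to
\[
\frac{a_1(z_-)\,a_2(z_+)}{a_1(z_+)\,a_2(z_-)}=\frac{b(z_-)}{b^*(z_-)}\,\e^{\i(\varphi_0^r+\varphi_0^\ell)}.
\]
From \eqref{e:defa12}, the left-hand side equals
\[
\left(\frac{a_0(z_+)/h(z_+)\cdot a_0(z_-)h(z_-)}{a_0(z_+)h(z_+)\cdot a_0(z_-)/h(z_-)}\right)^{1/2}=\frac{h(z_-)}{h(z_+)}=-\frac{b(z_-)}{b(z_+)},
\]
where the last step uses \eqref{e:jumph}. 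By \eqref{jumpS}, $b(z_+)=-b^*(z_-)\e^{-\i(\varphi_0^r+\varphi_0^\ell)}$, which yields the claim up to a sign coming from the square-root branches. \textbf{This branch bookkeeping is the main obstacle.} Taking $\sqrt{X_+/X_-}$ termwise is only valid modulo $\pm1$, so I would track the argument continuously along $\Sigma^+$ from the real intersection point $\xi_k$. There no jump of the square roots is needed, since $a_0h$ and $a_0/h$ have boundary values whose ratio is determined by the explicit jump of $a$ from \eqref{jumpS}. I would fix the sign by continuity, together with the fact that $\log(-b_+/b_-)$ in \eqref{e:defh} is chosen continuous and vanishing at the endpoint $z_k$.

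\textbf{Item 4.} Boundedness near the endpoints and near $\xi_k$ follows from the boundedness of $h$ and $h^{-1}$ (Proposition~\ref{p:hep}, using $h^*h=1$). It also uses the fact that $a$ has at worst quartic/square-root behaviour at these points (Theorem~\ref{theorem1}). After multiplying by $\gamma$-type factors as in the proof of Proposition~\ref{p:hep}, this behaviour is in fact bounded, since $a=\det[W_1^\ell,W_2^r]$ with each column carrying a $\gamma^{\pm1}$ factor that cancels in the determinant. Finally, $a_0$ is nonvanishing near these points by Assumption~\ref{ass2}, so the square roots remain bounded.
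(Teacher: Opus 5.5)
Items 1, 2 and the real-axis part of item 3 are fine. On $\R$, the identity $|a|^2+|b|^2=1$ follows from $\det W^\ell=\det W^r\equiv 1$, since $\det O^s\equiv1$ and the ZS matrix is trace-free.

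\textbf{Item 4 at the branch points $z_k$: genuine gap.} Your claim that the singular factors cancel in $a=\det[W_1^\ell,W_2^r]$ is not correct.
\begin{itemize}
\item As $z\to z_k$ one has $\gamma\to0$. By \eqref{defO}, \emph{both} columns of $O^s$ are of order $|\gamma|^{-1}\sim|z-z_k|^{-1/4}$. So $W_1^\ell$ and $W_2^r$ each carry $\gamma^{-1}$, and there is no $\gamma^{+1}$ to cancel it.
\item The device in the proof of Proposition~\ref{p:hep} multiplies both columns by the \emph{same} $\gamma$. This gives $b=\gamma^{-2}\det[\gamma W_1^r,\gamma W_1^\ell]$, a genuine square-root singularity. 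That is exactly the bound Theorem~\ref{theorem1} records for $a$ as well.
\item The cancellation behind $\det W_e\equiv1$ works only because both columns come from the same background, whose leading vectors at the branch point are parallel. For $\det[W_1^\ell,W_2^r]$ the left and right leading vectors are generically independent when the phases differ. For example, for a step in which only $\varphi_0$ changes, they differ by the factor $\e^{\i(\varphi_0^\ell-\varphi_0^r)\sigma_3}$. So generically $a\sim c\,(z-z_k)^{-1/2}$ with $c\neq0$.
\end{itemize}
Since $a=a_1a_2$, item 4 at $z_k$ forces $a$ to be bounded there. Conversely, near $z_k$ one has $|a_1|^2\asymp|a|\,|h|$ and $|a_2|^2\asymp|a|/|h|$ with $h^{\pm1}$ bounded, so item 4 at $z_k$ is in fact \emph{equivalent} to boundedness of $a$ at $z_k$. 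Nothing you cite, including Theorem~\ref{theorem1}, provides that.

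What your ingredients actually prove is $a_1,a_2=\mathcal{O}(|z-z_k|^{-1/4})$ at the endpoints $z_k$. Genuine boundedness holds only at the crossing points $\xi_k$, which are interior points of $\Sigma$ where the Jost solutions have bounded continuous boundary values. Your appeal to nonvanishing of $a_0$ is beside the point: it controls $a_j^{-1}$, not $a_j$.

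\textbf{Item 3 on $\Sigma^+$: the sign obstacle does not exist.} You should drop the continuity argument from $\xi_k$. The two sides of $\Sigma_k^+$ at $\xi_k$ are different sectors of $\C^+$, so nothing forces the square roots to match there. Instead use item 1, which gives $a_2=a/a_1$. Hence
\[
\frac{a_1(z_-)\,a_2(z_+)}{a_1(z_+)\,a_2(z_-)}=\frac{a_1(z_-)^2\,a(z_+)}{a_1(z_+)^2\,a(z_-)}=\frac{h(z_-)}{h(z_+)}.
\]
This holds because $a_1^2=a\,h\,\mathcal{B}\,\e^{\i(x_0^\ell-x_0^r)z}$ contains no square root, and $\mathcal{B}$ and the exponential are continuous across $\Sigma^+$. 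Your subsequent use of \eqref{e:jumph}, and of $b(z_+)=-b^*(z_-)\e^{-\i(\varphi_0^r+\varphi_0^\ell)}$ from \eqref{jumpS}, then yields the identity exactly, sign included.
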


Now we define new reflection coefficients $r_1(z)$,  $r_2(z)$ and $\rho(z)$ as follows:
\bse\label{e:r12}
\begin{gather}
	r_1(z)= \frac{ a_2(z_-)}{a_1(z_-)} \frac{\e^{-\i \varphi_0^\ell -2\i x_0^r z}}{a_1(z_+)a_2(z_-)-\i b^*(z_-)\e^{-\i \varphi_0^\ell }}\,, \qquad z\in\Sigma^+,\label{e:defr1}\\
	r_2(z)= \frac{a_1(z_+)}{a_2(z_+)}\frac{\e^{\i \varphi_0^r+2\i x_0^r z}}{a_1(z_+)a_2(z_-)-\i b^*(z_-)\e^{-\i \varphi_0^\ell }}, \qquad z\in\Sigma^+, \label{e:defr2}\\
	\rho(z)=\frac{b(z)}{a_1(z)a_2^*(z)}\e^{-2\i x_0^r z}, \qquad z\in \R. \label{e:rho}
\end{gather}
\ese

Recall that the Sobolev space   $ \mathcal{W}^{n,1}(\Real)$ consists of   functions    $f\in L^1(\R)$,  such that  $\partial_x^j f \in L^1(\mathbb{R})$ for all $0 \le j \le n$ and similarly for  $ \mathcal{W}^{n,1}(\R^\pm)$. Then, following the analysis of \cite{GJZZ}, the reflection coefficient $\rho(z)$
admits the following decay as $z \to \infty$.

\begin{lemma}
   Suppose $u_0-u_e^\ell \in \mathcal{W}^{4,1}(\R^-)$ and $u_0-u_e^r \in \mathcal{W}^{4,1}(\R^+) $. 	
   Then $\rho(z)$ defined in \eqref{e:rho} satisfies $\rho(z)=\mathcal{O}(z^{-4})$ as $z\to\infty$, as follows immediately from Theorem~\ref{theorem1}.
\end{lemma}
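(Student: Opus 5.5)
The decay is obtained by writing $\rho$ as a product of factors that are each bounded for large real $z$, with the only $z$-decaying factor coming from $b$. By \eqref{e:rho},
\[
\rho(z)=\frac{b(z)\e^{-\i(x_0^\ell-x_0^r)z}}{a_1(z)\e^{-\i(x_0^\ell-x_0^r)z}\,a_2^*(z)}\,\e^{-\i(x_0^\ell+x_0^r)z}.
\]
The last exponential has modulus one for $z\in\R$. The numerator is $\mathcal{O}(z^{-4})$ by \eqref{b+asympt} of Theorem~\ref{theorem1}, which uses exactly the hypothesis $u_0-u_e^{\ell}\in\mathcal{W}^{4,1}(\R^-)$ and $u_0-u_e^r\in\mathcal{W}^{4,1}(\R^+)$. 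It therefore remains to bound the denominator away from zero for large real $z$.

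\textbf{The factor $a_2^*$.} By Proposition~\ref{propa12}(3), $|a_2(z)|=1$ on $\R$. Hence $|a_2^*(z)|=|\overline{a_2(\bar z)}|=1$ for $z\in\R$, and this factor is harmless.

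\textbf{The factor $a_1$.} There are two ways to bound it, and either suffices.
\begin{itemize}
\item By Proposition~\ref{propa12}(3), $|a_1(z)|^{-2}=1+|b(z)/a(z)|^2$ on $\R$, so $|a_1(z)|^{-1}\le (1+|b/a|^2)^{1/2}$.
\item By Proposition~\ref{propa12}(2), $a_1(z)\e^{-\i(x_0^\ell-x_0^r)z}=1+\mathcal{O}(z^{-1})$, so $|a_1(z)|\to1$ as $|z|\to\infty$ along $\R$.
\end{itemize}
This second expansion holds in $\overline{\C^+}$. That follows from \eqref{a+asympt}, \eqref{e:a0asymp}, the fact that $h=1+\mathcal{O}(z^{-1})$ (Proposition~\ref{p:hep}), and $\mathcal{B}\to1$.

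\textbf{Conclusion.} For $|z|$ large, $|a_1(z)|\ge 1/2$ and $|a_2^*(z)|=1$, so
\[
|\rho(z)|\le 2\,\bigl|b(z)\e^{-\i(x_0^\ell-x_0^r)z}\bigr|=\mathcal{O}(z^{-4}).
\]

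The one point needing care is that the large-$z$ expansions in Proposition~\ref{propa12} hold up to the real axis. This needs $h$ to be continuous up to $\R$ away from $\Sigma$, which holds because the Cauchy integral defining $h$ has a $\log(1+|\rho|^2)$ density. That argument is circular only in appearance: one can instead use the first bullet, since $|a_1|^{-1}\le(1+|b/a|^2)^{1/2}$ with $|a|\to1$ and $b\to0$ on $\R$ by Theorem~\ref{theorem1}.
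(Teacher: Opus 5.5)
Your argument is correct and is the paper's one-line proof (which just cites \eqref{b+asympt} of Theorem~\ref{theorem1}) with the implicit step made explicit: on $\R$ you bound $|a_1a_2^*|$ from below via Proposition~\ref{propa12}, and in fact $|a_1a_2^*|=|a|\to1$ there, so $|\rho|=|b/a|$. One harmless slip is that the leftover unimodular factor in your rewriting of \eqref{e:rho} should be $\e^{-2\i x_0^r z}$, not $\e^{-\i(x_0^\ell+x_0^r)z}$; this does not affect the bound.
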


To set up a more general Riemann–Hilbert problem, we symmetrize the factorization by defining
\be
M(z;x)=M_o(z;x)
\begin{cases}
	a_2(z)^{\sigma_3}, & z\in \Complex^+,\\
	a_2^*(z)^{-\sigma_3}, & z\in \Complex^-.
\end{cases}
\ee
Then, $M(z;x)$ satisfies the  Riemann--Hilbert problem \eqref{RHP:Mtilde}  with $t=0$.
Following the framework established in \cite{GJZZ} for the elliptic-background setting, the time evolution of the scattering data  introduces the factor $\mathrm{e}^{2\mathrm{i} t z^2}$ into the off-diagonal entries of the jump matrix and the corresponding residue conditions. At the end of this section, we present the theorem regarding the solvability of Riemann--Hilbert problem \ref{RHP:Mtilde} and the existence of solutions for the NLS equation, while their detailed proofs are in Appendix~\ref{a:proof}.

\begin{theorem}
	The potential $u(x,t)$ is reconstructed from the unique solution $M(z;x,t)$ of the Riemann--Hilbert problem RHP~\ref{RHP:Mtilde}, with reflection coefficients  $r_1,r_2\in C(\Sigma)$, and  $\rho\in L^{2,2}(\R)\cap L^{1,2}(\R)$,  and discrete spectral data $\{z_k, \zeta(z_k)\}_{j=1}^N$, via
	\[
	u(x,t) = 2\mathrm{i} \lim_{z \to \infty} z\, M_{12}(z;x,t).
	\]
	Furthermore $u(x,t)\in C^2(\R)\times C(\R)$.
\end{theorem}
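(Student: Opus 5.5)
We prove the theorem in three steps: transform the problem, establish solvability, then show regularity and that $u$ solves \eqref{e:nls}.

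\emph{Step 1: remove the poles and reduce to an $L^2$ problem.} The discrete spectrum is finite and lies off $\R\cup\Sigma$. We surround each $z_k$ and $\overline{z_k}$ by a small circle that does not meet $\R\cup\Sigma$. Inside each circle we replace $M$ by $M$ times the triangular factor $\begin{psmallmatrix}1&0\\ -c_k\e^{2\i(x-x_0^r)z_k+2\i tz_k^2}/(z-z_k)&1\end{psmallmatrix}$, and use its Schwarz-symmetric counterpart at $\overline{z_k}$. This trades the residue conditions \eqref{e:residue} for jumps on the circles, with no poles left.

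We then factor out the background. Set $N(z)=M(z)\,\Psi(z)^{-1}$, where $\Psi$ is built from the genus-one solution $O$ of RHP~\ref{RHP:O} (Proposition~\ref{prop:Osol}) and the factor $\e^{\pm\i((x-x_0)(p-E)+t(q-N))\sigma_3}$, as in Proposition~\ref{prop:W0}. In practice it is simpler to keep the $\Sigma$-jump and consider the full contour $\Gamma=\R\cup\Sigma\cup\{\text{circles}\}$. The jump $V$ on $\Gamma$ is bounded and continuous on each arc. The Appendix~\ref{a:jumpconsistensy} consistency at $\xi_1,\xi_2$ gives the cyclic relation at the intersection points, so $V-I\in L^2\cap L^\infty(\Gamma)$, using $\rho\in L^{2,2}\cap L^{1,2}$. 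The quartic-root singularities at the branch points are handled in $L^2$ exactly as in \cite{GJZZ}.

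\emph{Step 2: solvability.} This is the main step. We want to apply Zhou's vanishing lemma. The contour $\Gamma$ is Schwarz-reflection invariant; on $\R$ the jump is $\begin{psmallmatrix}1+|\rho|^2&\overline\rho\e^{-2\theta}\\ \rho\e^{2\theta}&1\end{psmallmatrix}$, which is positive definite; and the jumps on $\Sigma^\pm$ and on the circles satisfy $V(\bar z)=\overline{V(z)}^{\,T}$ up to orientation. Zhou's argument then shows that any solution decaying at infinity vanishes.

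The delicate point is $\Sigma$. The jump on $\Sigma^+$ combined with the jump on $\Sigma^-$ must give a nonnegative contribution. I would establish this directly from \eqref{e:V0} and the relation $V|_{\Sigma^-}=\sigma_2\overline{V}\sigma_2$. If this fails, the fallback is to first conjugate by the $g$-type function $\e^{\i(p-z)\sigma_3}$, which turns the $\Sigma$ jump into the constant off-diagonal form $\i\sigma_1$-type. That form is handled by the known unique solvability of the $O$-problem, and a Fredholm plus vanishing argument is then applied to the remainder.

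Once vanishing holds, the singular integral operator $1-C_w$ is Fredholm of index zero on $L^2(\Gamma)$, hence invertible. The resulting $M$ is unique by the usual Liouville argument on $M\widetilde M^{-1}$, using $\det M\equiv1$.

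\emph{Step 3: regularity and the equation.} Differentiating the singular integral equation in $x$ brings down factors of $z$ in the jump on $\R$. This is controlled by the weights in $L^{2,2}\cap L^{1,2}$, which gives two $x$-derivatives and one $t$-derivative; the jumps on $\Sigma$ and the circles are compact and smooth in $z$. It follows that $u=2\i\lim z M_{12}$ belongs to $C^2\times C$.

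Finally, a dressing argument shows that $u$ solves the NLS equation. The jump of $\Psi=M\e^{-\theta\sigma_3}$ is independent of $(x,t)$, so $\Psi_x\Psi^{-1}$ and $\Psi_t\Psi^{-1}$ are polynomials in $z$; they coincide with $\mathcal L$ and $\mathcal B$ in \eqref{e:NLSLP}, and compatibility of the pair gives \eqref{e:nls}.
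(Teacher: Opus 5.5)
Your proposal follows essentially the same route as the paper's proof in Appendix~\ref{a:proof}: trade the residues for jumps on small circles, pose the resulting holomorphic problem as a Beals--Coifman equation with $V-I\in L^2\cap L^\infty$, get invertibility of $1-C_w$ from Zhou's vanishing lemma, and obtain the $C^2\times C$ regularity by differentiating the integral equation; the paper simply delegates the vanishing-lemma and differentiability steps to \cite{GJZZ}. The fallback you sketch for $\Sigma$ is not needed, because each $\Sigma_j$ is oriented upward, so $V|_{\Sigma^-}=\sigma_2\overline{V}\sigma_2$ together with $\det V=1$ gives $\overline{V(\bar z)}^{\,T}=V(z)^{-1}$ for $z\in\Sigma^+$ (and the analogous identity on the circles); hence $M(z)\,\overline{M(\bar z)}^{\,T}$ has no jump across $\Sigma^+$, and positivity is required only on $\R$, where the jump is Hermitian positive definite.
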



\section{Full soliton gas via soliton condensation}\label{sec_soliton} 
\begin{figure}[t]
	\centering
	\begin{tikzpicture}[scale=0.6, >=stealth, x=1cm,y=1cm]
		\tikzset{
			midarrow/.style={
				postaction={decorate,
				   decoration={markings,
				     mark=at position 0.55 with {\arrow{stealth}}
				   }
				}
			}
		}
		
		\def\N{5} 
		
		\draw[gray!70] (-8,-6) rectangle (8,6);
		
		\coordinate (L) at (-5,0);     
		\coordinate (R) at ( 4.8,0);     
		\coordinate (A) at (-4.2, 3.2);  
		\coordinate (B) at (-4.2,-3.2);  
		\coordinate (C) at ( 1.6, 4.3);  
		\coordinate (D) at ( 1.6,-4.3);  

		\draw[->,thick] (-6.5,0) -- (7.0,0) node[below right] {$\mathbb{R}$};
		
		\draw[thick,midarrow] (L) -- (A);
		\draw[thick,midarrow] (B) -- (L);
		
		\draw[thick,midarrow]
		(R) .. controls (4.0,1.0) and (2.4,3.2) .. (C);
		\draw[thick,midarrow]
		(D) .. controls (2.4,-3.2) and (4.0,-1.0) .. (R);
		
		\draw[thick, gray!50, domain= -0.5236:0.5236, variable=\x, samples=100,->-=0.25] 
			plot ( { -5 + 5*cos(deg(\x+1.33))*cos(deg(3*\x)) },  { 4.5*sin(deg(\x+1.33))*cos(deg(3*\x))} );
		\draw[thick,gray!50, domain=-pi/6:pi/6, variable=\x, samples=100,->-=0.25] 
			plot ( { -5 + 5*cos(deg(\x-1.33))*cos(deg(3*\x)) },  { 4.5*sin(deg(\x-1.33))*cos(deg(3*\x))} );
		\draw[thick, gray!50, domain= -0.5236:0.5236, variable=\x, samples=100,->-=0.25] 
			plot ( { 4.8 + 6.5*cos(deg(\x+2.21))*cos(deg(3*\x)) },  { 6.5*sin(deg(\x+2.21))*cos(deg(3*\x))} );
		\draw[thick,gray!50, domain=-pi/6:pi/6, variable=\x, samples=100,->-=0.25] 
			plot ( { 4.8 + 6.5*cos(deg(\x-2.21))*cos(deg(3*\x)) },  { 6.5*sin(deg(\x-2.21))*cos(deg(3*\x))} );

		\fill (L) circle (2.2pt);
		\fill (R) circle (2.2pt);
		
		\fill[red] (A) circle (2.2pt);
		\fill[red] (B) circle (2.2pt);
		\fill[red] (C) circle (2.2pt);
		\fill[red] (D) circle (2.2pt);
		
		\node[below left] at (L) {\scriptsize$\kappa_1$};
		\node[below right] at (R) {\scriptsize$\kappa_2$};
		\node[left]  at (A) {\scriptsize$z_1$};
		\node[left]  at (B) {\scriptsize$\overline{z}_1$};
		\node[right] at (C) {\scriptsize$z_2$};
		\node[below right] at (D) {\scriptsize$\overline{z}_2$};
		\node[gray!50] at (-3.9,4.8)  {\scriptsize $\mathcal{C}_1^+$};
		\node[gray!50] at (-3.9,-4.8)  {\scriptsize $\mathcal{C}_1^-$};
		\node[gray!50] at (0.6,5.5)  {\scriptsize $\mathcal{C}_2^+$};
		\node[gray!50] at (0.6,-5.5)  {\scriptsize $\mathcal{C}_2^-$};
		
		\node[above] at (-3.9,1.9) {\scriptsize$\L_1^+$};
		\node[below] at (-3.9,-1.9) {\scriptsize$\L_1^-$};
		\node[above] at (3.2,2.6) {\scriptsize$\L_2^+$};
		\node[below] at (3.2,-2.6) {\scriptsize$\L_2^-$};
		
		\foreach \l in {1,...,\N} {
			\pgfmathsetmacro{\s}{(\l-0.5)/\N}
			\fill[blue] ($(L)!\s!(A)$) circle (2pt);
		}
		\foreach \l in {1,...,\N} {
			\pgfmathsetmacro{\s}{\l/\N}
			\fill[red] ($(L)!\s!(A)$) circle (2pt);
		}
		
		\foreach \l in {1,...,\N} {
			\pgfmathsetmacro{\s}{(\l-0.5)/\N}
			\fill[blue] ($(L)!\s!(B)$) circle (2pt);
		}
		\foreach \l in {1,...,\N} {
			\pgfmathsetmacro{\s}{\l/\N}
			\fill[red] ($(L)!\s!(B)$) circle (2pt);
		}
		
		\foreach \l in {1,...,\N} {
			\pgfmathsetmacro{\t}{(\l-0.5)/\N}
			\path (R) .. controls (4.0,1.0) and (2.4,3.2) .. (C)
			coordinate[pos=\t] (P);
			\fill[blue] (P) circle (2pt);
		}
		\foreach \l in {1,...,\N} {
			\pgfmathsetmacro{\t}{\l/\N}
			\path (R) .. controls (4.0,1.0) and (2.4,3.2) .. (C)
			coordinate[pos=\t] (Q);
			\fill[red] (Q) circle (2pt);
		}
		
		\foreach \l in {1,...,\N} {
			\pgfmathsetmacro{\t}{(\l-0.5)/\N}
			\path (R) .. controls (4.0,-1.0) and (2.4,-3.2) .. (D)
			coordinate[pos=\t] (P);
			\fill[blue] (P) circle (2pt);
		}
		\foreach \l in {1,...,\N} {
			\pgfmathsetmacro{\t}{\l/\N}
			\path (R) .. controls (4.0,-1.0) and (2.4,-3.2) .. (D)
			coordinate[pos=\t] (Q);
			\fill[red] (Q) circle (2pt);
		}
		
		\begin{scope}[shift={(1,1.5)}]
		\draw[thick] (4.8,2.4) rectangle (6.5,3.8);
		\fill[red]  (5.2,3.3) circle (2.4pt);
		\node[right] at (5.5,3.3) {\scriptsize$\lambda_j$};
		\fill[blue] (5.2,2.8) circle (2.4pt);
		\node[right] at (5.5,2.8) {\scriptsize$\mu_j$};
		\end{scope}
	\end{tikzpicture}
	\caption{Distribution of solitons poles along the accumulation curves $L_1$ and $L_2$ and the interpolation contours $\mathcal{C}_1$ and $\mathcal{C}_2$
	used to remove the poles. 
	}
	\label{fig:solgas}
\end{figure}


\subsection{Generalized gas as limit of $4N$ soliton}

In this section we describe the construction of a soliton gas as the continuum limit of a multi-soliton solution of \eqref{e:nls} in which the discrete eigenvalues of the Zakharov-Shabat scattering problem \eqref{e:zs} associated to the soliton solution accumulate along given curves. We show below the continuum limit soliton gas is closely related to the step-like oscillatory potentials we have introduced above. 

We start by considering a $4N$-soliton solution of the focusing NLS equation \eqref{e:nls} spectrally encoded by $8N$ discrete eigenvalues of \eqref{e:zs} together with their associated nonzero norming constants. By Schwarz symmetry these $8N$ eigenvalues come in complex conjugate pairs. 
We suppose the eigenvalues are arranged on two non-intersecting line segments $\L_1^+$ and $\L_2^+$ which emerge from distinct points $\kappa_1, \kappa_2 \in \R$ into $\mathbb{C}^+$ which terminate at endpoints $z_1$ and $z_2$, respectively, together with their complex conjugates which we denote by $\L_1^-$ and $\L_2^-$. 
See Figure~\ref{fig:solgas}. 
For simplicity of presentation we suppose that $2N$ eigenvalues lie on each line $\L_\alpha^+$, $\alpha \in \{1,2\}$ (with the other $4N$ eigenvalues at the complex conjugate points) and that these eigenvalues are equally spaced along each line. We partition the eigenvalues along each line into two interlaced sets $\{ \lambda^{(\alpha)}_j\}_{j=1}^N, \ \{ \mu^{(\alpha)}_j\}_{j=1}^N \subset \L_\alpha^+$. The $4N$-soliton solution of \eqref{e:nls} can then be described by a meromorphic Riemann-Hilbert problem with simple poles at each discrete eigenvalue as follows.


\begin{RHP}
	\label{rhp:2N}
	Find a $2 \times 2$ matrix valued function $\Msol(\cdot; x, t)$ with the properties:
	\begin{enumerate}
		\item $\Msol(z; x, t)$ is analytic for $z \in\C\setminus \left\{ \lambda_j^{(1)}, \lambda_j^{(2)}, \overline{\lambda_j^{(1)}},  \overline{\lambda_j^{(2)}}, \mu_j^{(1)}, \mu_j^{(2)} , \overline{\mu_j^{(1)}}, \overline{\mu_j^{(2)} } \right\}_{j=1}^N $\,.
		\item$\Msol(z; x, t) = I + \mathcal{O}(z^{-1})$
		as $z \to \infty$\,.
		\item $\Msol(z; x,t)$ has simple poles at $\{\lambda_j^{(\alpha)},\overline{\lambda_j^{(\alpha)}}\}_{j=1}^N$\,,
		$\{\mu_l^{(\alpha)}, \overline{\mu_l^{(\alpha)}}\}_{l=1}^M$, with $\alpha\in\{1,2\}$:
		\bse\label{e:sol.residues}
		\begin{gather}
		\hspace{-.5cm}
		\Res_{z=\lambda_j^{(\alpha)}} \Msol(z)  =\lim_{z\to \lambda_j^{(\alpha)}} \Msol(z) 
		  \begin{psmallmatrix} 0&0\\ c_j^{(\alpha)}\e^{2\theta(z;x,t)} & 0\end{psmallmatrix} \,,\quad 
		\Res_{z=\overline{\lambda_j^{(\alpha)}}} \Msol(z) =  \lim_{z\to \overline{\lambda_j^{(\alpha)} } } \Msol(z) 
		  \begin{psmallmatrix} 0&-\overline{c_j^{(\alpha)}}\e^{-2\theta(z;x,t)}\\ 0 &0\end{psmallmatrix}\, ,
		\\
		\hspace{-.5cm}		
		\Res_{z=\mu_j^{(\alpha)}} \Msol(z) =\lim_{z\to \mu_j^{(\alpha)}} \Msol (z) 
		  \begin{psmallmatrix} 0&\chi_j^{(\alpha)} \e^{-2\theta(z;x,t)}\\ 0 &0\end{psmallmatrix}\,, \quad 
		\Res_{z=\overline{\mu_j^{(\alpha)}}} \Msol(z) = \lim_{z\to \overline{\mu_j^{(\alpha)}}} \Msol(z) 
		\begin{psmallmatrix} 0&0\\ -\overline{\chi_j^{(\alpha)}}\e^{2\theta(z;x,t)} &0\end{psmallmatrix}\,,
		\end{gather}
		\ese
			where 
		\[
			\theta(z;x,t) = \mathrm{i}t z^2 + \mathrm{i}x z\,.
		\]
	\end{enumerate}
\end{RHP}

\begin{remark}
RHP~\ref{rhp:2N} differs from the standard representation of the $N$-soliton Riemann-Hilbert problem in that here we have  upper- and lower-triangular residue conditions. These comes from a suitable renormalization of the standard meromorphic Riemann--Hilbert problem associated to the $N$-soliton solution of \eqref{e:nls} which have  residues of one triangularity, and the detailed renormalization can be found in Appendix~B.1 of \cite{BJM18}.
\end{remark}

\subsection{From a $\mathcal{C}$-jump RHP to a pure pole RHP}
To arrive at a soliton gas from the $4N$ soliton we need to assume some coherence in the eigenvalues and the associated normalizing coefficients $c^{(\alpha)}_j$ and $\chi^{(\alpha)}_j$ which appear in RHP~\ref{rhp:2N}. To make the connection with minimal technical assumptions we assume that the two poles sets are equally spaced along each $\L_\alpha^+$ (and the same in the lower half plane) and interlacing:
\bse \label{e:interlacing}
\begin{gather}
	\lambda^{(1)}_ j=\kappa_1+j\frac{z_1-\kappa_1}{N}\,,\qquad
	\mu_j^{(1)}=\kappa_1+\left(j-\frac{1}{2}\right)\frac{z_1-\kappa_1}{N}\,,
	\qquad j=1,\dots, N, \\
	\lambda_j^{(2)}=\kappa_2+j\frac{z_2-\kappa_2}{N}\,, 
	\qquad 
	\mu_j^{(2)}=\kappa_2+\left(j-\frac{1}{2}\right)\frac{z_2-\kappa_2}{N}\,,
	\qquad j=1,\dots, N.
\end{gather}
\ese
Next we describe the conditions we impose upon the norming constants. Let 
\be\label{e:defB}
B(w)=\prod_{\ell=1}^N\left(\frac{w-\ell+\frac{1}{2}}{w-\ell}\right)=\frac{\Gamma(w-N)}{\Gamma(w)}\frac{\Gamma(w+\frac{1}{2})}{\Gamma(w+\frac{1}{2}-N)}.
\ee
Also introduce the simple shift and rescaling transformations
\be \label{e:defw}
w_1(z)=\frac{z-\kappa_1}{z_1-\kappa_1}, \qquad
w_2(z)=\frac{z-\kappa_2}{z_2-\kappa_2}.
\ee
Then the function
\begin{equation}
	B( N w_\alpha(z)), \qquad \alpha \in \{1,2\}
\end{equation}
has poles at $\{\lambda_\ell^{(\alpha)} \}_{\ell=1}^N$ and zeros at $\{ \mu_\ell^{(\alpha)} \}_{\ell=1}^N$ for each $\alpha \in \{ 1,2 \}$. 
We now make the following assumption:
\begin{assumption}\label{assume}
There exists analytic functions $C(z)$ and $D(z)$ defined in a neighborhood of~~$\Sigma_1^+ \cup \Sigma_2^+$ in $\C^+$ which satisfy
\be\label{e:vanishing}
	|C(z)| = \bigo{z-\kappa_\alpha},  \qquad |D(z)| = \bigo{z- \kappa_\alpha}, \qquad k \to \kappa_\alpha, \quad \alpha \in \{1,2\}.
\ee
and 
\be
	| 1+C(z) D(z) | > \delta, 
\ee
for some constant $\delta > 0$ and all $z$ in a neighborhood of 	~$\Sigma_1^+ \cup \Sigma_2^+$ in $\C^+$.
Moreover,  the norming constants $c_j^{(\alpha)}$ and $\chi^{(\alpha)}_j$ in \eqref{e:sol.residues}, associated to the poles $\lambda_j^{(\alpha)},\ \mu_j^{(\alpha)}$ defined by \eqref{e:interlacing}, are given by
\be\label{e:normingconst}
	c_j^\alpha= -C(\lambda_j^{(\alpha)}) \left( \frac{z_\alpha-\kappa_\alpha}{N}\right) \Res_{w=j} B(w) ,\qquad
	\chi_j^\alpha= -\frac{D(\mu_j^{(\alpha)})}{1+C(\mu_j^{(\alpha)})D(\mu_j^{(\alpha)})} \left(\frac{ z_\alpha-\kappa_\alpha}{N}\right) B'(j-1/2)^{-1}.
	\ee
\end{assumption}

This assumption allows us to introduce a sectionally holomorphic Riemann-Hilbert problem which is equivalent to the meromorphic problem RHP~\ref{rhp:2N} which trades the poles for jumps on loop contours which enclose the pole loci $\L_1$ and $\L_2$. 
For $\alpha \in \{1,2\}$ introduce simple closed contours $\mathcal{C}_\alpha^+$ in $\C^+$ which enclose $\L_\alpha^+$, meeting the real axis at $\kappa_\alpha$, and stay within the domain of analyticity of the functions $C(z)$ and $D(z)$ described in Assumption~\ref{assume}, see Figure~\ref{fig:solgas}.
Let $\mathcal{C}_\alpha^-$ denote their complex conjugate contours in $\C^-$ and set
\[
	\mathcal{C}_1 := \mathcal{C}_1^+ \cup \mathcal{C}_1^-,
	\qquad
	\mathcal{C}_2 := \mathcal{C}_2^+ \cup \mathcal{C}_2^-.
\]

Now we define a new Riemann-Hilbert problem:
\begin{RHP}\label{rhp:P}
	Given contours $\mathcal{C}_1$ and $\mathcal{C}_2$ as shown in Figure~\ref{fig:solgas} and functions $C(z)$ and $D(z)$ satisfying Assumption~\ref{assume},
	find a matrix  $P (z) = P(z; x, t)$ such that:
	\begin{enumerate}
		\item $P (z)$ is analytic on $\C \setminus (\mathcal{C}_1 \cup \mathcal{C}_2)$.
		\item $P(z)$ has jumps across $\mathcal{C}_1 \cup \mathcal{C}_2$ with the jump condition $P_+(z)=P_-(z) V_P(z)$, where the jump matrix is  given as follows 
		\be\label{e:Pjump}
		\begin{gathered}
		V_P(z) = \begin{dcases}
			\begin{pmatrix}
				1 & D(z)B(Nw_\alpha(z))^{-1}\e^{-2\theta(z;x,t)}\\
				C(z)B(Nw_\alpha(z))\e^{2\theta(z;x,t)} & 1+C(z)D(z)
			\end{pmatrix}\,, & z \in \mathcal{C}_\alpha^+\, ,\\
			\begin{pmatrix}
				1+C^*(z)D^*(z) & -C^*(z)B(Nw_\alpha^*(z))\e^{-2\theta(z;x,t)}\\ -D^*(z)B(Nw_\alpha^*(z))^{-1}\e^{2\theta(z;x,t)}
				& 1
			\end{pmatrix}\,, & z \in \mathcal{C}_\alpha^-,
		\end{dcases}
		\end{gathered}
		\ee
		for $\alpha \in \{1,2\}$.
		\item $P(z) = I + \mathcal{O}(z^{-1})$
		as $z \to \infty$\,.
	\end{enumerate}
\end{RHP}

\begin{remark}
	The vanishing condition \eqref{e:vanishing} in Assumption~\ref{assume} implies that $\lim_{z \to \kappa_\alpha} V_P(z) = I$ along any component of $\mathcal{C}_\alpha$, so the jump matrix is trivially consistent at the self-intersection points of the jump contour. 
\end{remark}

\begin{proposition}
	The solution to RHP~\ref{rhp:P} exists and is identical to the solution of $\Msol(z)$ of RHP~\ref{rhp:2N} with poles given by \eqref{e:interlacing} and norming constants  given by \eqref{e:normingconst} for all $z$ outside a compact set. In particular, the $4N$ soliton solution of \eqref{e:nls} encoded by RHP~\ref{rhp:2N} is given by 
\[
	u_{\mathrm{sol}}(x,t) = 2\i \lim_{z \to \infty} z P_{12}(z;x,t)
\]
\end{proposition}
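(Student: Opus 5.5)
The plan is the standard "interpolation of residues" argument. Write $\mathcal{D}_\alpha^\pm$ for the bounded region enclosed by $\mathcal{C}_\alpha^\pm$, so that $\mathcal{D}_\alpha^+\supset \L_\alpha^+$. I define $P$ from $\Msol$ by a piecewise right multiplication:
\[
P(z)=\Msol(z)\begin{pmatrix}1&0\\ -C(z)B(Nw_\alpha(z))\e^{2\theta(z;x,t)}&1\end{pmatrix}\begin{pmatrix}1&-\dfrac{D(z)B(Nw_\alpha(z))^{-1}}{1+C(z)D(z)}\e^{-2\theta(z;x,t)}\\ 0&1\end{pmatrix}, \qquad z\in \mathcal{D}_\alpha^+ .
\]
In $\mathcal{D}_\alpha^-$ I use the Schwarz-reflected factors, and outside $\mathcal{D}_1^\pm\cup\mathcal{D}_2^\pm$ I set $P=\Msol$. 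The exact order of the triangular factors is to be fixed so that the product of the inverses reproduces $V_P$ on $\mathcal{C}_\alpha^+$. A direct multiplication shows that the lower-triangular factor $\begin{psmallmatrix}1&0\\ CB\e^{2\theta}&1\end{psmallmatrix}$ times the upper-triangular factor $\begin{psmallmatrix}1& DB^{-1}\e^{-2\theta}\\0&1\end{psmallmatrix}$ equals $\begin{psmallmatrix}1&DB^{-1}\e^{-2\theta}\\ CB\e^{2\theta}&1+CD\end{psmallmatrix}$. With the orientation of $\mathcal{C}_\alpha^+$ chosen so that the interior is on the $-$ side, this yields exactly the jump \eqref{e:Pjump}. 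Since $P=\Msol$ outside a compact set, the large-$z$ asymptotics and the reconstruction formula for $u_{\mathrm{sol}}$ are inherited immediately.

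The key step is to check that $P$ has no singularities inside $\mathcal{D}_\alpha^+$; the lower half-plane then follows by symmetry. The factor $B(Nw_\alpha)$ has simple poles at $\lambda_j^{(\alpha)}$ with residue $\frac{z_\alpha-\kappa_\alpha}{N}\Res_{w=j}B(w)$. The residue condition of $\Msol$ at $\lambda_j^{(\alpha)}$ uses $c_j^{(\alpha)}$ from \eqref{e:normingconst}, and multiplying by $\begin{psmallmatrix}1&0\\-CB\e^{2\theta}&1\end{psmallmatrix}$ cancels it. I would expand $\Msol=\frac{A}{z-\lambda}+F+\dots$ with $A=F\begin{psmallmatrix}0&0\\c\e^{2\theta}&0\end{psmallmatrix}$, and check that the $(z-\lambda)^{-2}$ and $(z-\lambda)^{-1}$ terms vanish because the first column of $A$ is proportional to the second column of $F$ and the second column of $A$ is zero. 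The upper factor is analytic at $\lambda_j$, since $B^{-1}$ vanishes there.

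The same computation applies at $\mu_j^{(\alpha)}$, using $\chi_j^{(\alpha)}$ and the residue of $B^{-1}(Nw_\alpha)$, namely $\frac{z_\alpha-\kappa_\alpha}{N}B'(j-\tfrac12)^{-1}$. The lower factor is harmless there because $B$ has a zero. One subtlety arises because the factors are multiplied together: at $\mu_j$ the product of the lower factor (which vanishes there) with the pole of the upper factor has to be examined. The $1+CD$ denominator is engineered precisely for this, so I would verify the combined residue carefully. I expect this bookkeeping of signs and triangular orderings to be the main obstacle. The removability of isolated singularities is then clear, since the result is bounded.

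For the converse direction I would use uniqueness. Solutions of RHP~\ref{rhp:P} have determinant $1$: $\det V_P=1$, the jump is continuous, and $V_P\to I$ at $\kappa_\alpha$ by Assumption~\ref{assume}. So any two solutions coincide by Liouville. Existence follows from the existence of the soliton solution $\Msol$, which holds for generic data (the $4N$-soliton RHP is uniquely solvable by the standard Schwarz-symmetry vanishing lemma). Alternatively, I would apply the vanishing lemma directly to RHP~\ref{rhp:P}, using that $V_P$ on $\mathcal{C}^-$ is the Schwarz reflection of the one on $\mathcal{C}^+$. Either way, $P$ equals $\Msol$ outside the contours, which yields the stated formula for $u_{\mathrm{sol}}$.
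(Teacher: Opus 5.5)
Your architecture matches the paper's: right-multiply $\Msol$ by triangular factors inside the loops, inherit the normalization and the formula for $u_{\mathrm{sol}}$ because $P=\Msol$ outside a compact set, and take existence from the soliton problem. But there is a genuine gap. The one step with real content, pole removal, fails as you set it up, and the problem is more than deferred bookkeeping.

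Write $B=B(Nw_\alpha(z))$ and $\theta=\theta(z;x,t)$, and let $\Msol_1,\Msol_2$ be the columns of $\Msol$.
\begin{itemize}
\item By \eqref{e:normingconst}, $c_j^{(\alpha)}=-C(\lambda_j^{(\alpha)})\Res_{z=\lambda_j^{(\alpha)}}B$. The first column of $\Msol\begin{psmallmatrix}1&0\\-CB\e^{2\theta}&1\end{psmallmatrix}$ is $\Msol_1-CB\e^{2\theta}\Msol_2$, and its residue at $\lambda_j^{(\alpha)}$ is $2c_j^{(\alpha)}\e^{2\theta}\Msol_2(\lambda_j^{(\alpha)})$. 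So your lower factor doubles the pole rather than cancelling it.
\item Your product of factors does not reproduce $V_P$ in either orientation: neither it nor its inverse equals $V_P$. For instance, the inverse has $(1,1)$ entry $1+\frac{CD}{1+CD}$.
\item The repair you suggest (interior on the $-$ side, factors ordered so their inverses multiply to $V_P$) forces $P=\Msol V_P^{-1}$ inside. Its first column $(1+CD)\Msol_1-CB\e^{2\theta}\Msol_2$ has residue $-(2+CD)\,C\,(\Res B)\,\e^{2\theta}\Msol_2$ at $\lambda_j^{(\alpha)}$. A similar nonzero residue, proportional to $\frac{2+CD}{1+CD}$, survives at $\mu_j^{(\alpha)}$.
\end{itemize}
So the orientation is not a free choice. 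The prescribed constants force the interior of $\mathcal{C}_\alpha^+$ to lie on the $+$ side, i.e.\ counterclockwise, as in Figure~\ref{fig:solgas}.

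With that orientation, the paper simply sets
\[
P=\Msol V_P=\Msol\begin{psmallmatrix}1&0\\ CB\e^{2\theta}&1\end{psmallmatrix}\begin{psmallmatrix}1&DB^{-1}\e^{-2\theta}\\0&1\end{psmallmatrix}
\]
inside $\mathcal{C}_\alpha^+$, with no $(1+CD)^{-1}$ in the transformation. Its columns are $\Msol_1+CB\e^{2\theta}\Msol_2$ and $DB^{-1}\e^{-2\theta}\Msol_1+(1+CD)\Msol_2$.
\begin{itemize}
\item At $\lambda_j^{(\alpha)}$, the second column is regular (a zero of $B^{-1}$ times a pole of $\Msol_1$). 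The first column has residue $\bigl(c_j^{(\alpha)}+C\Res B\bigr)\e^{2\theta}\Msol_2(\lambda_j^{(\alpha)})=0$.
\item At $\mu_j^{(\alpha)}$, the first column is regular (a zero of $B$ times a pole of $\Msol_2$). The second column has residue $\bigl[D\,\Res B^{-1}+(1+CD)\chi_j^{(\alpha)}\bigr]\e^{-2\theta}\Msol_1(\mu_j^{(\alpha)})$. This vanishes by \eqref{e:normingconst}, using $\Res_{z=\mu_j^{(\alpha)}}B^{-1}=\frac{z_\alpha-\kappa_\alpha}{N}B'(j-\tfrac12)^{-1}$.
\end{itemize}
Thus the $1+CD$ in $\chi_j^{(\alpha)}$ is the $(2,2)$ entry of $V_P$. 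It is exactly the ``vanishing lower entry times pole of the upper entry'' interaction you flagged, and it belongs in the norming constant, not in the transformation. Your uniqueness argument is fine. Existence of $\Msol$ holds for any nonzero norming constants with distinct poles, with no genericity assumption needed \cite[Appendix B]{BJM18}.
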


\begin{proof}
	A unique solution of RHP~\ref{rhp:2N} exists for any choice of norming constants provided all of the poles are distinct. See \cite[Appendix B]{BJM18} for details. 
	Supposing that the poles and norming constants of RHP~\ref{rhp:2N} are given by \eqref{e:interlacing} and \eqref{e:normingconst} respectively, define 
	\be\label{e:defP}
	P(z)=\begin{dcases}
	\Msol(z)\,
	\begin{psmallmatrix}
        	1 & 0\\
        	C(z)B(Nw_\alpha(z))\e^{2\theta(z;x,t)} & 1
        \end{psmallmatrix}
	\begin{psmallmatrix}
	1 & D(z)B(Nw_\alpha(z))^{-1}\e^{-2\theta(z;x,t)}\\
	0 & 1
	\end{psmallmatrix}, & z \in \mbox{Int}(\mathcal{C}^+_\alpha)\\
	\Msol(z)
	\begin{psmallmatrix}
	1 & 	-C^*(z)\,B\left(Nw^*_\alpha(z)\right)\,\e^{-2\theta( z;x,t)}
	\\[2pt]
	0 & 1
	\end{psmallmatrix}
	\begin{psmallmatrix}
	1 & 0\\[2pt]
	-D^*(z)\,B\left(Nw^*_\alpha(z)\right)^{-1}\,\e^{2\theta( z;x,t)}& 1
	\end{psmallmatrix},
	& z\in \mathrm{Int}(\mathcal{C}^-_\alpha),\\
	\Msol(z),& \mbox{otherwise}.
	\end{dcases}
	\ee
	It's a simple computation to show that $P(z)$ is meromorphic in $\C \setminus (\mathcal{C}_1 \cup \mathcal{C}_2)$ and satisfies the jump relation \eqref{e:Pjump}. The 	normalization condition at infinity follows immediately from the equivalent property for $\Msol$ since the transformation \eqref{e:defP} is an identity outside a compact set. It only remains to show that the transformation defining $P$ removes the poles. First consider a point $\lambda_j^{(\alpha)}$, $j \in \{1,\dots, N\}$. At these points $B(N w_\alpha(z))$ has a simple pole and 
	\[
		\Res_{z=\lambda^{(\alpha)}_j} B( N w_\alpha(z)) = \frac{1}{N w'_\alpha(\lambda^{(\alpha)}_j)} \Res_{w=j} B(w) 
		= \frac{z_\alpha - \kappa_\alpha}{N} \Res_{w=j} B(w).
	\]
	Since the second column of $\Msol(z)$ is analytic at $\lambda_j^{(\alpha)}$, it follows from \eqref{e:defP} that $P$ has at most simple poles at each $\lambda_j^{(\alpha)}$. 
	Using \eqref{e:sol.residues} and \eqref{e:normingconst} we have
	\begin{align*}
    		\Res_{z=\lambda^{(\alpha)}_j} P(z) 
    		&= \lim_{z \to \lambda^{(\alpha)}_j} 
    		\Msol(z) \begin{psmallmatrix} z-\lambda_j^{(\alpha)} & 0 \\ 0 & 1 \end{psmallmatrix} \cdot
		\begin{psmallmatrix}
            		1 & 0\\
            		(z-\lambda_j^{(\alpha)})C(z)B(Nw_\alpha(z)) \e^{2\theta(z;x,t)} & z-\lambda_j^{(\alpha)}
            	\end{psmallmatrix}
    		\begin{psmallmatrix}
    			1 &  D(z)B(Nw_\alpha(z))^{-1}\e^{-2\theta(z;x,t)}\\
    			0 & z-\lambda_j^{(\alpha)}
    		\end{psmallmatrix} \\
		&= \Msol_2((\lambda_j^{(\alpha)})) 
		\begin{bmatrix} c_j^{(\alpha)} \e^{2\theta(\lambda_j^{(\alpha)};x,t)}  & 1\end{bmatrix} 
		\begin{psmallmatrix} 1 & 0 \\ C(\lambda_j^{(\alpha)}) \Res_{z=\lambda^{(\alpha)}_j} B( N w_\alpha(z)) \e^{2\theta(\lambda_j^{(\alpha)};x,t)}   & 0 \end{psmallmatrix} 
		= \begin{psmallmatrix} 0 & 0 \\ 0 & 0 \end{psmallmatrix},
	\end{align*}
	where $\Msol_2(z)$ denotes the second column of the matrix $\Msol(z)$. So $P$ is regular at each $\lambda^{(\alpha)}_j$. 
	Now consider a point $\mu_j^{(\alpha)}$, $j \in \{1,\dots, N\}$. At these points $B(N w_\alpha(z))$ has a simple zero and 
	\[
		\Res_{z=\mu^{(\alpha)}_j} B( N w_\alpha(z))^{-1} = \frac{1}{N w'_\alpha(\mu^{(\alpha)}_j)} \Res_{w=j-1/2} B(w)^{-1} 
		= \frac{z_\alpha - \kappa_\alpha}{N} \Res_{w=j-1/2} B(w)^{-1}.
	\]
	Since the first column of $\Msol(z)$ is regular, the second has a simple pole, and $B$ has a simple zero at $\mu_j^{(\alpha)}$, \eqref{e:defP} implies $P$ has at most a simple pole at $\mu_j^{(\alpha)}$. Computing the residue using \eqref{e:sol.residues} and \eqref{e:normingconst} gives
	\begin{align*}
    		\Res_{z=\mu^{(\alpha)}_j} P(z) 
    		&= \lim_{z \to \mu^{(\alpha)}_j} 
    		\Msol(z) 
		\begin{psmallmatrix}
            		1 & 0\\
            		C(z)B(Nw_\alpha(z)) \e^{2\theta(z;x,t)} & 1
            	\end{psmallmatrix}
    		\begin{psmallmatrix} 1&0 \\ 0 & z-\mu_j^{(\alpha)}  \end{psmallmatrix} \cdot
		\begin{psmallmatrix}
    			z-\mu_j^{(\alpha)} &  (z-\mu_j^{(\alpha)}) D(z)B(Nw_\alpha(z))^{-1}\e^{-2\theta(z;x,t)}\\
    			0 & 1
    		\end{psmallmatrix} \\
		&= \Msol_1((\mu_j^{(\alpha)})) 
		\begin{bmatrix} 1+ \chi_j^{(\alpha)} C(\mu_j^{(\alpha)}) \frac{N B'(j+\frac{1}{2})}{z_\alpha - \kappa_\alpha}  & \chi_j^{(\alpha)} \e^{-2\theta(\mu_j^{(\alpha)};x,t)}  \end{bmatrix} 
		\begin{psmallmatrix} 0 &  \frac{D(\mu_j^{(\alpha)})}{B'( j-1/2)} \frac{z_\alpha - \kappa_\alpha}{N}  \e^{-2\theta(\mu_j^{(\alpha)};x,t)} \\  0 & 1 \end{psmallmatrix} 
		 = \begin{psmallmatrix} 0 & 0 \\ 0 & 0 \end{psmallmatrix}.
	\end{align*}	
	Which shows that poles at $z = \mu_j^{(\alpha)}$ are removed as well. Similar computations can be done to show that $P(z)$ is pole free in $\C^-$ as well. The computation is as above, up to the Schwarz conjugation symmetry. The details are omitted.  
	It follows that the matrix $P(z)$ defined by \eqref{e:defP} is a solution of RHP~\ref{rhp:P}. 
	\end{proof}

\subsection{Large-$N$ limit and the full soliton-gas RHP}
To pass to the continuum limit we need to understand the asymptotic behavior of the term $B(N w_\alpha(z))$ appearing in \eqref{e:Pjump} in the limit as $N \to \infty$. 
The following proposition describes the limiting behavior of $B(N w_\alpha(z))$  in two regimes: (1) on sets bounded away from the curve $\L_\alpha$ on which it accumulates zeros and poles; and (2) in a neighborhood of  the point $\kappa_\alpha$, where the zeros approach the real axis, but outside a sector centered on $\L_\alpha$.
To state our theorem, we define
\be\label{e:sector}
		S_\alpha = S_\alpha(\phi) = \left\{ z \in \C \,:\, \left| \arg\left( \frac{z - \kappa_\alpha}{z-z_\alpha} \right) \right| \in (\phi, \pi]  \right\},  \qquad \phi \in (0,\pi], \quad \alpha \in \{1,2\},
\ee
the sectors of central angle $2 \phi$ centered on the lines $\L_\alpha$. 
\begin{proposition}\label{prop:asympB}
	Fix $\alpha\in\{1,2\}$ and let $B(w)$ and $w_\alpha(z)$ be defined by \eqref{e:defB} and \eqref{e:defw}.  
Define
	\begin{equation}
	\label{Phi}
	\Phi_\alpha(z):=\left(\frac{z-\kappa_\alpha}{z-z_\alpha}\right)^{1/2},
	\end{equation}
	where the square root is chosen analytic in $\mathbb C\setminus \Sigma_\alpha^+$ and normalized by
	$\Phi_\alpha(z)\to 1$ as $z\to\infty$
	\paragraph{(i) }
	For any compact set $K \subset \C^+ \setminus \L_\alpha^+$,
	\be\label{e:asympB1}
	B(N w_\alpha(z))=\Phi_\alpha(z)\left(1+\bigo{\frac{1}{N}} \right),
	\qquad z\in K, \quad N \to \infty
	\ee
	uniformly for $z\in K$.

	\paragraph{(ii) } Let $F(z)$ satisfy $|F(z)| \leq A |z - \kappa_\alpha|$ in an open neighborhood of $z = \kappa_\alpha$. Then
	\be\label{e:asympB2}
		\left| F(z) B(N w_\alpha(z)) - F(z) \Phi_\alpha(z) \right| = \bigo{\frac{1}{N}}, 
		 \qquad z \in S_\alpha(\phi), \quad N \to \infty.
	\ee
The convergence is uniform for $\phi > \phi_0 > 0$. 
\end{proposition}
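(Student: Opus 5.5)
The plan is to write $B$ through its Gamma-function representation \eqref{e:defB} and apply Stirling's formula to the two ratios
\[
B(w)=\frac{\Gamma(w+\frac12)}{\Gamma(w)}\cdot\frac{\Gamma(w-N)}{\Gamma(w-N+\frac12)},\qquad w=Nw_\alpha(z).
\]
The uniform Stirling estimate $\Gamma(\zeta+\frac12)/\Gamma(\zeta)=\zeta^{1/2}\bigl(1+\bigo{\zeta^{-1}}\bigr)$ holds for $|\zeta|\to\infty$ with $|\arg\zeta|\le\pi-\delta$. Applied to both factors, it should give
\[
B(w)=\Bigl(\frac{w}{w-N}\Bigr)^{1/2}\Bigl(1+\bigo{|w|^{-1}}+\bigo{|w-N|^{-1}}\Bigr).
\]
Since $w/(w-N)=w_\alpha/(w_\alpha-1)=(z-\kappa_\alpha)/(z-z_\alpha)$, the leading term is exactly $\Phi_\alpha(z)^2$ under the square root.

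The branch still has to be matched. The product $\prod_\ell\frac{w-\ell+1/2}{w-\ell}$ is single valued off $[1,N]$ and tends to $1$ at infinity. The branch of $(w/(w-N))^{1/2}$ cut along $w\in[0,N]$, i.e. $z\in\L_\alpha$, also tends to $1$ at infinity. Hence this branch is $\Phi_\alpha$ (with the cut taken along $\L_\alpha^+=\Sigma_\alpha^+$ in this section), and no spurious sign arises.

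\textbf{Part (i).} For $z\in K\subset\C^+\setminus\L_\alpha^+$ compact, the points $w_\alpha(z)$ stay a positive distance from the segment $[0,1]$. The set $w_\alpha^{-1}((-\infty,0])$ is the ray from $\kappa_\alpha$ pointing away from $z_\alpha$, which lies in $\C^-$; similarly $w_\alpha^{-1}((-\infty,1])$ meets $\C^+$ only along $\L_\alpha^+$. Therefore $\arg w$ and $\arg(w-N)$ stay uniformly away from $\pm\pi$. Moreover $|w|,|w-N|\ge cN$ on $K$, so both Stirling error terms are $\bigo{1/N}$ uniformly, which gives \eqref{e:asympB1}.

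\textbf{Part (ii).} The difficulty is near $\kappa_\alpha$, where $w=Nw_\alpha(z)$ need not be large. I would read $S_\alpha(\phi)$ as the region whose points keep an angle at least $\phi$ from the direction of $\L_\alpha$ as seen from $\kappa_\alpha$, i.e. away from the accumulating poles. Then $|\arg w|\ge\phi'$ and $|\arg(w-N)|$ is bounded away from $\pi$ near $\kappa_\alpha$, and points far from $\kappa_\alpha$ are covered by part (i). Split into two cases.

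\emph{Case $N|z-\kappa_\alpha|\ge M$.} Here $|w|\ge cM$ is large and $|w-N|\ge cN$. Stirling gives $|B-\Phi_\alpha|\le C|\Phi_\alpha|\bigl(1/(N|z-\kappa_\alpha|)+1/N\bigr)$. Multiplying by $|F|\le A|z-\kappa_\alpha|$ and using that $\Phi_\alpha$ is bounded near $\kappa_\alpha$, we get $\bigo{1/N}$.

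\emph{Case $N|z-\kappa_\alpha|<M$.} Here $|F|\le AM/N$, so it suffices to show that $B(Nw_\alpha(z))$ and $\Phi_\alpha$ are uniformly bounded. For $\Phi_\alpha$ this is clear. For $B$, write it as $\frac{\Gamma(w+1/2)}{\Gamma(w)}$ times the second ratio. The second ratio is $\bigo{N^{-1/2}}$ by Stirling, since $|w-N|\ge N/2$ with good argument. The first ratio, on the bounded set $\{|w|\le CM\}$ intersected with the sector, is continuous and bounded, provided that set avoids the poles of $\Gamma(w+\frac12)$ at $-\frac12,-\frac32,\dots$. If it does not, I would instead bound $B$ directly by the product form, which has poles only at $w=1,\dots,N$; these sit at distance $\gtrsim\sin\phi$ from the sector.

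The main obstacle is exactly this uniform control of the Stirling remainder and of $B$ near $\kappa_\alpha$ on the sector. Getting the sector geometry right, namely that $\arg w$ and $\arg(w-N)$ stay in the admissible range uniformly in $\phi>\phi_0$, is where care is needed. The uniformity in $\phi$ then follows because all constants depend only on $\phi_0$.
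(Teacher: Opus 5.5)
Your part (i) is correct. For $K\subset\C^+\setminus\L_\alpha^+$ both $w_\alpha(K)$ and $w_\alpha(K)-1$ stay a positive distance from $(-\infty,0]$, so the direct Gamma representation plus Stirling suffices there. (The paper uses a second representation only because it treats compact subsets of all of $\C\setminus\L_\alpha^+$.) Your reading of $S_\alpha(\phi)$ as $\{|\arg w_\alpha(z)|>\phi\}$ is also the one the paper actually uses.

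Part (ii), however, rests on a false geometric claim. Since $w_\alpha(\kappa_\alpha)=0$, you have $w-N=N(w_\alpha(z)-1)\to-N$ as $z\to\kappa_\alpha$. So near $\kappa_\alpha$ the argument of $w-N$ is close to $\pi$, not bounded away from it, which is exactly outside the range of the Stirling ratio formula. Moreover, by the reflection formula
\[
\frac{\Gamma(w-N)}{\Gamma(w-N+\frac12)}=\cot(\pi w)\,\frac{\Gamma(N+\frac12-w)}{\Gamma(N+1-w)},
\]
so this factor has a pole at every integer $w$, in particular at $w=0$, i.e. at $z=\kappa_\alpha$. It is therefore not $\bigo{N^{-1/2}}$ uniformly on $\{|w|\le CM\}$, as you claim in your second case; the pole is cancelled only by the zero of $\Gamma(w+\frac12)/\Gamma(w)$ at $w=0$. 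In your first case, the estimate $|B-\Phi_\alpha|\le C|\Phi_\alpha|\,(1/(N|z-\kappa_\alpha|)+1/N)$ happens to be true, but it does not follow from applying Stirling to your factorization. That splitting of $B$ is simply the wrong one in the regime $|w|\ll N$. On the part of your sector opposite to $\L_\alpha$, in $\C^-$, $w$ also lies near the negative axis, so even your first factor leaves the Stirling sector there.

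The missing idea is to rewrite $B$ with $\Gamma(s)\Gamma(1-s)=\pi/\sin(\pi s)$ before expanding, as the paper does:
\[
B(w)=\frac{\Gamma(1-w)}{\Gamma(\frac12-w)}\,\frac{\Gamma(N+\frac12-w)}{\Gamma(N+1-w)}.
\]
Near $\kappa_\alpha$, $N-w=N(1-w_\alpha(z))$ has argument near $0$, and $|\arg(-w)|<\pi-\phi$ is exactly the sector condition. Stirling now legitimately gives $B=\sqrt{-w/(N-w)}\,(1+\bigo{1/w}+\bigo{1/(N-w)})$, which is what your first case needs.

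For bounded $|w|$, keep the first ratio exact:
\[
B(Nw_\alpha(z))=\Phi_\alpha(z)\,\frac{\Gamma(1-w)}{\sqrt{-w}\,\Gamma(\frac12-w)}\,(1+\bigo{1/N}).
\]
The paper then uses $G(w)=w\,[1-\Gamma(1-w)/(\sqrt{-w}\,\Gamma(\frac12-w))]$, which vanishes at $w=0$ and tends to $1/8$ at infinity, hence is bounded on $|\arg w|\ge\phi$. Together with $|F|\le A|z_\alpha-\kappa_\alpha|\,|w|/N$, this handles both of your cases at once.

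Your fallback of bounding the finite product directly would repair the boundedness claim in your second case. It cannot, however, produce the asymptotics required in your first case. For that you need the reflection formula or an equivalent device, for instance writing the second factor as $\cot(\pi w)\,\Gamma(N+\frac12-w)/\Gamma(N+1-w)$ and using $\cot(\pi w)\to\mp\mathrm{i}$ exponentially fast as $\pm\Im w\to+\infty$.
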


\begin{proof}
	The result relies on the following property of Gamma functions \cite[\href{https://dlmf.nist.gov/5.11.13}{(5.11.13)}]{dlmf};  for fixed $a ,b$ 
	\be\label{e:stirling}
	\frac{\Gamma(z+a)}{\Gamma(z+b)}
		= z^{a-b} \left[ 1+ \frac{(a-b)(a+b-1)}{2z} + \bigo{\frac{1}{z^2}} \right], \qquad z \to \infty, \quad |\arg(z)| < \pi - \epsilon
	\ee
	Applying this result directly to \eqref{e:defB} one has that 
	\begin{subequations}\label{e:Basymp}
	\be
		B(w) = \sqrt{\frac{w}{w-N}} \left[1 + \bigo{\frac{1}{w}} + \bigo{\frac{1}{w-N}} \right],  \qquad w \to \infty, \quad | \arg{w-N} | < \pi - \epsilon,
	\ee
	To get an expansion of $B(w)$ for large $w$ in the missing sector we make use of the reflection formula $\Gamma(z) \Gamma(1-z) = \frac{\pi}{\sin(\pi z)}$ to write
	\[
		B(w) = \frac{\Gamma(1-w)} {\Gamma(\tfrac{1}{2} -w)}\frac{\Gamma(N + \frac{1}{2} - w)}{\Gamma(1+N-w)} .
	\]
	Applying \eqref{e:stirling} to this representation for $B(w)$ gives
	\be\label{e:blaschke2}
		B(w) = \sqrt{\frac{-w}{N-w}}\left[ 1+ \bigo{\frac{1}{-w}} + \bigo{\frac{1}{N-w}}\right],  \qquad w \to \infty, \quad |\arg(w)| >  \epsilon. 
	\ee
	\end{subequations}
	Recognizing that the map $w_\alpha(z)$, $\alpha \in \{1,2\}$, defined by \eqref{e:defw} maps $\L^+_\alpha$ to $[0,1]$, any point $z$ in a compact subset $K$ of $\C \setminus \L^+_\alpha$ satisfies $\min\limits_{z\in K, w \in [0,N]} |Nw_\alpha(z) - w | > cN $ for some fixed $c = c(K) > 0$. Combining this with the overlapping asymptotics in \eqref{e:Basymp} gives \eqref{e:asympB1} upon observing that $\sqrt{\frac{w_\alpha(z)}{w_\alpha(z)-1}} = \Phi_\alpha(z)$. 
	
	The function $B(N w_\alpha(z))$ is not uniformly close to $\Phi_\alpha(z)$ for $z$ near $\kappa_\alpha$, the real endpoint of $\L_\alpha^+$. This is because $w_\alpha( \kappa_\alpha)  = 0$, so there is a small $O(N^{-1})$ disk centered at $z = \kappa_\alpha$ where we cannot uniformly approximate the $\Gamma(1-N w_\alpha(z))/\Gamma(-N w_\alpha(z))$ in \eqref{e:blaschke2} using \eqref{e:stirling}. Fix a small, $N$-independent, neighborhood $U$ of $\kappa_\alpha$ bounded away from $z_\alpha$, and take $S_\alpha(\phi)$ as in \eqref{e:sector}. Then starting from \eqref{e:blaschke2} 
\[
	B(w) = \frac{1}{\sqrt{N-w}}\frac{\Gamma(1-w)}{\Gamma(\frac{1}{2}-w)} \left[ 1 + \bigo{\frac{1}{N-w}} \right], \qquad N \to \infty, \quad  |\arg(w)| > \epsilon.
\]
It follows that 
\[
	B(N w_\alpha(z)) = \Phi_\alpha(z) \frac{\Gamma(1-N w_\alpha(z))}{\sqrt{-N w_\alpha(z)} \Gamma(\frac{1}{2} -N w_\alpha(z))} \left[ 1 + \bigo{\frac{1}{N}} \right],  \qquad z \in U.
\]
Now suppose that $F(z)$ is any function satisfying the linear bound $|F(z)| < A |z - \kappa_\alpha|$ for $z \in U$. Then 
\[
	\left| F(z)  B(Nw_\alpha(z)) - F(z) \Phi_\alpha(z)   \right| \leq  A | z_\alpha - \kappa_\alpha |   \left| \Phi_\alpha(z) G(N w_\alpha(z)) \right| \left[ 1+ \bigo{\frac{1}{N}} \right], 
	\qquad  N \to \infty, \quad  |\arg(w)| > \epsilon,
\]
where
\[
	G(w) := w\left[ 1 - \frac{\Gamma(1-w) }{\sqrt{-w}  \Gamma(\tfrac{1}{2} - w)} \right].
\]	
This function is analytic for $w \in \C \setminus [0,\infty)$, and satisfies both $G(0) = 0$ and, again using \eqref{e:stirling}, 
\[
	G(w) = \frac{1}{8} + \bigo{w^{-1}}, \quad w \to \infty, \qquad |\arg(w)| > \epsilon. 
\]
This shows that $G(w)$ is uniformly bounded in any closed sector $| \arg w | > \phi$ for fixed $\phi \in (0,\pi)$. 
Since the function $w=w_\alpha(z)$ maps $S_\alpha(\phi)$ onto the region $|\arg w| > \phi$, it follows that $|G(N w_\alpha(z))|$ is bounded independent of $N$ for $z \in S_\alpha(\phi)$. 
The bound \eqref{e:asympB2} follows immediately. 
\end{proof}

Proposition \ref{prop:asympB}, combined with Assumption~\ref{assume}, justify replacing the function $B(N w_\alpha(z))$ with $\Phi_\alpha(z)$, $\alpha \in \{1,2\}$ in the large $N$ limit. This defines a continuum Riemann-Hilbert problem.

\medskip
\noindent
\textbf{RHP 4.3 (Continuum limit problem).}\label{rhp:Pi}
Find a $2\times 2$ matrix-valued function $P_\infty(z)=P_\infty(z; x,t)$ such that
\begin{enumerate}
	\item $P_\infty(z)$ is analytic in $\C\setminus (\mathcal{C}_1\cup\mathcal{C}_2)$.
	
	\item $P_\infty(z)=I+O(z^{-1})$ as $z\to\infty$.
	
	\item For $z\in \mathcal{C}_1 \cup \mathcal{C}_2$, $P_\infty$ has continuous boundary values which satisfy
	\be
	\begin{gathered}
	P_{\infty,+}(z)=P_{\infty,-}(z) V_{P_\infty}(z;x,t) , \\
	V_{P_\infty}(z;x,t) = \begin{dcases}
	\begin{pmatrix}
		1 & D(z)\Phi_\alpha(z)^{-1}\e^{-2\theta(z; x,t)}\\
		C(z)\Phi_\alpha(z)\e^{2\theta(z; x,t)} & 1+C(z)D(z)
	\end{pmatrix}, 
	& z\in \mathcal{C}_\alpha^+, \quad  \alpha=1,2,\\
	\begin{pmatrix}
		1+C^*(z)D^*(z) & -C^*(z)\Phi_\alpha^*(z)^{-1}\e^{-2\theta( z; x,t)}\\
		-D^*(z)\Phi_\alpha^*(z)\e^{2\theta( z; x,t)} & 1
	\end{pmatrix}, 
	& z\in \mathcal{C}_\alpha^-, \quad \alpha =1,2.
	\end{dcases}
	\end{gathered}
	\ee
\end{enumerate}

\begin{proposition}\label{bounded}
	A unique solution of RHP~\ref{rhp:Pi} exists for choice of functions $C(z)$ and $D(z)$ which satisfy Assumption~\ref{assume}. 
	The solution $P_\infty$ assumes continuous (and bounded)  boundary values on $\mathcal{C}$. 
\end{proposition}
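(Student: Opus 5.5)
We prove existence and uniqueness of the solution of RHP~\ref{rhp:Pi} with the standard vanishing-lemma argument of Zhou, as in \cite{GJZZ} and \cite{BJM18}. The steps are: make the jump contour Schwarz-symmetric, convert the problem into a singular integral equation with a Fredholm operator of index zero, and show that the homogeneous problem has only the trivial solution.

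\emph{Symmetric contour.} Let $\Gamma=\mathcal{C}_1\cup\mathcal{C}_2$. The contours $\mathcal{C}_\alpha^-$ are complex conjugates of $\mathcal{C}_\alpha^+$, and the jumps on $\mathcal{C}_\alpha^-$ are built from those on $\mathcal{C}_\alpha^+$ by Schwarz reflection. The two orientations (each $\mathcal{C}_\alpha^+$ is a closed loop) must be fixed so that $V_{P_\infty}(\bar z)=\overline{V_{P_\infty}(z)}^{\,-T}$ up to conjugation by $\sigma_2$, which is the usual Zhou symmetry condition.

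\emph{Regularity of the jump.} On $\mathcal{C}_\alpha^+$ we have $\det V_{P_\infty}=1+CD-CD=1$. Assumption~\ref{assume} gives $C,D=\mathcal{O}(z-\kappa_\alpha)$ while $\Phi_\alpha^{\pm1}=\mathcal{O}(|z-\kappa_\alpha|^{\mp1/2})$. Hence $C\Phi_\alpha$ and $D\Phi_\alpha^{-1}$ are $\mathcal{O}(|z-\kappa_\alpha|^{1/2})$, so $V_{P_\infty}\to I$ Hölder continuously at the self-intersection points $\kappa_\alpha$. The contours avoid $\Sigma^+_\alpha$ except at $\kappa_\alpha$, so $V_{P_\infty}$ is Hölder continuous on $\Gamma$ and equals $I$ at the nodes, and the compatibility conditions hold trivially.

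\emph{Fredholm reduction.} Factor $V=(I-w_-)^{-1}(I+w_+)$, for example by the triangular factorization already visible in \eqref{e:defP}. Then $\mu=I+C_{w}\mu$ on $L^2(\Gamma)$, with $C_w\mu=C_+(\mu w_-)+C_-(\mu w_+)$. Since $\Gamma$ is compact and $w_\pm$ are Hölder continuous and vanish at the nodes, $I-C_w$ is Fredholm. A continuous deformation $C,D\mapsto sC,sD$ with $s\in[0,1]$ keeps $1+s^2CD$ away from zero if $\delta$ is small enough. If this fails, use instead the general index-zero result for jumps with $\det V=1$ on bounded contours. Either way the index is zero, so $I-C_w$ is invertible once its kernel is trivial.

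\emph{Vanishing lemma (main obstacle).} Let $\mathcal{N}$ solve the homogeneous problem, $\mathcal{N}=\mathcal{O}(z^{-1})$. Set $H(z)=\mathcal{N}(z)\,\mathcal{N}(\bar z)^{\dagger}$ for $z\in\C^+$ and integrate over $\R$; $\R$ is not a jump contour, since the loops only touch it at $\kappa_\alpha$. The difficulty is that the jumps lie on closed loops in $\C^+$, not on $\R$, so Zhou's positivity on $\R$ is not directly available.

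The plan is to first collapse the loops to the real axis. Inside $\mathrm{Int}(\mathcal{C}_\alpha^\pm)$, undo the factorization by multiplying by the triangular factors, which are analytic there apart from $\L_\alpha$. This moves the jump onto $\L_\alpha^\pm$, where it is the jump of $\Phi_\alpha$, and more precisely onto $\Sigma^\pm_\alpha$. It then becomes a jump of the form in RHP~\ref{RHP:Mtilde}, with $r_1=-\i C\Phi_\alpha$ and $r_2=\i D\Phi_\alpha^{-1}/(1+CD)$ as in Theorem~\ref{thm:solgas}.

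For such $\Sigma$-jumps, the symmetric structure $V(\bar z)=\sigma_2\overline{V(z)}\sigma_2$ and the orientation anti-symmetry $\bar\Sigma=-\Sigma$ make the contributions from $\Sigma^+$ and $\Sigma^-$ cancel in $\oint(H-H^\dagger)$. What remains is $\int_\R \mathcal{N}_-\mathcal{N}_-^\dagger=0$, so $\mathcal{N}_-\equiv0$ and hence $\mathcal{N}\equiv0$. This is the uniqueness argument of Appendix~\ref{a:proof} in the reflectionless, pole-free case.

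If checking the cancellation directly proves awkward, the fallback is to verify the positivity of $V+V^\dagger$ on the real-crossing part. The $\kappa_\alpha$ endpoints are harmless because all boundary values there are bounded. Uniqueness of $P_\infty$ then follows from Liouville applied to $P_\infty\tilde P_\infty^{-1}$, using $\det V=1$. Continuity and boundedness of the boundary values follow from $\mu\in L^2$ and the Hölder regularity of $V$ via Plemelj–Privalov.
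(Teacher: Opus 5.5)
Your skeleton matches the paper's: existence from Zhou's vanishing lemma \cite{Zhou}, uniqueness from Liouville applied to $P_\infty\widetilde P_\infty^{-1}$, and consistency at $\kappa_\alpha$ from $V_{P_\infty}(\kappa_\alpha)=I$. However, the step you call the main obstacle is not an obstacle, and the detour you take around it is where your argument, as written, does not go through.

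\textbf{Zhou's lemma applies directly to the loops.} Orient both $\mathcal{C}_\alpha^\pm$ counterclockwise, with interiors on the $+$ side as \eqref{e:defP} forces. Then the jumps satisfy $V_{P_\infty}(\bar z)=V_{P_\infty}(z)^{-\dagger}$ for $z\in\mathcal{C}_\alpha^+$, since the $\mathcal{C}_\alpha^-$ jump is the limit of the one in \eqref{e:Pjump}, which has exactly this symmetry. Conjugation also sends the $+$ side at $z$ to the $+$ side at $\bar z$. So for a homogeneous solution $\mathcal{N}=O(z^{-1})$, the function $H(z)=\mathcal{N}(z)\mathcal{N}(\bar z)^\dagger$ satisfies $H_+=\mathcal{N}_-(z)V(z)V(\bar z)^\dagger\mathcal{N}_-(\bar z)^\dagger=H_-$ on $\mathcal{C}^+$. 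Hence $H$ is analytic in $\C^+$ and $O(z^{-2})$. Since $\mathcal{C}\cap\R=\{\kappa_1,\kappa_2\}$ and the rest of $\R$ lies in the exterior component, where $\mathcal{N}$ has no jump, Cauchy's theorem gives $\int_\R\mathcal{N}\mathcal{N}^\dagger\,\d x=0$. Thus $\mathcal{N}\equiv0$ outside the loops, and then inside by the jump relation. No positivity on $\R$ is involved; your fallback about $V+V^\dagger$ on a real-crossing part refers to a jump the $P_\infty$ problem does not have.

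\textbf{The collapse onto $\L_\alpha^\pm$ buys nothing and, taken literally, leaves the setting you cite.} $\Phi_\alpha$ blows up like $|z-z_\alpha|^{-1/2}$ at $z_\alpha$. In the loop problem this point sits harmlessly inside $\mathcal{C}_\alpha^+$, and Assumption~\ref{assume} does not force $C(z_\alpha)=0$. After collapsing, the $(2,1)$ entry of \eqref{e:jumpWinf} is proportional to $C(1+CD)\Phi_\alpha$, so it is not in $L^2(\L_\alpha^+)$. The collapsed $\widetilde{\mathcal N}$ and its $H$ may a priori behave like $|z-z_\alpha|^{-1/2}$ and $|z-z_\alpha|^{-1}$ there. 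The $L^2$ Beals--Coifman argument of Appendix~\ref{a:proof} therefore does not cover this problem, and the Cauchy-theorem step would need extra work at $z_\alpha$. What rescues you is that the collapse is Schwarz-symmetric: you multiply by $V_{P_\infty}(z)^{-1}$ inside $\mathcal{C}_\alpha^+$ and by $V_{P_\infty}(\bar z)^{\dagger}$ inside $\mathcal{C}_\alpha^-$. Hence $\widetilde{\mathcal N}(z)\widetilde{\mathcal N}(\bar z)^\dagger=\mathcal{N}(z)\mathcal{N}(\bar z)^\dagger$, which is just the direct argument above, so drop the collapse.

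\textbf{Boundary regularity follows a different route.} The paper uses the analyticity of $C$ and $D$ to deform $\mathcal{C}$ locally. The solution of the deformed problem then shows that $P_{\infty,\pm}$ are restrictions of analytic functions, hence continuous and bounded. Your Plemelj--Privalov route is a legitimate alternative, but only after a bootstrap showing that the $L^2$ solution $\mu$ is H\"older. That step comes from classical H\"older theory on contours with nodes where $V=I$; it does not follow from $\mu\in L^2$ alone, because Plemelj--Privalov needs H\"older input.

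\textbf{Two small slips.}
\begin{itemize}
\item $\Phi_\alpha$ vanishes like $|z-\kappa_\alpha|^{1/2}$ at $\kappa_\alpha$, so your exponents are reversed. Your bound still holds: $C\Phi_\alpha=O(|z-\kappa_\alpha|^{3/2})$ and $D\Phi_\alpha^{-1}=O(|z-\kappa_\alpha|^{1/2})$.
\item The homotopy $C,D\mapsto sC,sD$ needs no condition on $1+s^2CD$, because $\det V_s\equiv1$. A small $\delta$ would not guarantee such a condition anyway.
\end{itemize}
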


\begin{proof}
	This follows from a very standard argument, we sketch the proof in broad strokes omitting the details. 
	The uniqueness of solutions, provided they exist, follows from a standard Liouville argument applied to the ratio of any two solutions. 
	The existence of solutions follows from the application of Zhou's vanishing lemma \cite[Theorem~9.3]{Zhou}. We remark that \eqref{e:vanishing} guarantees that $V_{P_\infty}(\kappa_\alpha) = I$, $\alpha=1,2$, which ensures the jumps are locally self-consistent at the points of self-intersection. Zhou's vanishing lemma only establishes existence of a solution with $L^2$ boundary values. However, the analyticity condition in Assumption~\ref{assume} implies the contours are locally deformable. By considering the solution to the problem with deformed jumps, it follows the original solution takes continuous boundary values on $\mathcal{C}$ which are everywhere bounded.
\end{proof}

\begin{proposition}\label{prop:gas.error}
Let $P(z)$ be the unique solution to RHP~\ref{rhp:P}, and let $P_\infty(z)$ be the unique solution to RHP~\ref{rhp:Pi}. Define
\be\label{e:error}
\E(z)=P(z)P_\infty(z)^{-1}.
\ee
Then, for $N$ sufficiently large, $\E(z)$ satisfies a normalized Riemann--Hilbert problem on
$\mathcal{C}_1\cup \mathcal{C}_2$, is analytic in $\C\setminus(\mathcal{C}_1\cup \mathcal{C}_2)$, and
\[
\E(z)=I+O(z^{-1}),\qquad z\to\infty .
\]
Moreover,
\[
\E(z)=I+O(N^{-1/2}),\qquad N\to\infty,
\]
uniformly for $z$ in any compact subset of $\mathbb C\setminus(\mathcal{C}_1\cup \mathcal{C}_2)$.

\end{proposition}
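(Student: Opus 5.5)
Define $\E=PP_\infty^{-1}$ and compare the jumps of $P$ and $P_\infty$ directly. Since both problems are normalized at infinity and have unimodular jumps, $\det P=\det P_\infty\equiv1$, so $\E$ is analytic off $\mathcal{C}_1\cup\mathcal{C}_2$ with $\E=I+O(z^{-1})$. On $\mathcal{C}$ it satisfies $\E_+=\E_-V_\E$ with
\[
V_\E=P_{\infty,-}V_PV_{P_\infty}^{-1}P_{\infty,-}^{-1}.
\]
By Proposition~\ref{bounded} the boundary values $P_{\infty,\pm}$ are bounded on $\mathcal{C}$. Hence $\|V_\E-I\|$ is controlled by $\|V_P-V_{P_\infty}\|$ pointwise, and it suffices to estimate the differences of the off-diagonal entries,
\[
\bigl(B(Nw_\alpha)-\Phi_\alpha\bigr)C\e^{2\theta}\quad\text{and}\quad \bigl(B(Nw_\alpha)^{-1}-\Phi_\alpha^{-1}\bigr)D\e^{-2\theta},
\]
together with their conjugate counterparts on $\mathcal{C}^-_\alpha$. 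Here $(x,t)$ is fixed, so $\e^{\pm2\theta}$ is bounded on the compact contour.

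Split each $\mathcal{C}_\alpha^+$ into two pieces: a portion away from $\kappa_\alpha$, and a small disk $U$ around $\kappa_\alpha$.

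Away from $\kappa_\alpha$, the contour is a compact set disjoint from $\L_\alpha^+$. There Proposition~\ref{prop:asympB}(i) gives $B=\Phi_\alpha(1+O(N^{-1}))$ uniformly, and similarly for $B^{-1}$, since $\Phi_\alpha$ is bounded away from $0$ and $\infty$ on that piece. This yields $V_\E-I=O(N^{-1})$ there.

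Inside $U$, the contour meets $\kappa_\alpha$ transversally to $\L_\alpha^+$. Choosing $\mathcal{C}_\alpha^+$ to leave $\kappa_\alpha$ at angles bounded away from the direction of $\L_\alpha^+$ places $\mathcal{C}_\alpha\cap U$ inside a sector $S_\alpha(\phi)$. Apply Proposition~\ref{prop:asympB}(ii) with $F=C\e^{2\theta}$, which is admissible by the vanishing condition \eqref{e:vanishing}. For the $B^{-1}$ entry one needs the analogous statement for $F B^{-1}$. I would obtain it by the same argument applied to $B(w)^{-1}$, the reciprocal Gamma ratio: the function $w\bigl[1-\sqrt{-w}\,\Gamma(\frac12-w)/\Gamma(1-w)\bigr]$ is again bounded in $|\arg w|>\phi$ and vanishes at $0$.

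This step is the delicate one. The uniformity in (ii) is only claimed in the sector, and the contour genuinely approaches the accumulation point $\kappa_\alpha$. If the bound were only available in a weaker form, I would fall back on an $L^2$ estimate instead of $L^\infty$. Near $\kappa_\alpha$ the difference is bounded by $|z-\kappa_\alpha|$ times a bounded function for $|z-\kappa_\alpha|\lesssim N^{-1}$, which contributes $O(N^{-1})$ to $\|V_\E-I\|_{L^\infty}$ and $O(N^{-3/2})$ in $L^2$. I would also not rely on the decay $1/(N|z-\kappa_\alpha|)$ outside that disk. This is exactly where the weaker rate $N^{-1/2}$ in the statement comes from: the mixed $L^2\cap L^\infty$ control at the nonsmooth self-intersection point, where contours meet at $\kappa_\alpha$ and $V_\E\to I$, is the main obstacle.

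Having $\|V_\E-I\|_{L^2\cap L^\infty(\mathcal{C})}=O(N^{-1/2})$, I would invoke small-norm theory. The Cauchy operator $C_-$ is bounded on $L^2(\mathcal{C})$ for this finite union of Lipschitz closed curves. Therefore $I-C_{V_\E}$ is invertible for $N$ large, with $\mu-I=O(N^{-1/2})$ in $L^2$. The representation
\[
\E(z)=I+\frac1{2\pi\i}\int_{\mathcal{C}}\frac{\mu(s)(V_\E(s)-I)}{s-z}\,\d s
\]
then gives $\E=I+O(N^{-1/2})$ uniformly on compact subsets of $\C\setminus(\mathcal{C}_1\cup\mathcal{C}_2)$, by Cauchy–Schwarz with the distance to the contour. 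The same representation yields the normalization $\E=I+O(z^{-1})$. Uniqueness of this $\E$ follows from the Liouville argument already used in Proposition~\ref{bounded}.
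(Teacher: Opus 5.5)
Your overall architecture is the same as the paper's: compare the jumps, bound $V_\E-I$ on $\mathcal{C}_1\cup\mathcal{C}_2$, then use small-norm theory and Cauchy--Schwarz. The gap is in the one estimate that determines the rate. That is the $(1,2)$ entry $D(z)\bigl(B(Nw_\alpha)^{-1}-\Phi_\alpha^{-1}\bigr)\e^{-2\theta}$ near $\kappa_\alpha$.

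\textbf{The claimed analogue of Proposition~\ref{prop:asympB}(ii) fails.} An $O(N^{-1})$ bound for $FB^{-1}-F\Phi_\alpha^{-1}$ is false. The proof of (ii) uses that $\Phi_\alpha$ is bounded near $\kappa_\alpha$, but $\Phi_\alpha^{-1}\sim(z-\kappa_\alpha)^{-1/2}$ blows up there. Boundedness of your function $w\bigl[1-\sqrt{-w}\,\Gamma(\tfrac12-w)/\Gamma(1-w)\bigr]$ only yields $|FB^{-1}-F\Phi_\alpha^{-1}|\lesssim N^{-1}|\Phi_\alpha^{-1}|$.

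\textbf{The fallback is also wrong for this entry.} We have $B(0)=\Gamma(N+\tfrac12)/(\sqrt{\pi}\,\Gamma(N+1))\sim(\pi N)^{-1/2}$. So $B(Nw_\alpha)^{-1}$ has size $\sqrt N$ in the $N^{-1}$-disk, not size $O(1)$. Take points of $\mathcal{C}_\alpha^+$ with $|z-\kappa_\alpha|\asymp N^{-1}$ (they exist, since the contour passes through $\kappa_\alpha$). There both $DB^{-1}$ and $D\Phi_\alpha^{-1}$ have size $N^{-1/2}$ when $D'(\kappa_\alpha)\neq0$. They do not cancel: their ratio is $\sqrt{-s}\,\Gamma(\tfrac12-s)/\Gamma(1-s)\,(1+O(N^{-1}))$ with $s=Nw_\alpha(z)$, which is not close to $1$ for $|s|\asymp1$.

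\textbf{Consequences.} $\|V_\E-I\|_{L^\infty}$ is genuinely of order $N^{-1/2}$. This is the true source of the exponent $1/2$; it is not an $L^2\cap L^\infty$ effect at the self-intersection point. Also, by declining to use the decay $1/(N|z-\kappa_\alpha|)$, you leave the zone $N^{-1}\lesssim|z-\kappa_\alpha|\lesssim 1$ with no bound at all. So the asserted $\|V_\E-I\|_{L^2\cap L^\infty}=O(N^{-1/2})$ is never derived, and without a small $L^\infty$ bound the Neumann-series step cannot start.

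\textbf{The fix} is the paper's factorization:
\[
D\bigl(B^{-1}-\Phi_\alpha^{-1}\bigr)=D\Phi_\alpha^{-1}\bigl(\Phi_\alpha B^{-1}-1\bigr).
\]
\begin{itemize}
\item $D\Phi_\alpha^{-1}=O(|z-\kappa_\alpha|^{1/2})$, because $D$ vanishes linearly against the square-root singularity of $\Phi_\alpha^{-1}$.
\item $\Phi_\alpha B^{-1}-1=O\bigl(\min\{1,(N|z-\kappa_\alpha|)^{-1}\}\bigr)$, from the Gamma-ratio representation in the proof of Proposition~\ref{prop:asympB}.
\end{itemize}
The product is largest at $|z-\kappa_\alpha|\asymp N^{-1}$ and is $O(N^{-1/2})$ uniformly on the contour.

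The $(2,1)$ entry $C(B-\Phi_\alpha)\e^{2\theta}$ is $O(N^{-1})$ by (ii), as you say. Your reduction $V_\E-I=P_{\infty,-}(V_P-V_{P_\infty})V_{P_\infty}^{-1}P_{\infty,-}^{-1}$ is fine, since $V_{P_\infty}^{-1}$ is bounded because $D\Phi_\alpha^{-1}\to0$ at $\kappa_\alpha$. With the corrected $L^\infty$ bound, your small-norm conclusion goes through as written.
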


\noindent The proof is given in Appendix~\ref{app:solgas.error}. 

The final step in our analysis of the soliton gas problem is to collapse the artificially introduced interpolation contours $\mathcal{C}$ by collapsing the jumps of the continuum problem back onto the contours $\L_\alpha$ on which the solitons have been ``condensed" in the soliton gas limit. 
Define $W_\infty(z;x,t)$ 
\be\label{e:defWinf}
W_\infty(z;x,t)=
\begin{cases}
	P_\infty(z;x,t)
	\begin{pmatrix}
		1+C(z)D(z) & -D(z)\Phi_\alpha(z)^{-1}\e^{-2\theta(z; x,t)}\\
		-C(z)\Phi_\alpha(z)\e^{2\theta(z; x,t)} & 1
	\end{pmatrix}, & z\in \mbox{Int}(\mathcal{C}_\alpha^+),\ \alpha=1,2,\\[2.2ex]
	P_\infty(z)
	\begin{pmatrix}
		1 & C^*(z)\Phi_\alpha^*(z)^{-1}\e^{-2\theta( z; x,t)}\\
		D^*(z)\Phi_\alpha^*(z)\e^{2\theta( z; x,t)} & 1+C^*(z)D^*(z)
	\end{pmatrix}, & z\in \mbox{Int}(\mathcal{C}_\alpha^-),\ \alpha=1,2,\\[2.2ex]
	P_\infty(k), & \text{elsewhere}.
\end{cases}
\ee
\begin{proposition}
The matrix $W_\infty(z;x,t)$ defined in \eqref{e:defWinf} has the following properties.

\begin{enumerate}
	\item $W_\infty(z;x,t)$ has no jumps across the contours $\mathcal{C}_\alpha^\pm$, for $\alpha\in\{1,2\}$, and therefore is analytic off the segments $\L_\alpha^\pm$.
	
    \item On $\L_\alpha^+$, $W_\infty(z;x,t)$ has the following jump relation
\be\label{e:jumpWinf}
W_{\infty,+}(z;x,t)=W_{\infty,-}(z;x,t)\begin{pmatrix}
	1+2C(z)D(z) & -2D(z)\Phi_\alpha(z)^{-1}\e^{-2\theta(z; x,t)}\\
	-2C(z)(1+C(z)D(z))\Phi_\alpha(z)\e^{2\theta(z; x,t)} & 1+2C(z)D(z)
\end{pmatrix}.
\ee 
The jumps on $\L_\alpha^-$ follow by Schwarz symmetry.
     
     \item Setting 
     \be
     r_1(z)=-\mathrm{i} C(z)\Phi_\alpha(z),\qquad
     r_2(z)=\frac{\mathrm{i} D(z)\Phi_\alpha(z)^{-1}}{1+C(z)D(z)},
     \qquad z\in \L_\alpha^+, 
     \ee
     the jump \eqref{e:jumpWinf} coincides with the jump of RHP \ref{RHP:Mtilde} on $\Sigma$. Under the endpoint condition \eqref{e:vanishing} from Assumption \ref{assume}, both $r_1$ and $r_2$ extend
     continuously to $z=\kappa_\alpha$, and in fact
     \[
     r_1(\kappa_\alpha)=r_2(\kappa_\alpha)=0.
     \]
\end{enumerate}
Consequently, the corresponding NLS solution is recovered from
\[
u(x,t)=\lim_{z\to\infty} 2\mathrm{i} z\, W_{\infty,12}(z;x,t).
\]

\end{proposition}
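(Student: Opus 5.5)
The plan is to verify the three items by direct matrix algebra, using only the jump of $P_\infty$ from RHP~4.3 and the branch structure of $\Phi_\alpha$ from Proposition~\ref{prop:asympB}. Write $G_\alpha(z)$ for the right multiplier in \eqref{e:defWinf} inside $\mathcal{C}_\alpha^+$, and $V^+_\alpha$ for the jump of $P_\infty$ on $\mathcal{C}_\alpha^+$.

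\textbf{No jump across $\mathcal{C}_\alpha^\pm$.} A direct multiplication gives $V^+_\alpha G_\alpha = I$. Indeed, the $(1,1)$ entry is $1+CD-CD$, the off-diagonal entries cancel termwise, and the $(2,2)$ entry is $-CD+1+CD$. Hence $G_\alpha = (V^+_\alpha)^{-1}$, using $\det V^+_\alpha = 1$. I take $\mathcal{C}_\alpha^+$ oriented so that its interior lies on the $+$ side, which is the orientation drawn in Figure~\ref{fig:solgas}. Then
\[
W_{\infty,+}=P_{\infty,+}G_\alpha=P_{\infty,-}V^+_\alpha (V^+_\alpha)^{-1}=P_{\infty,-}=W_{\infty,-}.
\]
The same computation applies to the multiplier inside $\mathcal{C}_\alpha^-$, which is likewise the inverse of the corresponding jump of $P_\infty$. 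At $\kappa_\alpha$, both jumps tend to $I$ by \eqref{e:vanishing}. Together with the boundedness of $P_\infty$ from Proposition~\ref{bounded}, this shows the isolated singularity at $\kappa_\alpha$ is removable. So $W_\infty$ is analytic off $\L_\alpha^\pm$.

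\textbf{Jump on $\L_\alpha^+$.} Inside $\mathcal{C}_\alpha^+$ the factor $P_\infty$ is analytic, $C$, $D$ and $\theta$ are analytic, and $\Phi_\alpha$ has branch cut $\L_\alpha^+$ with $\Phi_{\alpha,+}=-\Phi_{\alpha,-}$. Hence
\[
W_{\infty,-}^{-1}W_{\infty,+}=G_\alpha(\Phi_{\alpha,-})^{-1}G_\alpha(\Phi_{\alpha,+})=V_\alpha^+(\Phi_{\alpha,-})\,G_\alpha(\Phi_{\alpha,+}).
\]
Substituting $\Phi_{\alpha,-}=-\Phi_{\alpha,+}$ and multiplying out gives the following entries.
\begin{itemize}
\item The diagonal entries are $1+CD+CD$ and $1+CD+CD$, both equal to $1+2CD$.
\item The $(1,2)$ entry is $-2D\Phi_{\alpha,+}^{-1}\e^{-2\theta}$.
\item The $(2,1)$ entry is $-2C(1+CD)\Phi_{\alpha,+}\e^{2\theta}$.
\end{itemize}
This is exactly \eqref{e:jumpWinf}, with $\Phi_\alpha$ read as its $+$ boundary value. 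The jumps on $\L_\alpha^-$ follow from the same computation with the starred multipliers, or directly from Schwarz symmetry.

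\textbf{Identification with RHP~\ref{RHP:Mtilde} and endpoint behaviour.} Here I identify $\L_\alpha^+$ with $\Sigma_\alpha^+$; both join $\kappa_\alpha$ to $z_\alpha$, and the segments can be deformed using the analyticity of $C$ and $D$. I then substitute the proposed $r_1$, $r_2$ into \eqref{e:V0}. The key identity is
\[
r_1r_2=\frac{CD}{1+CD},
\]
so that
\[
1+r_1r_2=\frac{1+2CD}{1+CD},\qquad 1-r_1r_2=\frac{1}{1+CD}.
\]
Both jump matrices have determinant $1$. Their off-diagonal entries carry the same exponentials $\e^{\mp2\theta}$ and the same powers $\Phi_\alpha^{\mp1}$.

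The main obstacle is matching the entries exactly. A naive entrywise comparison produces $(1+2CD)^{\pm1}$ on the diagonal. The correct matching depends on the orientation of $\L_\alpha^+$ relative to the upward orientation of $\Sigma$, on which boundary value of $\Phi_\alpha$ is used, and possibly on inverting the jump when the orientation is reversed. To resolve this I would first fix $\L_\alpha^+$ oriented from $\kappa_\alpha$ to $z_\alpha$, which is upward as in $\Sigma$. I would then recompute the jump in that orientation. If a discrepancy remains, I would check whether it is absorbed by relabelling $r_1\leftrightarrow r_2$ or by a sign in $\Phi_{\alpha,\pm}$.

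For the endpoint claims, \eqref{e:vanishing} gives $C,D=\bigo{z-\kappa_\alpha}$. Since $\Phi_\alpha^{\pm1}=\bigo{(z-\kappa_\alpha)^{\pm1/2}}$ and $|1+CD|>\delta$, we get $r_1=\bigo{(z-\kappa_\alpha)^{3/2}}$ and $r_2=\bigo{(z-\kappa_\alpha)^{1/2}}$. Both therefore extend continuously to $\kappa_\alpha$ with value $0$.

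Finally, $W_\infty=P_\infty$ outside a compact set, so $u(x,t)=2\mathrm{i}\lim_{z\to\infty} z\,W_{\infty,12}(z;x,t)$ is the same reconstruction as for $P_\infty$.
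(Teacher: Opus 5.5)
Your verification of items 1 and 2 is correct, and it is the direct computation the statement calls for; the paper gives no separate argument.

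\begin{itemize}
\item On each $\mathcal{C}_\alpha^\pm$ the multiplier in \eqref{e:defWinf} is the inverse of the jump of $P_\infty$. Both loops are oriented counterclockwise with the interior on the $+$ side, which is consistent with \eqref{e:defP}. So the jumps cancel.
\item Your product $G_\alpha(\Phi_{\alpha,-})^{-1}G_\alpha(\Phi_{\alpha,+})$, with $\Phi_{\alpha,-}=-\Phi_{\alpha,+}$, gives \eqref{e:jumpWinf}. Read with $\Phi_{\alpha,+}$, this formula is in fact the same for either orientation of $\L_\alpha^+$.
\item Your endpoint orders $r_1=\mathcal{O}((z-\kappa_\alpha)^{3/2})$ and $r_2=\mathcal{O}((z-\kappa_\alpha)^{1/2})$ are right, and so is the reconstruction formula.
\end{itemize}
A minor point: $\kappa_\alpha$ is the common endpoint of $\L_\alpha^\pm$, not an isolated singularity, so no removability argument is needed there.

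The gap is item 3, and none of the remedies you list can close it. The diagonal entries of the $\Sigma\cap\C^+$ jump in \eqref{e:V0} are $\frac{1-r_1r_2}{1+r_1r_2}$, which depend only on the product $r_1r_2$. With the stated formulas, $r_1r_2=\frac{CD}{1+CD}$. This holds whichever boundary value of $\Phi_\alpha$ you use, and also after swapping $r_1\leftrightarrow r_2$. So that diagonal is $(1+2CD)^{-1}$. On the other hand, \eqref{e:jumpWinf} has diagonal $1+2CD$. So does its inverse, which is the jump for the opposite orientation, because the matrix is unimodular with equal diagonal entries. Hence, with $r_1,r_2$ exactly as written, the claimed coincidence fails wherever $CD\neq 0$. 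The statement has a relative sign error, and no choice of orientation or branch can rescue it.

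The correct version comes from solving for $r_1,r_2$ entrywise. Matching the diagonal forces $r_1r_2=-\frac{CD}{1+CD}$, that is, $1+r_1r_2=(1+CD)^{-1}$ and $1-r_1r_2=\frac{1+2CD}{1+CD}$. Matching the off-diagonal entries then forces
\[
r_1=\mathrm{i}\,C\,\Phi_{\alpha,+},\qquad r_2=\frac{\mathrm{i}\,D\,\Phi_{\alpha,+}^{-1}}{1+CD},
\]
with $\L_\alpha^+$ oriented from $\kappa_\alpha$ to $z_\alpha$, like $\Sigma$. Equivalently, take the stated formulas with $\Phi_\alpha$ read as $\Phi_{\alpha,-}$ and the sign of $r_2$ reversed. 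With these, all four entries agree, and your endpoint argument goes through unchanged.

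The corrected choice is also the one compatible with Assumption~\ref{assume}. It gives $1+r_1r_2=(1+CD)^{-1}\neq 0$. The stated choice gives $1+r_1r_2=\frac{1+2CD}{1+CD}$, which vanishes where $CD=-\tfrac12$. That case is not excluded by the assumption, and it would make \eqref{e:V0} singular. So rather than leaving item 3 as ``check whether a discrepancy remains,'' you should amend it as above and prove the amended identity by this entrywise check.
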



\appendix
\section*{Appendix}
\setcounter{section}1
\setcounter{subsection}0
\setcounter{equation}0
\setcounter{figure}0
\def\thesection{\Alph{section}}
\def\theequation{\Alph{section}.\arabic{equation}}
\def\thetheorem{\Alph{section}.\arabic{theorem}}
\def\thefigure{\Alph{section}.\arabic{figure}}
\addcontentsline{toc}{section}{Appendix}

\subsection{Jump consistency at the intersecting points}
\label{a:jumpconsistensy}
We first proof the jump consistency for RHP~\ref{RHP:M}.
The jump on $\Sigma_2^+$ can be written in the form
\begin{subequations}
	\begin{equation}
		\restr{V(z)}{z\in \Sigma_2^+} = 
		\e^{-\i(x-x_0^r)z \sigma_3}
		\begin{bmatrix}
			\i r(z_+) \e^{\i \varphi_0^r} & 
			\i \e^{\i \varphi_0^r}  \\[4pt]
			\i [1+ r(z) r^*(z)] \e^{-\i \varphi_0^r} &
			-\i r(z_-) \e^{\i \varphi_0^r}
		\end{bmatrix}
		\e^{\i(x-x_0^r)z \sigma_3}
	\end{equation}
	which admits the factorization
	\begin{equation}\label{V+.two.factor}
		\restr{V(z)}{\mathrlap{z\in \Sigma_2^+}} = 
		\e^{-\i(x-x_0^r)z \sigma_3}
		\begin{bmatrix}
			1 & 0 \\ -r(z_-) & 1 
		\end{bmatrix} 
		\begin{bmatrix}
			0 & \i \e ^{\i\varphi_0^r} \\ 
			\i \e ^{-\i\varphi_0^r} & 0
		\end{bmatrix}
		\begin{bmatrix}
			1 & 0 \\ 
			r(z_+) & 1 
		\end{bmatrix} 
		\e^{\i(x-x_0^r)z \sigma_3}  .
	\end{equation}
\end{subequations}
By symmetry on $\Sigma_2^-$ 
\begin{subequations}
	\begin{equation}
		\restr{V(z)}{z\in \Sigma_2^-} =
		\e^{-\i (x- x_0^r)z \sigma_3}
		\begin{bmatrix}
			-\i r^*(z_-) \e^{-\i\varphi_0^r} & 
			\i [1+ r(z) r^*(z)] \e^{\i\varphi_0^r}   \\[4pt]
			\i \e^{-\i\varphi_0^r} &
			\i r^*(z_+) \e^{-\i\varphi_0^r}
		\end{bmatrix}
		\e^{\i (x- x_0^r)z \sigma_3}
	\end{equation}
	which admits the factorization
	\begin{equation}\label{V-.two.factor}
		\restr{V(z)}{\mathrlap{z\in \Sigma_2^-}} = 
		\e^{-\i(x-x_0^r)z \sigma_3}
		\begin{bmatrix}
			1 & r^*(z_-) \\ 
			0 & 1 
		\end{bmatrix} 
		\begin{bmatrix}
			0 & \i \e ^{\i\varphi_0^r} \\ 
			\i \e ^{-\i\varphi_0^r} & 0
		\end{bmatrix}
		\begin{bmatrix}
			1 & -r^*(z_+) \\ 
			0 & 1 
		\end{bmatrix} 
		\e^{\i(x-x_0^r)z \sigma_3} .
	\end{equation}
\end{subequations}

Let $ \xi_1$  and $\xi_2$ be the points on the real axis where $\Sigma_1$ and $\Sigma_2$ intersect the real axis, respectively.  Then for $z \in \R \setminus \{ \xi_1, \xi_2 \}$ the Riemann-Hilbert problem satisfies the jump condition
\begin{subequations}
	\begin{equation}
		\begin{gathered}
			M_{o+}(z) = M_{o-}(z) V(z), \\   
			V(z) = \e^{-\i(x-x_0^r)z\sigma_3} \begin{bmatrix} 1 + r(z) r^*(z) & r^*(z) \\ r(z) & 1 \end{bmatrix} \e^{-\i(x-x_0^r)z\sigma_3},  
		\end{gathered} 
		\qquad z \in \R \setminus \{ \xi_1, \xi_2 \}
	\end{equation}
	and $V(z)$ admits the triangular factorizations
	\begin{gather}
		\label{V.two.factor}
		V(z) = 
		\e^{-\i(x-x_0^r)z\sigma_3}
		\begin{bmatrix} 1 & r^*(z) \\ 0 & 1 \end{bmatrix} 
		\begin{bmatrix} 1 & 0 \\ r(z) & 1 \end{bmatrix} 
		\e^{\i(x-x_0^r)z\sigma_3}. 
	\end{gather}
\end{subequations}

To check the self-consistency of the jumps at the self-intersection points write 
\begin{equation}
	\begin{aligned}
		&V_1 := \lim_{\substack{z \to \xi_2 \\ z \in (-\infty,\, \xi_2)}} V(z)  \qquad 
		&& V_2 := \lim_{\substack{z \to \xi_2 \\ z \in \Sigma_2^+}} V(z) \\
		&V_3 := \lim_{\substack{z \to \xi_2 \\ z \in (\xi_2,\, \infty)}} V(z)  \qquad 
		&& V_4 := \lim_{\substack{z \to \xi_2 \\ z \in \Sigma_2^-}} V(z)
	\end{aligned}
\end{equation}
self-consistency is consistent with the condition that $ V_1 V_2^{-1} V_3^{-1} V_4 = I$. 
Using the factorizations \eqref{V+.two.factor}, \eqref{V-.two.factor}, \eqref{V.two.factor} we can verify that 
\begin{multline}
	V_1 V_2^{-1} V_3^{-1} V_4 = 
	\e^{-\i(x-x_0^r)z\sigma_3}
	\begin{bmatrix} 1 & r^*(\xi_{2+}) \\ 0 & 1 \end{bmatrix} 
	\begin{bmatrix} 1 & 0 \\ r(\xi_{2+}) & 1 \end{bmatrix} 
	\begin{bmatrix}
		1 & 0 \\ 
		-r(\xi_{2+}) & 1 
	\end{bmatrix} 
	\begin{bmatrix}
		0 & -\i \e ^{\i\varphi_0^r} \\ 
		-\i \e ^{-\i\varphi_0^r} & 0
	\end{bmatrix} \times \\
	\begin{bmatrix}
		1 & 0 \\ r(\xi_{2-}) & 1 
	\end{bmatrix} 
	\begin{bmatrix} 1 & 0 \\ -r(\xi_{2-}) & 1 \end{bmatrix} 
	\begin{bmatrix} 1 & -r^*(\xi_{2-}) \\ 0 & 1 \end{bmatrix} 
	\begin{bmatrix}
		1 & r^*(\xi_{2-}) \\ 
		0 & 1 
	\end{bmatrix} 
	\times \\
	\begin{bmatrix}
		0 & \i \e ^{\i\varphi_0^r} \\ 
		\i \e ^{-\i\varphi_0^r} & 0
	\end{bmatrix}
	\begin{bmatrix}
		1 & -r^*(\xi_{2+}) \\ 
		0 & 1 
	\end{bmatrix}
	\e^{\i(x-x_0^r)z\sigma_3}   = I.
\end{multline}
The computation at $\xi_1$ is similar.  

We now verify the same consistency condition for RHP~\ref{RHP:Mtilde}.
Denote by $V_j^{(M)}$, $j=1,\dots,4$, the corresponding jump matrices of $M$, ordered in the same way as the jump matrices $V_j$ for $M_o$.
Let $D_{\rm NW},D_{\rm NE},D_{\rm SE},D_{\rm SW}$ be the boundary values of this
diagonal multiplier in the four sectors around $\xi_2$, labelled northwest,
northeast, southeast and southwest. Then the four jump matrices for $M$ are
related to those for $M_o$ by
\[
V_1^{(M)}
=
D_{\rm SW}^{-1}V_1 D_{\rm NW},
\qquad
V_2^{(M)}
=
D_{\rm NE}^{-1}V_2 D_{\rm NW},
\]
\[
V_3^{(M)}
=
D_{\rm SE}^{-1}V_3 D_{\rm NE},
\qquad
V_4^{(M)}
=
D_{\rm SE}^{-1}V_4 D_{\rm SW}.
\]
Therefore
\[
	V_1^{(M)}(V_2^{(M)})^{-1}(V_3^{(M)})^{-1}V_4^{(M)}=D_{\rm SW}^{-1} 	V_1  V_2^{-1} V_3^{-1} V_4 D_{\rm SW} =I.
\]
Thus the jumps of the transformed matrix $M$ are also consistent at $\xi_2$.
The verification at $\xi_1$ is identical.

\subsection{Proof of Theorem~\ref{thm:inverse}}
\label{a:proof}
\begin{proof}
The proof follows the original ideal of \cite{Zhou}, and is an analogue of \cite{GJZZ}. 
	Let
	\[
	\widetilde\Sigma:=\mathbb R\cup\Sigma_1\cup\Sigma_2 .
	\]
	We first remove the poles in RHP~\ref{RHP:Mtilde}. For each pole $z_j$, $j=1,\dots, N$, let $\mathfrak{C}_j(x,t)$ denote the residue matrices at $z_j$, that is the rightmost matrix factor in \eqref{e:residue}. 
	Then $\mathfrak{C}_j^\sharp := \begin{bsmallmatrix} 0 &-\i \\ \i & 0 \end{bsmallmatrix} \overline{\mathfrak{C}_j} \begin{bsmallmatrix} 0 &-\i \\ \i & 0 \end{bsmallmatrix}$ denotes te corresponding residue matrix at each $\bar z_j$. 
	Choose pairwise disjoint disks
	\[
	\mathcal{D}_j^+= \mathcal{D}(z_j,\varepsilon),\qquad 
	\mathcal{D}_j^-= \mathcal{D}(\bar z_j,\varepsilon), \qquad j = 1, \dots, N
	\]
	such that
	\[
	\overline{\mathcal{D}_j^\pm}\cap\widetilde\Sigma=\varnothing .
	\]
	Orient all circles \(\partial \mathcal{D}_j^\pm\) counterclockwise and define
	\[
	\widehat M(z;x,t)=
	\begin{cases}
		M(z;x,t)\left(I-\dfrac{\mathfrak{C}_j(x,t)}{z-z_j}\right),
		& z\in \mathcal{D}_j^+,\\[1.2em]
		M(z;x,t) \left(I-\dfrac{\mathfrak{C}_j^\sharp  }{z-\bar z_j}\right),
		& z\in \mathcal{D}_j^-,\\[1.2em]
		M(z;x,t),
		& z\notin \displaystyle\bigcup_{j=1}^N( \mathcal{D}_j^+\cup \mathcal{D}_j^-).
	\end{cases}
	\]
	Then \(\widehat M\) satisfies a holomorphic Riemann--Hilbert problem on
	\[
	\widehat\Sigma
	:=
	\widetilde\Sigma\cup
	\bigcup_{j=1}^N(\partial \mathcal{D}_j^+\cup\partial \mathcal{D}_j^-),
	\]
	with jump matrix 
	\[
	\widehat V(z;x,t)=
	\begin{cases}
		V(z;x,t), & z\in\Gamma,\\[0.8em]
		I-\dfrac{\mathfrak{C}_j(x,t)}{z-z_j}, & z\in\partial \mathcal{D}_j^+,\\[1.2em]
		I-\dfrac{\mathfrak{C}_j^\sharp(x,t)}{z-\bar z_j}, & z\in\partial \mathcal{D}_j^- .
	\end{cases}
	\]
	Therefore RHP~\ref{RHP:Mtilde} is equivalent to the above holomorphic Riemann--Hilbert problem.
	
	Set
	\[
	\widehat w:=\widehat V-I .
	\]
	By \eqref{e:V0}, \eqref{e:residue}, and the assumptions \ref{ass1} and \ref{ass2}, we have
	\[
	\widehat w\in L^2(\widehat\Gamma)\cap L^\infty(\widehat\Gamma).
	\]
	The holomorphic Riemann--Hilbert problem is therefore equivalent to the Beals--Coifman equation
	\[
	(I-C_{\widehat w})\mu=I,
	\qquad
	C_{\widehat w}f:=C_-\big(f\widehat w\big),
	\]
	where $C_-$ denotes the Cauchy projection operator along the contour $\widehat \Gamma$,
	and
	\[
	\widehat M(z;x,t)
	=
	I+\frac{1}{2\pi i}
	\int_{\widehat\Gamma}
	\frac{\mu(s;x,t)\widehat w(s;x,t)}{s-z}\,ds .
	\]
	The same Beals--Coifman and vanishing lemma argument used in \cite{GJZZ} proves that
	\(I-C_{\widehat w}\) is invertible on \(L^2(\widehat\Gamma)\). Hence the holomorphic problem for
	\(\widehat M\) is uniquely solvable. By the equivalence above, RHP~\ref{RHP:Mtilde} is uniquely
	solvable. Expanding the solution for large $z$, we define 
	\[
	u(x,t)=2i\lim_{z\to\infty}zM_{12}(z;x,t).
	\]
	
	Finally, the same differentiability argument for the Beals--Coifman equation as in \cite{GJZZ}
	gives
	\[
	u(x,t)\in C^2(\mathbb R)\times C(\mathbb R^+).
	\]
	This completes the proof.
\end{proof}

\subsection{Overlapping spectral bands without real intersection}\label{s:disjoint}
\begin{figure}[t]
	\centering
	\includegraphics[width=0.2
	\linewidth]{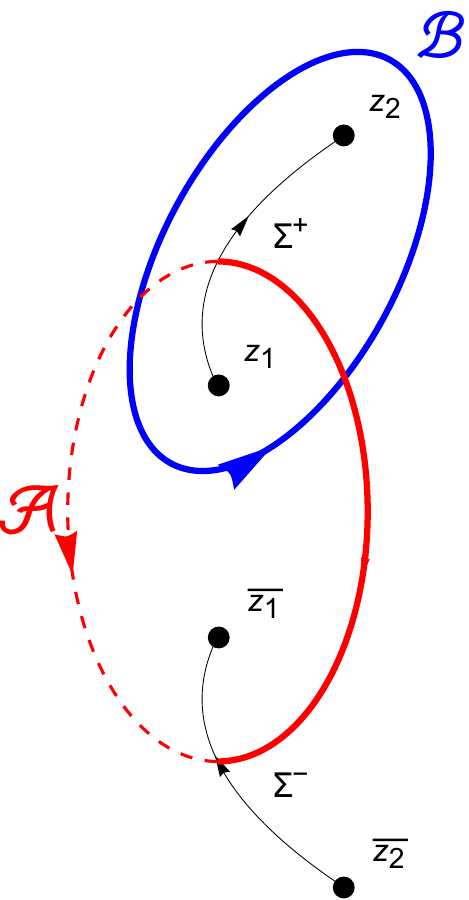}
	\caption{The homology basis for the Riemann surface $\mathcal{X}$ associated with $R^2=(z-z_1)(z-\overline{z_1})(z-z_2)(z-\overline{z_2})$, when the cuts $\Sigma = \Sigma^+ \cup \Sigma^-$ do not intersect the real axis. Solid/dashed lines indicate paths on the first/second sheets of the two sheeted model of $\mathcal{X}$ glued along $\Sigma$. }
	\label{surf1}
\end{figure}

Here we adapt the discussion in Section \ref{sec_elliptic} and Section \ref{sec_perturbative}, to considers steplike elliptic initial data \eqref{initial_data} with completely overlapping spectrum, i.e., $\Sigma^\ell = \Sigma^r  = \Sigma$, but now in the case where $\Sigma \cap \R = \emptyset$. See Figure~\ref{fig:cuts1}. That is, we choose branch points $z_1$ and $z_2$ defining $\Sigma$ for which \eqref{real_spectrum} is not satisfied. 

The first step is to introduce a new Riemann--Hilbert problem encoding the ellipitic waves with spectral bands $\Sigma$ not intersecting the real axis. Compare this to RHP~\ref{RHP:O}.
We lightly abuse notation and use the same symbol $O(z;x,t)$ for its solution.
\begin{RHP}\label{O:RHP}
	Find a $2\times2$ matrix-valued function $O(z) = O(z;x,t)$ which satisfies the following conditions:
	\begin{enumerate}
		\item $O(z;x,t)$ is analytic in $\mathbb{C}\setminus\left\{\Sigma_0\cup\Sigma^-\cup \Sigma^+\right\}$,  where $\Sigma_0=[\overline{z_1},z_1]$  (see Figure~\ref{surf1}). 
		\item $O(z;x,t)$ satisfies the jump conditions $O_+(z;x,t)=O_-(z;x,t)V^{(O)}(z;x,t)$, where
		\be\label{Ojump}
		V^{(O)}(z;x,t)=
		\begin{cases}
			\begin{pmatrix}
				0 & \mathrm{i}\,\e^{2\mathrm{i}\,(xE+tN+\varphi_0)}\\
				\mathrm{i}\,\e^{-2\mathrm{i}\,(xE+tN+\varphi_0)} & 0
			\end{pmatrix}\,, & z\in \Sigma^+\cup \Sigma^-\,,\\
			\e^{\mathrm{i}\Omega\sigma_3}
			\,, & z\in\Sigma_0\,.
		\end{cases}
		\ee
		\item $O(z;x,t)$ satisfies the same normalization at infinity, quartic root singularities at $z\in\{z_1, \overline{z_1},z_2,\overline{z_2}\}$, and Schwarz symmetry as defined in RHP \ref{RHP:O}.
	\end{enumerate}
\end{RHP}

The solution $O(z;x,t)$ is constructed similarly to Proposition~\ref{prop:Osol} with a few modifications to account for the addition of the gap jump along $\Sigma_0$. One now models the Riemann surface $\mathcal{X}$ (cf. \eqref{RS_X} as a two sheeted cover of $\C$ cut and glued along $\Sigma_+ \cup \Sigma_-$. We introduce a homology basis on $\mathcal{X}$ as in Figure~\ref{surf1}. The quasi-momentum and quasi-energy differentials $\d p$ and $\d q$, and the associated phase $\Omega(x,t)$ are still given by \eqref{e:quasidiffs} and \eqref{Omega}; the normalized holomorphic differential \eqref{e:h.diff} is unchanged. On $\C$ we now take the Abel map to be given by \eqref{Abel} where the path of integration does not intersection $\Sigma^+ \cup \Sigma^- \cup \Sigma_0$. One easily verifies that 
\begin{align}
	&\begin{cases}
	    A(z_+) + A(z_-) = 0, &  z \in \Sigma^+,  \\
	A(z_+) + A(z_-) = -1, & z \in \Sigma^-,
	\end{cases}
	&&A(z_+) - A(z_-) = \tau,  \quad z \in \Sigma_0.    
\end{align}
Define 
\begin{equation}
	\tilde\gamma(z) = \left( \frac{ z - z_2}{z-z_1}  \right)^{1/4} \left( \frac{ z - \overline{z_1}}{z-\overline{z_2}}  \right)^{1/4}
\end{equation}
to be analytic in $\C \setminus (\Sigma^+ \cup \Sigma^-)$ and normalized such such $\tilde \gamma(z) \to 1$ as $z \to \infty$ (compare to \eqref{e:gamma}). 
Then one verifies that the solution of RHP~\ref{O:RHP} is given by
\begin{align}\label{defO.no.cross}
	O(z;x,t) = 
	\e^{\i((x-x_0)E + t N+\varphi_0)\sigma_3}
	\begin{bmatrix}
		\frac{\tilde\gamma+\tilde\gamma^{-1}}{2} \tilde H_{11}(z) & \frac{\tilde\gamma-\tilde\gamma^{-1}}{2} \tilde H_{12}(z) \\
		\frac{\tilde\gamma-\tilde\gamma^{-1}}{2} \tilde H_{21}(z) & \frac{\tilde\gamma+\tilde\gamma^{-1}}{2} \tilde H_{22}(z)
	\end{bmatrix}
	\e^{-\i((x-x_0)E + t N+\varphi_0)\sigma_3}\, ,
\end{align}
where
\begin{gather}
	\tilde H_{11}(z) = \frac{\theta_3(0)\theta_3(A(z)-A(\infty)-\frac{\Omega}{2\pi})}{\theta_3(\frac{\Omega}{2\pi})\theta_3(A(z)-A(\infty))}, \quad
	\tilde H_{12}(z) = \frac{\theta_3(0) \theta_3(A(z)+A(\infty)+\frac{\Omega}{2\pi})}{\theta_3(\frac{\Omega}{2\pi}) \theta_3(A(z)+A(\infty))}
	\\
	\tilde H_{21}(z) = \frac{\theta_3(0)\theta_3(A(z)+A(\infty)-\frac{\Omega}{2\pi})}{\theta_3(\frac{\Omega}{2\pi})\theta_3(A(z)+A(\infty))}
	, \quad
	\tilde H_{22}(z) = \frac{\theta_3(0)\theta_3(A(z)-A(\infty)+\frac{\Omega}{2\pi})}{\theta_3(\frac{\Omega}{2\pi})\theta_3(A(z)-A(\infty))}.
\end{gather}
which is identical to \eqref{e:H.theta} up to a halt-period shift in the characteristic of the theta functions: $\theta_4(z;\tau) = \theta_3(z +\tfrac{1}{2}; \tau)$. 
The simultaneous solution of the ZS Lax-pair $W_e(x,t;z)$ corresponding to the elliptic wave $u_e(x,t; x_0 ,\varphi_0)$ is still defined as in \eqref{defWbg} up to our modification of the function $O$. 
One still has 
	\begin{align}\label{jumpW0}
		\begin{split}
			W_e(z_+)=W_e(z_-)
			\begin{cases}
				\i \e^{2\i (x_0 E+\varphi_0)\sigma_3}\sigma_1, & z \in \Sigma, \vspace{8pt}\\
				I, & z \in\Sigma_0,
			\end{cases}
		\end{split}
	\end{align}

The construction of the Jost solution $W^{s}(x,t;z)$ for the ZS spectral problem,  their asymptotic behavior as $x\to\infty$, and the definition of the modified matrix functions $m^s(x,t;z)$  are strictly parallel to those given in \eqref{e:Wpmasy} and \eqref{e:defm}, respectively. 

Substituting the definition of $W^s(x,t;z)$ into the ZS equation \eqref{e:zs}, we find that $m^s(x;z)$ satisfies the exact same ODE as in Lemma \ref{e:ODEm}. 
Consequently, it is governed by the identical Volterra integral equation derived in \eqref{e:Intm} of the Section~\ref{sec_perturbative}. 
The function $W^s(x,t;z)$ has the folliwowing properties

\begin{figure}[t]
	\centering
	\begin{overpic}[width=0.45\linewidth]{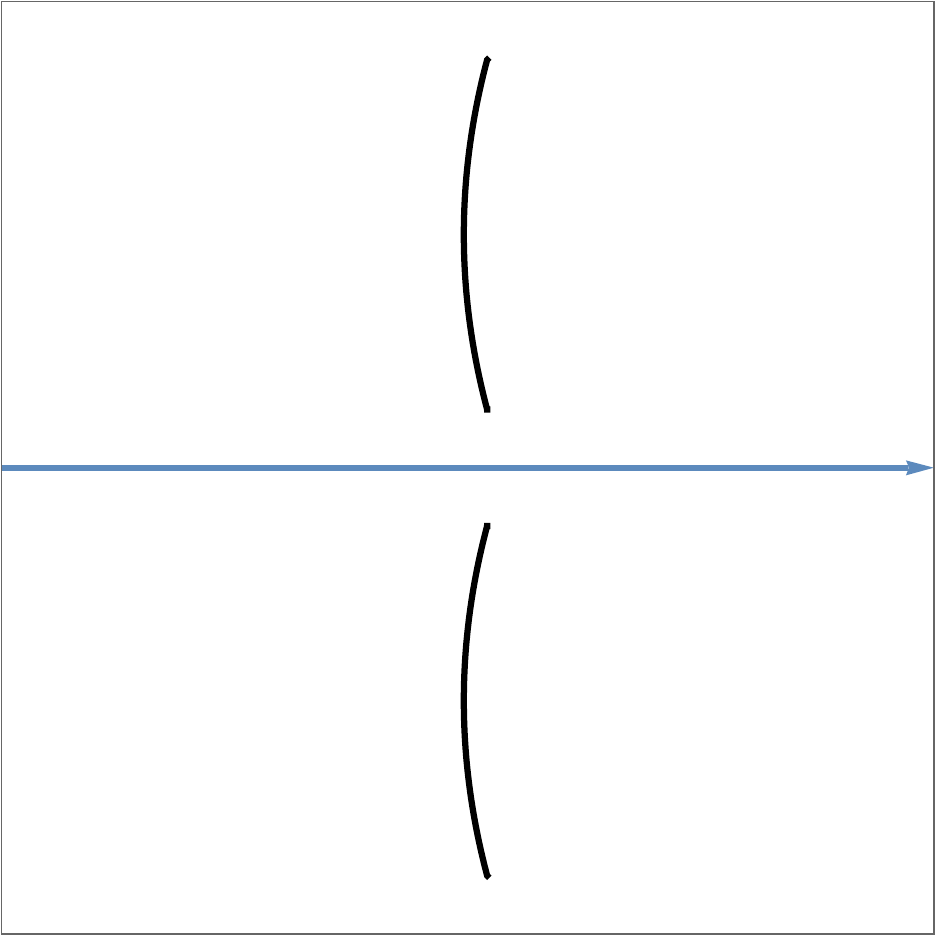}
		\put(60,93){\makebox(0,0)[r]{$z_2$}}
		\put(59,43){\makebox(0,0)[r]{$\overline z_1$}}
		
		\put(54,56){\makebox(0,0)[l]{$z_1$}}
		\put(54,7){\makebox(0,0)[l]{$\overline z_2$}}
		
		\put(51.2,93){\color{red}$\bullet$} 
		\put(51.2,43){\color{red}$\bullet$}  
		\put(51.2,54){\color{red}$\bullet$}  
		\put(51.0,5){\color{red}$\bullet$}  
		\put(94,53){\color{black}$\mathbb{R}$}
		
		\put(40,73){$\Sigma^+$}
		\put(40,23){$\Sigma^-$}
	\end{overpic}
	\caption{The spectral bands $\Sigma=\Sigma^+\cup\Sigma^-$.}
	\label{fig:cuts1}
\end{figure}


\begin{proposition}\label{Pm}
	Suppose $u_0 - u_e^\ell \in L^1(\R^-)$ and $u_0 - u_e^r \in L^2(\R^+)$. Then $W^s(x;z)$ have the following properties:
	\begin{enumerate}
		\item $W_1^{\ell}(x;z)$ and $W_2^r (x;z)$  are analytic for $z\in\mathbb{C}^+\setminus\Sigma^{+}$;  $W_1^r(x;z)$ and $W_2^{\ell}(x;z)$ are analytic for $z\in\mathbb{C}^-\setminus\Sigma^{-}$.
		
		\item For $s\in\{\ell,r\}$ and $z \in \Sigma^{\pm}$, the boundary values $W^s(x;z_\pm)$ satisfy
	\begin{equation}\label{jumpWs}
		W^s(x;z_+)= W^s(x;z_-)\, \e^{\mathrm{i} \varphi_0^s\sigma_3} (i \sigma_1),  \qquad z \in \Sigma^{\pm}.
	\end{equation}
	
	\item $W^s(x;z)$ has at most fourth-root singularities at $z=z_j$$(j=1,2)$.
	\end{enumerate}
\end{proposition}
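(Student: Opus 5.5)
We follow the perturbative scheme of Section~\ref{sec_perturbative}, now with the non-crossing cut configuration of Figure~\ref{fig:cuts1}. Write $W^s(x;z)=O^s(z;x,0)\,m^s(x;z)\,\e^{-\i(x-x_0^s)(p(z)-E)\sigma_3}$, where $O^s$ is the solution \eqref{defO.no.cross} of RHP~\ref{O:RHP} with phases $(x_0^s,\varphi_0^s)$, and $m^s$ solves the Volterra equation \eqref{e:Intm}. All three properties are then read off from the corresponding properties of the three factors.

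\textbf{Analyticity.} Take the first column of $m^\ell$. The kernel of \eqref{e:Intm} acting on it has entries bounded by $|O^\ell(y)^{-1}\Delta U^\ell(y)O^\ell(y)|$ times either $1$ or $\e^{2\i(x-y)(p(z)-E)}$ (one of the two sign choices). For $y<x$ and $z\in\C^+\setminus\Sigma^+$, the inequality $\Im(p(z)-E)>0$ of Lemma~\ref{dp_inequality} shows that these exponentials are bounded by $1$. Two facts have to be checked in the present setting:
\begin{itemize}
\item Lemma~\ref{dp_inequality} holds for every choice of $z_1\neq z_2$, hence also for this cut configuration.
\item $O^s(z;x,0)$ is bounded uniformly in $x$, being a ratio of theta functions with real phase $\Omega/(2\pi)$.
\end{itemize}
The Neumann series then converges uniformly on compact subsets of $\C^+\setminus\Sigma^+$, using $\Delta U^\ell\in L^1(\R^-)$. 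Each iterate is analytic in $z$, so the limit is analytic. The same argument applies to $W_2^r$ on $\R^+$, and to the lower half-plane columns via $\Im(p-E)<0$.

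One point needs separate care: the gap jump $\e^{\i\Omega\sigma_3}$ of $O$ across $\Sigma_0$ must not destroy analyticity there. Across $\Sigma_0$ the Abel integral $p$ has the jump $p_+-p_-=\pm\Omega_1$ (the $\mathbf b$-period). Therefore $\e^{-\i(x-x_0)(p-E)\sigma_3}$ jumps by exactly the factor that cancels the jump of $O$; this is the content of \eqref{jumpW0} with jump $I$ on $\Sigma_0$. The kernel $O^{-1}\Delta U O$ is conjugated by the same diagonal jump, so $m^s$ is continuous across $\Sigma_0$. Morera's theorem then removes $\Sigma_0$ from the singular set. We expect this cancellation to be the main obstacle, because it requires tracking the sheet conventions of $p$ versus $A$ across $\Sigma_0$ and showing that the diagonal phase factors match exactly. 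We would first verify it at the level of $W_e$, where it reduces to \eqref{jumpW0}, and then transfer it to the kernel.

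\textbf{Jump relation.} On $\Sigma^\pm$ we have $p_++p_-=0$ and $O_+=O_-V^{(O)}$ with $V^{(O)}$ off-diagonal. From these, $W_e^s(z_+)=W_e^s(z_-)\,\i\,\e^{\i\varphi_0^s\sigma_3}\sigma_1$ (up to the constant phase convention in \eqref{jumpW0}). Both boundary values $W^s(x;z_\pm)$ solve \eqref{e:zs} with the same potential. They also have the same asymptotics as $x\to\pm\infty$ once right-multiplied by the constant matrix $\e^{\i\varphi_0^s\sigma_3}(\i\sigma_1)$, since $m^s\to I$. Uniqueness of the Volterra solutions then gives \eqref{jumpWs}.

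\textbf{Endpoint behaviour.} Near $z_j$, the factor $O$ has $\tilde\gamma^{\pm1}=\mathcal O(|z-z_j|^{-1/4})$. The factor $p(z)-E$ is bounded there, so $\e^{\pm\i(x-x_0)(p-E)}$ and the Volterra kernel stay bounded; the product $O^{-1}\Delta U\,O$ picks up at most a $|z-z_j|^{-1/2}$ factor. Controlling this uniformly near the endpoint needs the renormalization used in \cite{GJZZ}: multiply by $\mathrm{diag}$-type powers of $\tilde\gamma$, or equivalently observe that the $\theta_3$-quotients remain bounded. The Neumann series is then bounded near $z_j$, and the outer factor $O$ contributes at most fourth-root singularities.
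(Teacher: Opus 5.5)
Your argument for analyticity off the cuts and for the band jump \eqref{jumpWs} is the standard Volterra argument. The paper itself offers nothing more: it only points back to \eqref{e:Intm} and to the trivial jump of $W_e$ on $\Sigma_0$ in \eqref{jumpW0}. However, one step there is wrong, and the endpoint part has a genuine gap.

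\textbf{The gap $\Sigma_0$.} Here $m^s$ is \emph{not} continuous.
\begin{itemize}
\item On $\Sigma_0$ you have $O^s_+=O^s_-\mathrm{e}^{\mathrm{i}\Omega\sigma_3}$, together with the compensating real jump of $p$ that makes $W_e$ continuous.
\item The $y$-dependent conjugation of the kernel and the jump of $\mathrm{e}^{-\mathrm{i}(x-y)(p-E)\sigma_3}$ then combine into an $x$-dependent conjugation of the whole equation.
\item Uniqueness therefore gives $m^s_+(x)=\mathrm{e}^{-\mathrm{i}\Omega(x)\sigma_3}m^s_-(x)\,\mathrm{e}^{\mathrm{i}\Omega(x)\sigma_3}$ with $\Omega(x)=\Omega_1(x-x_0^s)$, and the off-diagonal entries genuinely jump.
\end{itemize}
Inserting this relation into $W^s=O^s m^s\mathrm{e}^{-\mathrm{i}(x-x_0^s)(p-E)\sigma_3}$ is what yields $W^s_+=W^s_-$ on $\Sigma_0$. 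Morera must be applied to the columns of $W^s$, not to $m^s$. This is easily repaired.

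Separately, for $W_2^r$ you tacitly use $u_0-u_e^r\in L^1(\R^+)$. With the literal $L^2$ hypothesis the iteration does not close, because the diagonal entries of the column kernel carry no damping exponential. $L^1$ is evidently what is meant (cf.\ Proposition~\ref{propm}), but say so.

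\textbf{The genuine gap is property 3.} Since $O^{\pm1}=\mathcal{O}(|z-z_j|^{-1/4})$, the kernel $O^{-1}\Delta U O$ is of order $|z-z_j|^{-1/2}$. Your Neumann series therefore only gives $|m^s|\le\exp\bigl(C|z-z_j|^{-1/2}\|\Delta U^s\|_{L^1}\bigr)$.
\begin{itemize}
\item In fact $m^s$ is unbounded at $z_j$: already the first iterate is, in general, of exact order $|z-z_j|^{-1/2}$. So the claim that the Neumann series is bounded near $z_j$ does not hold.
\item Your proposed remedy does not help. Any diagonal rescaling by powers of $\tilde\gamma$ leaves the diagonal entries of the kernel unchanged. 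Yet the singular part of $O^{-1}\Delta U O$ is $\tilde\gamma^{\mp2}$ times a nilpotent rank-one matrix whose diagonal is nonzero in general.
\item Boundedness of the $\tilde H_{ij}$ is beside the point. The fourth-root bound for $W^s$ comes only from cancellation between $O(x)$ and $O(y)^{-1}$.
\end{itemize}

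To capture that cancellation, write the equivalent equation for $\Psi^\ell_1=W^\ell_1\,\mathrm{e}^{\mathrm{i}(x-x_0^\ell)(p-E)}$. Its inhomogeneous term is $O^\ell_1=\mathcal{O}(|z-z_j|^{-1/4})$, and its kernel is $W_e(x;z)W_e(y;z)^{-1}\mathrm{e}^{\mathrm{i}(x-y)(p-E)}\Delta U(y)$, kept unsplit.
\begin{itemize}
\item $W_e(x)W_e(y)^{-1}$ is the propagator of the background operator, so it is entire in $z$.
\item Floquet theory bounds this kernel by $C(1+|x-y|)\,|\Delta U(y)|$ uniformly near $z_j$.
\item Gronwall then gives $W^\ell_1=\mathcal{O}(|z-z_j|^{-1/4})$.
\end{itemize}
This works only under a first-moment condition $\int(1+|y|)\,|u_0-u_e^s|\,\mathrm{d}y<\infty$, which is not among the hypotheses. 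With $L^1$ alone, your argument yields no power-type bound at $z_j$.
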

The scattering matrix $S(z)$ relating the fundamental solutions $W^{\ell}(z)$ and $W^r(z)$ of \eqref{e:zs} is defined exactly as in \eqref{e:S}.

Using the same piecewise construction \eqref{WB}, we obtain $\hat{M}(z;x)$. This matrix satisfies a Riemann--Hilbert problem identical to RHP \ref{RHP:M} for $M_o(z;x)$, sharing the exact same normalization, symmetries, and singularities, except it is now analytic in $\mathbb{C}\setminus (\mathbb{R}\cup\Sigma^+\cup\Sigma^-)$, and the jump matrices from $\Sigma_1^\pm\cup\Sigma_2^\pm$ are applied to the consolidated contours $\Sigma^\pm$. 

For the subsequent analysis, the auxiliary function $h(z)$, the factorization functions $a_{1}(z)$ and $a_{2}(z)$, and the reflection coefficients $r_1(z)$,  $r_2(z)$ and $\rho(z)$ are defined exactly as in \eqref{e:defh}, \eqref{e:defa12}, \eqref{e:defr1}, \eqref{e:defr2} and \eqref{e:rho}, preserving all properties established in Propositions \ref{p:hep} and \ref{propa12}.

To set up a more general Riemann–Hilbert problem, we symmetrize the factorization by defining
\be
\tilde{M}(z;x)=\hat{M}(z;x)
\begin{cases}
	a_2(z)^{\sigma_3}, & z\in \Complex^+,\\
	a_2^*(z)^{-\sigma_3}, & z\in \Complex^-.
\end{cases}
\ee
Then, $\tilde{M}(z;x)$ satisfies the following Riemann--Hilbert problem:
\begin{RHP}
	Find a $2\times2$ matrix-valued function $\tilde{M}(z;x)$ which satisfies the following conditions:
	\begin{enumerate}
		\item $\tilde{M}(z;x)$ is analytic for $z \in \Complex\setminus (\Real\cup\Sigma^+\cup\Sigma^-)$.
		\item $\tilde{M}(z;x)=I+\mathcal{O}(z^{-1})$, as $z\to\infty$\,.
		\item $\tilde{M}(z;x)$ satisfies the jump condition $\tilde{M}(z_+;x)=\tilde{M}(z_-;x) \tilde{V}(z;x,t=0)$, where $\theta(z;x,t)$ is defined in \ref{e:theta} and 
		 $\tilde{V}(z;x, t=0)$ is given by 
		
		\be
		\tilde{V}(z;x,t=0)=\begin{cases}
			\begin{pmatrix}
				\vspace{6pt}
				\dfrac{1-r_1(z)r_2(z)}{1+r_1(z)r_2(z)} & \dfrac{2\mathrm{i} r_2(z)}{1+r_1(z)r_2(z)}\e^{-2\theta(z;x,0)} \vspace{6pt}
				\\\vspace{8pt}
				\dfrac{2\mathrm{i} r_1(z)}{1+r_1(z)r_2(z)}\e^{2\theta(z;x,0)} & 	\dfrac{1-r_1(z)r_2(z)}{1+r_1(z)r_2(z)} 
			\end{pmatrix}, & z\in\Sigma^+,\\
			\vspace{8pt}
			\begin{pmatrix}
				1+|\rho(z)|^2 & \rho^*(z)\e^{-2\theta(z;x,0)} \vspace{0.5ex}\\
				\rho(z)\e^{2\theta(z;x,0)} & 1
			\end{pmatrix}, & z\in \Real, 
			\vspace{6pt}
			\\
			\begin{pmatrix}
				\dfrac{1-r_1^*(z)r_2^*(z)}{1+r_1^*(z)r_2^*(z)} & 
				\dfrac{2\mathrm{i} r_1^*(z)}{1+r_1^*(z)r_2^*(z)}\e^{-2\theta(z;x,0)}\vspace{0.5ex}
				\\
				\dfrac{2\mathrm{i} r_2^*(z)}{1+r_1^*(z)r_2^*(z)}\e^{2\theta(z;x,0)} &
				\dfrac{1-r_1^*(z)r_2^*(z)}{1+r_1^*(z)r_2^*(z)} 
			\end{pmatrix}, & z\in \Sigma^-.
		\end{cases}
		\ee
	\end{enumerate}
\end{RHP}

\subsection{Proof of Proposition~\ref{prop:gas.error}} \label{app:solgas.error}
\begin{proof}

	Using the analyticity and normalization properties of $P(z)$ and $P_{\infty}(z)$, the error term $\E(z)$ introduced in \eqref{e:error} is analytic in $\C\setminus(\mathcal{C}_1\cup \mathcal{C}_2)$ and satisfies
	\[
	\E(z)=I+O(z^{-1}),\qquad z\to\infty .
	\]
	Its jump condition is
	\[
	\E_+(z)=\E_-(z)V_\E(z),
	\]
	where
	\[
	V_\E(z)
	=
	P_{\infty,-}(z)V_P(z)V_{P_\infty}(z)^{-1}P_{\infty,-}(z)^{-1}.
	\]
	
We first consider $z\in \mathcal{C}_\alpha^+$, $\alpha=1,2$. For brevity, in this proof we write
	\[
	B=B(Nw_\alpha(z)),\qquad \Phi_\alpha=\Phi_\alpha(z).
	\]
	A direct calculation gives
	\[
	V_PV_{P_\infty}^{-1}
	=
	  \begin{pmatrix}
		1+C D-C D\Phi_\alpha B^{-1}
		&
		D(B^{-1}-\Phi_\alpha^{-1})\e^{-2\theta}
		\\[2mm]
		C(1+C D)(B-\Phi_\alpha)\e^{2\theta}
		&
		1+C D-C D\Phi_\alpha^{-1}B
	  \end{pmatrix}.
	\]
	
By Proposition~\ref{prop:asympB} and Assumption~\ref{assume},
	\[
	C(1+C D)(B-\Phi_\alpha)=\mathcal{O}(N^{-1}).
	\]
Also,
	\[
	C D(\Phi_\alpha B^{-1}-1)=\mathcal{O}(N^{-1}),\qquad
	C D(\Phi_\alpha^{-1}B-1)=\mathcal{O}(N^{-1}).
	\]
It remains to estimate the $12$-entry. We write
	\[
	D(z)\left(B^{-1}-\Phi_\alpha^{-1}\right)
	=
	D(z)\Phi_\alpha^{-1}
	\left(\Phi_\alpha B^{-1}-1\right).
	\]
	On compact subsets away from $\kappa_\alpha$, Proposition~\ref{prop:asympB}(i) gives
	\[
	\Phi_\alpha B^{-1}-1=\mathcal{O}(N^{-1}),
	\]
	and hence this term is $\mathcal{O}(N^{-1})$.

	Near $\kappa_\alpha$, we use the local estimate from the proof of Proposition~\ref{prop:asympB}. It gives
	\[
	\Phi_\alpha B^{-1}-1
	=
	\mathcal{O}\left(\min\left\{1,\frac{1}{N|w_\alpha(z)|}\right\}\right).
	\]
	Since $D(\kappa_\alpha)=0$ and
	\[
	\Phi_\alpha=\mathcal{O}\left((z-\kappa_\alpha)^{1/2}\right),
	\]
	we have
	\[
	D(z)\Phi_\alpha^{-1}
	=
	\mathcal{O}\left((z-\kappa_\alpha)^{1/2}\right).
	\]
Therefore,
	\[
	D(z)\left(B^{-1}-\Phi_\alpha^{-1}\right)
	=
	\mathcal{O}\left(
	|z-\kappa_\alpha|^{1/2}
	\min\left\{1,\frac{1}{N|z-\kappa_\alpha|}\right\}
	\right)
	=
	\mathcal{O}(N^{-1/2}).
	\]
Hence, on $\mathcal{C}_\alpha^+$,
	\[
	V_P(z)V_{P_\infty}(z)^{-1}=I+\mathcal{O}(N^{-1/2}).
	\]
	The same estimate holds on $\mathcal{C}_\alpha^-$ by Schwarz symmetry. Since $P_\infty$ has bounded boundary values on $\mathcal{C}_1\cup \mathcal{C}_2$, we obtain
	\[
	V_\E(z)=I+\mathcal{O}(N^{-1/2})
	\]
	uniformly on $\mathcal{C}_1\cup \mathcal{C}_2$.

We can now rewrite the multiplicative jump condition $
	\E_+(z)-\E_-(z)=\E_-(z)(V_\E-I)$, $ z\in \mathcal{C}_1\cup\mathcal{C}_2.$
Using the Plemelj formulas and the normalization at infinity, the RHP for $\E$  is equivalent to the following Cauchy integral equation:
\begin{align}
\E(z)=I+\frac{1}{2\pi \i}\int_{\mathcal{C}_1\cup\mathcal{C}_2}\frac{\E_-(s)(V_\E-I)}{s-z}\d s,\quad z\in\C\setminus(\mathcal{C}_1\cup\mathcal{C}_2).
\end{align}
We then obtain an integral equation for the boundary value $\E_-(z)$
\begin{align}
	\E_-(z)=I+C_-(\E_-(V_{\E}-I))(z),
\end{align}
where $C_-$ is the standard Cauchy boundary operator on $\mathcal{C}_{\alpha}$. We define the operator $C_{V_{\E}-I}(f)=C_-(f(V_{\E}-I))$. Since $C_-$ is bounded operator  on $L^{2}(\mathcal{C}_{\alpha})$, we obtain
\begin{align*}
	\|C_{V_{\E}-I}\|_{L^2\to L^2}\leq \|C_-\|_{L^2}\|V_{\E}-I\|_{L^{\infty}}=\bigo{N^{-1}}.
\end{align*}
This implies immediately $\|C_{V_{\E}-I}\|_{L^2}<1$. By the Neumann series theorem, the operator $(I-C_{V_{\E}-I})$ is invertible in $L^2(\mathcal{C}_\alpha)$. We consider
\begin{align*}
	\E_--I=(I-C_{V_{\E}-I})^{-1}C_{V_{\E}-I}I,
\end{align*}
and correspondingly
\begin{align*}
	\|\E_--I\|_{L^2}\leq \|(I-C_{V_{\E}-I})^{-1}\|_{L^2\to L^2}\|C_{V_{\E}-I}I\|_{L^2}.
\end{align*}
Since $\|C_{V_{\E}-I}I\|_{L^2(\Sigma)} \leq C \|V_{\E} - I\|_{L^2(\Sigma)} = \bigo{N^{-1/2}}$, we deduce that $\|\E_- - I\|_{L^2(\Sigma)} = \bigo{N^{-1/2}}$.

For any $z\in\C\setminus(\mathcal{C}_1\cup\mathcal{C}_2)$ bounded away from the contour, we estimate $\E(z)$ using the Cauchy-Schwarz inequality, we conclude that $\E(z)=I+\bigo{N^{-1/2}}$ uniformly for $z$ in compact of $\C\setminus(\mathcal{C}_1\cup\mathcal{C}_2)$.
\end{proof}


\medskip
\let\em=\it
\makeatletter
\def\@biblabel#1{#1.}
\def\doibase{http://dx.doi.org/}
\def\reftitle#1{``#1''}
\def\booktitle#1{\textit{#1}}
\def\href#1#2{#2}
\makeatother
\small



\begin{thebibliography}{Oi}
	\advance\itemsep-4pt
	
\bibitem{AblowitzSegur81}
	M. J. Ablowitz and H. Segur,
	\booktitle{Solitons and the Inverse Scattering Transform}, SIAM, Philadelphia, 1981
	
\bibitem{BC}
	 R. Beals and R. R. Coifman,  
	  \reftitle{Scattering and inverse scattering for first order systems}, Comm. Pure Appl. Math. {\bf37}, 39--90 (1984)

\bibitem{BGO1}
	M. Bertola, T. Grava and G. Orsatti,   
	\reftitle{Soliton shielding of the focusing nonlinear Schr\"odinger equation},
Phys. Rev. Lett. {\bf130}(12), 127201 (2023)

\bibitem{BGO2}
M. Bertola, T. Grava and G. Orsatti,  
\reftitle{Integrable operators, dbar-problems, KP and NLS hierarchy},  Nonlinearity {\bf37}(8), 085008 (2024)

\bibitem{BGO3}
M. Bertola, T. Grava and G. Orsatti,  
\reftitle{$\overline{\partial}$-problem for the focusing nonlinear Schrödinger equation and soliton shielding}
Bertola, Marco; Grava, Tamara; Orsatti, Giuseppe
Proc. A {\bf481}(2310), 20240764 (2025)

\bibitem{BG14}
G. Biondini and G. Kovačič,
 \reftitle{Inverse scattering transform for the focusing nonlinear Schrödinger equation with nonzero boundary conditions},
J. Math. Phys. {\bf55},  031506 (2014)

\bibitem {BM17} 
G. Biondini and D. Mantzavinos, 
\reftitle{Long-time asymptotics for the focusing nonlinear Schrödinger equation with nonzero boundary conditions at infinity and asymptotic stage of modulation instability}, 
Commun. Pure Appl. Math. {\bf{70}}, 2300--2365 (2017)

\bibitem{BJM18}
M. Borghese, R. Jenkins and K. D. T. R. McLaughlin,
\reftitle{Long time asymptotic behavior of the focusing nonlinear Schr\"{o}dinger equation},
Ann. Inst. H. Poincar\'{e} C Anal. Non Linéaire {\bf 35}, 887--920(2018) 



\bibitem{Deift11}
P. Deift and J. Park,
\reftitle{Long-time asymptotics for solutions of the NLS equation with a delta potential and even initial data}, Int. Math. Res. Not. {\textbf{24}}, 5505--5624 (2011) 



\bibitem{DZ03}
P. Deift and X. Zhou,
\reftitle{Long-time asymptotics for solutions of the NLS equation with initial data in a weighted Sobolev space}, Comm. Pure. Appl. Math. {\textbf{56}}, 1029--1077 (2003) 


\bibitem{dlmf}
\reftitle{NIST Digital Library of Mathematical Functions}, Release 1.2.6 of 2026-03-15. 
F.~W.~J. Olver, A.~B. {Olde Daalhuis}, D.~W. Lozier, B.~I. Schneider, R.~F. Boisvert, C.~W. Clark, B.~R. Miller, B.~V. Saunders, H.~S. Cohl, and M.~A. McClain, eds.


\bibitem{DPVV14}
V. Demontis, B. Prinari, C. Van Der Mee, and F. Vitale,
\reftitle{The inverse scattering transform for the focusing nonlinear Schrödinger equation with asymmetric boundary conditions},
J. Math. Phys. {\bf55}, 101505  (2014)



\bibitem{Deconinck}
B. Deconinck and B. L. Segal,
\reftitle{The stability spectrum for elliptic solutions to the focusing NLS equation},
 Phys. D  {\bf 346}, 1--19 (2017) 
 



\bibitem{DZZ} 
S. Dyachenko, D. Zakharov, and V. Zakharov,
\reftitle{Primitive potentials and bounded solutions of the KdV equation}, Phys. D {\textbf{333}}, 148--156 (2016)


\bibitem{Egorova+24}
I. Egorova, M. Piorkowski, and G. Teschl, 
\reftitle{Asymptotics of KdV shock waves via the Riemann--Hilbert approach}, 
Indiana Univ. Math. J. \textbf{73}(2), 645--690 (2024)


\bibitem{FT}
L. D. Faddeev and L. A. Takhtajan,
\booktitle{Hamiltonian methods in the thoery of solitons}, 
Springer, Berlin, 1987.


\bibitem{GH1}
T. Gallay and M. Haragus. 
\reftitle{Orbital stability of periodic waves for the nonlinear Schrödinger
equation},
J. Dyn. Differ. Equ.  {\bf19}, 825--865 (2007)
\

 \bibitem{GGJM}
M. Girotti, T. Grava, R. Jenkins, K. D. T.-R. McLaughlin,  \reftitle{Rigorous asymptotics of a KdV soliton gas},  Comm. Math. Phys. {\bf384}(2), 733--784 (2021)

 \bibitem{GGJMM}
M. Girotti, T. Grava, R. Jenkins, K. D. T.-R. McLaughlin,  A. Minakov, \reftitle{Soliton versus the gas: Fredholm determinants, analysis, and the rapid oscillations behind the kinetic equation,} Comm. Pure Appl. Math. {\bf76}(11),  3233--3299 (2023)

 \bibitem{GJM}
	M. Girotti, R. Jenkins and K. D. T.-R. McLaughlin,  \reftitle{Long time asymptotics of the generalized soliton gas of the mKdV equation}, in preparation
	
\bibitem{GJZZ}
	T. Grava, R. Jenkins, X. Zhang and Z. Zhang, \reftitle{Direct Scattering of the Focusing Nonlinear Schrödinger Equation with Step-like Oscillatory Initial Data}, 	arXiv:2603.02855 [math.AP]
	
\bibitem{GJZZ2}
	T. Grava, R. Jenkins, X. Zhang and Z. Zhang, \reftitle{Long time asymptotic behavior of NLS with step-like oscillatory initial data}, in preparation

\bibitem{ZS72} 
	V. E. Zakharov and A. B. Shabat, 
	\reftitle{Exact theory of two-dimensional self-focusing and one-dimensional self-modulation of waves in nonlinear media}, 
	Sov. Phys. JETP {\textbf{34}}, 63--69 (1972)
	
\bibitem{Belokolos1994}
	E. D. Belokolos, A. I. Bobenko, V. Z. Enol'skii, A. R. Its and V. B. Matveev, 
	\booktitle{Algebro-geometric Approach to Nonlinear Integrable Equations. Springer Series in Nonlinear Dynamics}, Springer, Berlin, 1994
	
\bibitem{Zhou}
 X. Zhou, 
\reftitle{The Riemann-Hilbert problem and inverse scattering}, SIAM J. Math. Anal. {\bf20}(4), 966--986 (1989)
	
\end{thebibliography}
\end{document}